\documentclass[11pt, a4paper]{article}
\usepackage{}
\usepackage{amsthm}
\usepackage{mathrsfs}
\usepackage{amsmath,amssymb,latexsym,color}
\usepackage[colorlinks,
linkcolor=blue,
anchorcolor=green,
citecolor=magenta
]{hyperref}
\usepackage{graphicx}
\usepackage{tikz}
\usetikzlibrary{calc}
\usepackage{CJK}

\newcommand{\qbinom}[2]{{#1\brack #2}}

\usepackage{authblk}

\long\def\delete#1{}

\usepackage{color}

\definecolor{Blue}{rgb}{0,0,1}
\definecolor{Red}{rgb}{1,0,0}
\definecolor{DarkGreen}{rgb}{0,0.6,0}
\definecolor{DarkYellow}{rgb}{1,1,0.2}
\definecolor{DarkPurple}{rgb}{.6,0,1}

\usepackage{xcolor}
\usepackage[normalem]{ulem}

\usepackage{enumerate}
\usepackage{cleveref}
\crefformat{section}{\S#2#1#3}
\crefformat{subsection}{\S#2#1#3}
\crefformat{subsubsection}{\S#2#1#3}
\crefrangeformat{section}{\S\S#3#1#4 to~#5#2#6}
\crefmultiformat{section}{\S\S#2#1#3}{ and~#2#1#3}{, #2#1#3}{ and~#2#1#3}
\def\ma{\mathcal{A}}
\def\mb{\mathcal{B}}

\def\md{\mathcal{D}}

\def\mf{\mathcal{F}}
\def\mg{\mathcal{G}}
\def\mh{\mathcal{H}}
\def\mi{\mathcal{I}}

\def\bs{\setminus}

\def\ge{\geqslant}
\def\le{\leqslant}
\def\b{\brack}

\numberwithin{equation}{section}

\newtheorem{thm}{Theorem}[section]
\newtheorem{lem}[thm]{Lemma}

\newtheorem{ex}[thm]{Example}

\newtheorem{cl}{Claim}

\begin{document}
	
	\setcounter{page}{1}
	\renewcommand{\thefootnote}{}
	\newcommand{\remark}{\vspace{2ex}\noindent{\bf Remark.\quad}}
	\renewcommand{\abovewithdelims}[2]{%
		\genfrac{[}{]}{0pt}{}{#1}{#2}}

	
	\def\qed{\hfill$\Box$\vspace{11pt}}
	
	\title {\bf $s$-almost cross-$t$-intersecting families for vector spaces}

	\author{Dehai Liu\thanks{E-mail: \texttt{liudehai@mail.bnu.edu.cn}}\   \textsuperscript{a}}
	\author{Jinhua Wang\thanks{Corresponding author. E-mail: \texttt{202321130084@mail.bnu.edu.cn}}\   \textsuperscript{b}}
	\author{Tian Yao\thanks{ E-mail: \texttt{tyao@hist.edu.cn}}\ \textsuperscript{c}}

	\affil{ \textsuperscript{a} Laboratory of Mathematics and Complex Systems (Ministry of Education), School of
		Mathematical Sciences, Beijing Normal University, Beijing 100875, China}
	
	\affil{ \textsuperscript{b} School of
		Mathematical Sciences, Beijing Normal University, Beijing 100875, China}

	\affil{ \textsuperscript{c} School of Mathematical Sciences, Henan Institute of Science and Technology, Xinxiang 453003, China}

	\date{}
	
	\openup 0.5\jot
	\maketitle

	\begin{abstract}
		
		Let $V$ be an $n$-dimensional vector space over the finite field $\mathbb{F} _{q} $, and ${V\brack k}$ denote the family of all $k$-dimensional subspaces of $V$. The families $\mathcal{F},\mathcal{G}\subseteq {V\brack k}$ are said to be cross-$t$-intersecting if $\dim(F\cap G)\ge t$ for all $F\in \mathcal{F}, G\in \mathcal{G}$. Two families $\mathcal{F}$ and $\mathcal{G}$ are called $s$-almost cross-$t$-intersecting if each member of $\mathcal{F}$ (resp. $\mathcal{G}$) is $t$-disjoint with at most $s$ members of  $\mathcal{G}$ (resp. $\mathcal{F}$).
		In this paper, we discribe the structure of $s$-almost cross-$t$-intersecting families with maximum product of their sizes. In addition, we prove a stability result.
	
		\vspace{2mm}
		\noindent{\bf Key words:}\ cross-$t$-intersecting families;\ $s$-almost cross-$t$-intersecting families;\ vector spaces
		
		\
		
		
	\end{abstract}
	
	\section{Introduction}

	Let $n,k$ and $t$ be positive integers with $n\ge k\ge t$. Write $[n]=\left \{ 1, 2,...,n \right \}$. For a set $X$, denote the family of all $k$-subsets of $X$ by $\binom{X}{k} $ and let $\mathcal{F} $ be a family consisting of $k$-subsets of an $n$-set. 
	It is called  \textit{$t$-intersecting} if $\left| F\cap F^{\prime}\right|\geq t$ for any $F, F^{\prime}\in \mf$. 
	The famous Erd\H{o}s-Ko-Rado theorem \cite{MR0140419, MR0771733} gives the structure of maximum-sized $t$-intersecting families.  
	We refer readers to  \cite{ 2406295, 2407161, MR0519277, 2407162, MR0771733} for extensive  results on $t$-intersecting families.
	
	We say that $\mf \subseteq \binom{[n]}{k} $ is  \textit{$s$-almost $t$-intersecting} if $\left | \left \{ F\in \mathcal{F}:\left | F\cap F'  \right |<t   \right \}  \right | \le s$ for any $F^{\prime}\in\mf$. Clearly, a $t$-intersecting family is $s$-almost $t$-intersecting. 
	Gerbner et al. \cite{2406095}  proved that,  when  $n$ is sufficiently large, each member of a maximum-sized $s$-almost $1$-intersecting family contains a fixed element. 
	In \cite{2406096}, Frankl and Kupavskii characterized the structure of maximum $1$-almost $1$-intersecting families under the condition that they are not $1$-intersecting. The $s$-almost $t$-intersecting families were also investigted in \cite{finite sets}.

Two families $\mathcal{F}, \mathcal{G} \subseteq \binom{[n]}{k} $ are said to be \textit{cross $t$-intersecting} if $|F\cap G|\ge t$ for any $F\in \mathcal{F}, G\in \mathcal{G}$. There are numerous papers on studying the maximum sum or the maximum product of size of cross $t$-intersecting families under different conditions. We refer the readers to \cite{chengji1, chengji2, he1, he2, 2407162}.

One may further weaken the cross-$t$-intersecting condition locally. Two families $\mathcal{F}, \mathcal{G}\subseteq \binom{[n]}{k}$ are called $s$-almost cross-$t$-intersecting if $\left | \left \{ G\in \mathcal{G}:\left | G\cap F \right |<t   \right \}  \right | \le s$ for any $F\in \mathcal{F}$ and $\left | \left \{ F\in \mathcal{F}:\left | F\cap G \right |<t   \right \}  \right | \le s$ for any $G\in \mathcal{G}$. In \cite{jihu1}, Gerbner et al. pointed out the maximum value of $|\mathcal{F}||\mathcal{G}|$ for $t=1$. In \cite{almost cross}, the authors characterize the families $\mathcal{F}$ and $\mathcal{G}$ with the maximum $|\mathcal{F}||\mathcal{G}|$ for $t \ge  1$ and study $s$-almost cross-$t$-intersecting families which are not cross-$t$-intersecting.

	Intersection problems are  studied on some other mathematical objects, for example, vector spaces. Let $V$ be an $n$-dimensional vector space over the finite field $\mathbb{F}_{q}$. Write  the family of all
	$k$-subspaces of $V$ as ${V\b k}$. Recall that for any positive integers $a$ and $b$ the \textit{ Gaussian binomial coefficient} is defined by
	$${a\brack b}=\prod_{0\le i< b} \frac{q^{a-i}-1 }{q^{b-i}-1}. $$
	In addition, we set ${a\brack 0}=1$, and ${a\brack b}=0$ if $b$ is a negative integer. The size of ${V\brack k}$ is equal to ${n\brack k}$.
	
	For a positive integer $t$, a family $\mf\subseteq {V\b k}$ is said to be \textit{$t$-intersecting} if $\dim(F\cap F^{\prime})\geq t$ for any $F,F^{\prime}\in \mf$. The structure of $t$-intersecting families of ${V\brack k}$ with maximum size had been completely determined \cite{MR0867648, MR0382015, MR2231096} which are known as the Erd\H{o}s-Ko-Rado theorem for vector spaces.
	Some classical results were proved for vector spaces \cite{ 2406292, MR0721612, 2406294,2405263}.

	Define $\mathcal{D}_{\mathcal{F} } (F;t)  =\left\{F\in\mf: \dim(F\cap F^{\prime})<t\right\}$. As a generalization of $t$-intersecting families for vector spaces, a family $\mf\subseteq {V\b k }$ is called \textit{$s$-almost $t$-intersecting} if $\left|\mathcal{D}_{\mathcal{F} } (F;t) \right|\leq s$ for any $F^{\prime}\in\mf$.  Shan and  Zhou \cite{2406097} characterized $1$-almost $1$-intersecting families with the maximum size. Ji et al. \cite{Ji}  provides the structure of maximum-sized $s$-almost $t$-intersecting families for general $s$ and $t$.
	
We say that two families $\mathcal{F} ,\mathcal{G} \subseteq \qbinom{V}{k}$ are \textit{cross-$t$-intersecting} if $\dim ( F\cap G )\ge t $ for any $F\in  \mathcal{F}$ and $G\in \mathcal{G} $. We refer the readers to  \cite{交叉t相交向量4, 交叉t相交向量3, 交叉t相交向量2, 交叉t相交向量1} for some  results on cross-$t$-intersecting families.
	
	Similarly, the local weakening of the cross-$t$-intersecting condition can be further pursued. Write $\mathcal{D} _{\mathcal{G} } (F;t)=\left \{ G\in\mathcal{G}:\dim(G\cap F)< t  \right \} $. Two families $\mathcal{F} ,\mathcal{G} \subseteq \qbinom{V}{k}$ are called \textit{$s$-almost cross-$t$-intersecting} if $\left | \mathcal{D} _{\mathcal{G} } (F;t)\right | \le s$ for any $F\in  \mathcal{F}$ and $\left | \mathcal{D} _{\mathcal{F} } (G;t)\right | \le s$  for any $G\in \mathcal{G} $.


	\begin{thm}\label{dingli1}
		Let $n$, $k$, $t$ and $s$ be positive integers with $k\geq t+1$ and  $n\geq 2k+2t+1+\log_{q} 7s$. If $\mathcal{F},\mathcal{G}\subseteq {V\brack k}$ are $s$-almost cross-$t$-intersecting families, then 
		$$|\mathcal{F}||\mathcal{G}|\le {n-t\brack k-t}^{2}. $$  Equality holds if and only if $\mathcal{F}=\mathcal{G}=\left \{ H\in{V\brack k}: E\subseteq H  \right \} $ for some $E\in {V\brack t}$.
	\end{thm}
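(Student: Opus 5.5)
We will reduce the $s$-almost setting to the genuine cross-$t$-intersecting setting and then bound the product using the covering-number technique standard for vector spaces. We may assume $|\mathcal{F}||\mathcal{G}|\ge {n-t\brack k-t}^2$, since otherwise there is nothing to prove. Then without loss of generality $|\mathcal{F}|\ge {n-t\brack k-t}$.

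\textbf{Step 1: a large cross-$t$-intersecting core.} Form the bipartite ``$t$-disjointness graph'' between $\mathcal{F}$ and $\mathcal{G}$. Every vertex has degree at most $s$, so it has a matching-type structure, and a greedy argument gives subfamilies $\mathcal{F}'\subseteq\mathcal{F}$ and $\mathcal{G}'\subseteq\mathcal{G}$ that are cross-$t$-intersecting and satisfy
$$|\mathcal{F}'|\ge |\mathcal{F}|/(s+1)\quad\text{or}\quad |\mathcal{G}\setminus\mathcal{G}'|\le s|\mathcal{F}\setminus \mathcal{F}'|.$$
A cleaner route is as follows. Every $G\in\mathcal{G}$ is $t$-intersecting with all but $s$ members of $\mathcal{F}$. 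If the members of $\mathcal{F}$ do not all contain a common $t$-space, we bound $|\mathcal{G}|$ by counting. Take a $t$-covering set $T$ for $\mathcal{F}$, that is, a subspace meeting every $F$ in dimension at least $t$, of minimal dimension $\tau$. Every $G$ must $t$-intersect at least $|\mathcal{F}|-s$ members. We then use the standard bound: the number of $k$-spaces $t$-intersecting a given family of $k$-spaces with no common $t$-space is at most roughly ${k+1\brack 1}{n-t-1\brack k-t-1}$ times a constant (the Hilton--Milner-type estimate for vector spaces). Since $|\mathcal{F}|$ is large, we can choose many $F$'s to apply it to.

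\textbf{Step 2: show a common $t$-space.} Suppose first that $\mathcal{F}$ has no $t$-space contained in more than $|\mathcal{F}|-(s+1)$ of its members. Pick $s+1$ members $F_1,\dots,F_{s+1}$ in ``general position''. Every $G$ meets at least one $F_i$ in dimension $\ge t$, together with many others. From this we derive
$$|\mathcal{G}|\le c\,{k\brack t}^2{n-t-1\brack k-t-1}\cdot(\text{poly in }s),$$
which is much smaller than ${n-t\brack k-t}$ when $n\ge 2k+2t+1+\log_q 7s$. Here we use ${n-t\brack k-t}/{n-t-1\brack k-t-1}\approx q^{n-k}$. Together with the trivial bound $|\mathcal{F}|\le{n\brack k}$ this is not directly enough, so we instead apply the symmetric argument to both families and compare each with ${n-t\brack k-t}$. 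This yields a $t$-space $E$ with $\mathcal{F}_E=\{F\in\mathcal{F}:E\subseteq F\}$ containing all but a few members, and similarly a $t$-space $E'$ for $\mathcal{G}$. Then $E=E'$, since otherwise most pairs would be $t$-disjoint, contradicting degree $\le s$.

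\textbf{Step 3: exceptional members.} Let $\mathcal{F}_0=\mathcal{F}\setminus\mathcal{F}_E$. Each $F\in\mathcal{F}_0$ $t$-intersects at most $c{k\brack 1}{n-t-1\brack k-t-1}$ members of $\{H\supseteq E\}$. Hence it is $t$-disjoint from at least $|\mathcal{G}_E|-c{k\brack1}{n-t-1\brack k-t-1}>s$ members of $\mathcal{G}$, which forces $\mathcal{F}_0=\emptyset$. The same holds for $\mathcal{G}$. So $\mathcal{F},\mathcal{G}\subseteq\{H\supseteq E\}$, giving the bound, with equality only if both are the full star.

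The main obstacle is Step 2: making the ``no dominant $t$-space'' case quantitatively beat ${n-t\brack k-t}^2$ for the product rather than for a single family. This is where the constant $7s$ in the hypothesis on $n$ must be used. I would first try bounding $|\mathcal{G}|$ by $(s+1){k\brack t}\cdot{k\brack 1}{n-t-1\brack k-t-1}$-type estimates.
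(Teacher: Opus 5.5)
There is a genuine gap, and it is the one you flag yourself: Step~2 is never carried out, and the estimates you sketch for it cannot work under the hypothesis $n\ge 2k+2t+1+\log_q 7s$.

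Step~1 contributes nothing. The greedy dichotomy is neither proved nor used. The Hilton--Milner-type count concerns $k$-spaces that $t$-intersect \emph{every} member of a non-trivial family, whereas here each $G\in\mathcal{G}$ may be $t$-disjoint from up to $s$ members of $\mathcal{F}$.

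In your framework the only way to bound one family is to use a member of the other, a $k$-space, as a $t$-cover. Each such use costs a factor ${k\brack t}$. Even done carefully (fix $F_1\in\mathcal{F}$, and for each $A\in{F_1\brack t}$ pick $s+1$ members of $\mathcal{F}$ not containing $A$), ``no dominant $t$-space in $\mathcal{F}$'' yields only $|\mathcal{G}|\le s+(s+1){k\brack t}{k\brack 1}{n-t-1\brack k-t-1}$. The only bound your tools give on the other side is $|\mathcal{F}|\le{k\brack t}{n-t\brack k-t}+s$. To beat ${n-t\brack k-t}^2$ with these two bounds you need $q^{n-k}$ larger than roughly $s{k\brack t}^2{k\brack 1}\approx s\,q^{2t(k-t)+k}$. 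The hypothesis only gives $q^{n-k}\ge 7s\,q^{k+2t+1}$, whose exponent is linear in $k,t$, while $2t(k-t)$ is quadratic. Your stated $\mathrm{poly}(s){k\brack t}^2$ version is worse in $s$ as well.

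Moreover, comparing each family separately with ${n-t\brack k-t}$ cannot handle the mixed case where exactly one family is a near-star. Take $\mathcal{G}=\mathcal{H}_{1}(V,L;k)$ and $\mathcal{F}=\mathcal{M}(L;k,t)$ with $\dim L=t+1$ (Example~\ref{buchonglizi}). Then $|\mathcal{G}|<{n-t\brack k-t}<|\mathcal{F}|\approx{t+1\brack 1}{n-t\brack k-t}$. Only a genuine product bound excludes this regime, and it must exploit the $(t+1)$-dimensional cover $L$ of $\mathcal{G}$ rather than a $k$-dimensional member of $\mathcal{G}$.

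The missing idea is to work with minimal $t$-covers and the $t$-covering numbers of \emph{both} families simultaneously.
\begin{itemize}
\item If $\dim H<\tau_t(\mathcal{G})$, some $G_1\in\mathcal{G}$ meets $H$ in dimension $<t$. All but at most $s$ members of $\mathcal{F}_H$ meet $G_1$ in dimension $\ge t$, and hence contain a strictly larger subspace built from $H$ and $G_1$. Iterating this branching gives Lemma~\ref{yinli2.2}: $|\mathcal{F}_H|\le{k-t+1\brack 1}^{\tau_t(\mathcal{G})-\dim H}{n-\tau_t(\mathcal{G})\brack k-\tau_t(\mathcal{G})}+s\sum_{i=0}^{\tau_t(\mathcal{G})-\dim H-1}{k-t+1\brack 1}^{i}$.
\item Sum this over the ${\tau_t(\mathcal{F})\brack t}$ $t$-subspaces of a minimal $t$-cover of $\mathcal{F}$, instead of the ${k\brack t}$ $t$-subspaces of a member. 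This gives the symmetric bound $|\mathcal{F}||\mathcal{G}|\le f_{1}(n,k,t,s,\tau_t(\mathcal{F}))\,f_{1}(n,k,t,s,\tau_t(\mathcal{G}))$. Since $f_1$ decreases in its last argument, this is below ${n-t\brack k-t}^2$ unless both covering numbers equal $t$.
\item When a covering number exceeds $k$, the paper instead uses a skew Bollob\'as-type theorem for pairs of subspaces. It shows the partner family has at most $s\binom{2k-2t+2}{k-t+1}$ members (Lemma~\ref{yinli2.4}), which is what makes a single factor ${k\brack t}$ affordable there.
\item Once $(\tau_t(\mathcal{F}),\tau_t(\mathcal{G}))=(t,t)$, both families lie in stars. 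The bound is then immediate, and equality forces both to be full stars with a common centre.
\end{itemize}
Your Step~3 count (a $k$-space not containing $E$ $t$-intersects at most ${k\brack 1}{n-t-1\brack k-t-1}$ members of the star of $E$) is correct, but it is not where the difficulty lies.
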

	
In extremal combinatorics, after identifying the sets that satisfy the given conditions, one may study the stability results. Here, we consider the $s$-almost cross-$t$-intersecting families $\mathcal{F},\mathcal{G}\subseteq {V\brack k}$ with $\dim \left ( \bigcap_{H\in \mathcal{F}\cup \mathcal{G} } H   \right ) <t$.
To present our another  result,  we introduce some families and examples. Suppose that $E,X$ and $L$ are subpaces of $V$ with $E\subseteq X$. Write

    \begin{equation*}
    	\begin{aligned}
    		\mh_{1}(X,E;k)&= \left\{F\in {X\brack k}: E\subseteq F\right\},\\
    		\mh_{2}(X,E;k)&=\left\{F\in{V\b k}: F\cap X=E\right\},\\
    		\mathcal{M}(L;k,t) & =\left \{ F\in {V\brack k}:\dim(F\cap L)\ge t \right \} .
    	\end{aligned}
    \end{equation*}

	
		\begin{ex}\label{lizi} 
		Let $n$, $k$,  $t$ and $s$ be positive integers with  $q^{(k-t)(k-t+1)} \qbinom{n-k-1}{k-t}>s$, and  $X\in  \qbinom{V}{k+1}$ and $E\in  \qbinom{X}{t}$. The families 
		$$\mh_{1}(V,E;k)\setminus \mathcal{A}~~ and~~\mathcal{H} _{1} (V,E;k)\cup \mathcal{B}, $$
		where $\ma$ is a $\left ( q^{(k-t)(k-t+1)} \qbinom{n-k-1}{k-t}-s \right ) $-subset of $\mathcal{H} _{2} (V,E;k)$ and $\mb$ is a $\min\left\{s, q^{(k-t+1)} \qbinom{t}{1}\right\}$-subset of ${X\brack k}\setminus\mh_{1}(V,E;k)$, are $s$-almost cross-$t$-intersecting but not cross-$t$-intersecting.
		
	\end{ex}
	
	The product of sizes of families in Example \ref{lizi} is
	\begin{align}\label{g1}
		\left ( {n-t\brack k-t}-q^{(k-t)(k+1-t)}{n-k-1\brack k-t}+s  \right ) \left ( {n-t\brack k-t}+\min\left\{s, q^{(k-t+1)} \qbinom{t}{1}\right\} \right ) =:g_{1}(n,k,t,s) .
	\end{align}
	
		\begin{ex}\label{buchonglizi} \cite{交叉t相交向量4}
		Let $n$, $k$,  $t$ and $s$ be positive integers with  $n\ge k+1\ge t+2$, and  $L\in  \qbinom{V}{t+1}$. The families 
		$$\mh_{1}(V,L;k)\\ ~ and ~ \\ \mathcal{M} (L;k,t) $$
		are $s$-almost cross-$t$-intersecting and the size of the intersection of all members in the union of $\mh_{1}(V,L;k)$ and $\mathcal{M}  _{1} (L;k,t)$ is less than $t$.
		
	\end{ex}
	
	The product of sizes of families in Example \ref{buchonglizi} is 
	$$q^{k-t}{t+1\brack 1}{n-t-1\brack k-t-1}{n-t-1\brack k-t} +{n-t-1\brack k-t-1}^{2} =:g_{2}(n,k,t).$$
	
	\begin{thm}\label{dingli5.2}
		Let $n$, $k$,  $t$ and $s$ be positive integers with $(k,t)\ne (4,2)$, $k\ge t+2$ and $n\geq 3k+3t+1+\log_{q} 13s$. Suppose that $\mathcal{F},\mathcal{G}\subseteq  \qbinom{V}{k}$ are $s$-almost cross-$t$-intersecting with $\dim\left (  {\textstyle \bigcap_{H\in \mathcal{F}\cup \mathcal{G}  }} H \right ) <t$. If $|\mathcal{F} ||\mathcal{G} |$ has the maximum value and $\left | \mathcal{F}  \right | \le \left | \mathcal{G}  \right | $, then one of the following hold.
		\begin{enumerate}[\normalfont(i)]
			\item If $k\le 2t $, then
			$|\mathcal{F} ||\mathcal{G} |\le g_{2}(n,k,t),$
			and the equality holds only if there exists $L\in  \qbinom{V}{t+1}$ such that $\mathcal{F} =\mh_{1}(V,L;k)$ and $\mathcal{G} ={\mathcal{M}} _{1} (L;k,t)$.
			\item If $k\ge 2t+1$, then
			$|\mathcal{F} ||\mathcal{G} |\le g_{1}(n,k,t,s),$
			and equality holds only if there exist $X\in  \qbinom{V}{k+1}$ and $E\in  \qbinom{X}{t}$ such that  $$\mf=\mh_{1}(V,E;k)\setminus \mathcal{A}~~and ~~ \mathcal{G}  =\mathcal{H} _{1} (V,E;k)\cup \mathcal{B}, $$
		\end{enumerate}
		where $\mathcal{A}\subseteq \mathcal{H} _{2} (X,E;k)$  and $\mb \subseteq {X\brack k}\setminus\mh_{1}(X,E;k)$ with $\left | \mathcal{A}  \right |= q^{(k-t)(k-t+1)} \qbinom{n-k-1}{k-t}-s $ and $\left | \mathcal{B}  \right |= \min\left\{s, q^{(k-t+1)} \qbinom{t}{1}\right\}$.
		
	\end{thm}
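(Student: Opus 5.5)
The plan is to reduce Theorem \ref{dingli5.2} to known stability results for (non-trivial) cross-$t$-intersecting families, via a ``kernel'' argument. Since $|\mathcal{F}||\mathcal{G}|$ is maximum and the examples give lower bounds, we have $|\mathcal{F}||\mathcal{G}|\ge \max\{g_1(n,k,t,s),g_2(n,k,t)\}$. Both quantities are of order ${n-t\brack k-t}^2$ up to a factor $(1-O(q^{-(n-2k)}))$. Using $|\mathcal{F}|\le|\mathcal{G}|$ and the trivial bound $|\mathcal{G}|\le {n\brack k}$, we get $|\mathcal{F}|\ge c\,{n-t\brack k-t}$ for a suitable constant $c$.

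\textbf{Step 1: a kernel for $\mathcal{F}$.} Consider the graph of $t$-disjoint pairs between $\mathcal{F}$ and $\mathcal{G}$, which has maximum degree at most $s$. I would argue that $\mathcal{F}$ contains a subfamily $\mathcal{F}'$ with $|\mathcal{F}\setminus\mathcal{F}'|$ small such that $\mathcal{F}'$ and $\mathcal{G}'$ are genuinely cross-$t$-intersecting, where $\mathcal{G}'$ is $\mathcal{G}$ minus the $t$-disjoint neighbours of $\mathcal{F}\setminus \mathcal{F}'$. The cleanest route mirrors the proof of Theorem \ref{dingli1}.

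\begin{enumerate}[\normalfont(a)]
\item Pick $t$-covering numbers: let $\tau_t(\mathcal{G})$ be the minimum dimension of a subspace meeting every member of $\mathcal{G}$ in dimension at least $t$.
\item If $\tau_t(\mathcal{G})\ge t+1$ (and similarly for $\mathcal{F}$), the standard counting bound gives that every $F$ is $t$-intersecting with at most about ${k+1\brack 1}^{?}{n-t-1\brack k-t-1}$-type many members of any nontrivial structure.
\item This yields $|\mathcal{F}||\mathcal{G}|$ of order ${n-t\brack k-t}{n-t-1\brack k-t-1}$ times a constant, which is far below $g_1$ and $g_2$ once $n\ge 3k+3t+1+\log_q 13s$.
\end{enumerate}

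Hence both families must be ``almost'' stars. Concretely, there is $E\in{V\brack t}$ such that all but few members of $\mathcal{F}\cup\mathcal{G}$ contain $E$; the case where $\mathcal{F}$ and $\mathcal{G}$ have different centres is ruled out by counting. The hypothesis $\dim(\bigcap H)<t$ forces some member to miss $E$.

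\textbf{Step 2: structure around $E$.} Write $\mathcal{F}=\mathcal{F}_E\cup\mathcal{F}_0$ and $\mathcal{G}=\mathcal{G}_E\cup\mathcal{G}_0$, where the subscript $0$ denotes members not containing $E$.

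\emph{Case $\mathcal{F}_0\ne\emptyset\ne\mathcal{G}_0$.} If both are non-empty, each $G\in\mathcal{G}_0$ is $t$-intersecting with at most $O({k\brack 1}{n-t-1\brack k-t-1})$ members of the full star. So $\mathcal{F}_E$ misses many members of the star, and symmetrically for $\mathcal{G}_E$. This loses too much compared with $g_1,g_2$.

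\emph{Case $\mathcal{F}_0=\emptyset$.} We may assume exactly one side, say $\mathcal{F}_0=\emptyset$ (given $|\mathcal{F}|\le|\mathcal{G}|$, this is the side that gets cut). Then $\mathcal{F}\subseteq\mh_1(V,E;k)$ and $\mathcal{G}_0\neq\emptyset$. For each $B\in\mathcal{G}_0$, $\mathcal{F}$ may contain at most $s$ members of $\mh_1(V,E;k)$ that are $t$-disjoint from $B$. This gives the optimisation problem of choosing $\mathcal{G}_0$ to minimise the union of the forbidden parts of the star.

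\begin{enumerate}[\normalfont(a)]
\item If $\mathcal{G}_0$ is small and all its members lie in one $(k+1)$-space $X\supseteq E$, the forbidden set is $\mh_2(X,E;k)$, of size $q^{(k-t)(k-t+1)}{n-k-1\brack k-t}$. At most $s$ of these may be kept, and $|\mathcal{G}_0|\le\min\{s,q^{k-t+1}{t\brack 1}\}$ since each member of $\mathcal{F}$ has at most $s$ bad partners. This gives $g_1$.
\item Alternatively, $\mathcal{G}_0$ may be large, when $\mathcal{F}$ is forced into $\mh_1(V,L;k)$ for some $L\in{V\brack t+1}$ containing $E$. Then $\mathcal{G}\subseteq\mathcal{M}(L;k,t)$ up to $s$-exceptions, which gives $g_2$.
\end{enumerate}

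\textbf{Step 3: comparison.} Compare $g_1$ and $g_2$: $g_1-{n-t\brack k-t}^2\approx -q^{(k-t)(k-t+1)}{n-k-1\brack k-t}{n-t\brack k-t}$, while $g_2-{n-t\brack k-t}^2\approx -q^{?}\,{n-t-1\brack k-t}^2\cdot(\ldots)$. Which loss is smaller is decided by exponents, giving the threshold $k\le 2t$ versus $k\ge 2t+1$. The exceptional case $(k,t)=(4,2)$ is where the two are nearly tied.

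\textbf{Main obstacle.} The main obstacle is Step 2(b): proving that any intermediate configuration, where $\mathcal{G}_0$ is neither contained in a single $X$ nor large enough to force an $L$, loses more than both extremal ones. I would handle it by picking $B_1\in\mathcal{G}_0$ and a second $B_2\in\mathcal{G}_0$ with $B_1+B_2+E$ of dimension at least $k+2$. I would then lower-bound the number of members of $\mh_1(V,E;k)$ that are $t$-disjoint from $B_1$ or $B_2$ using inclusion–exclusion with the standard counts of $k$-spaces through $E$ meeting a fixed space in a given dimension. The $13s$ in the bound on $n$ is what absorbs the $s$ retained exceptions in this estimate.
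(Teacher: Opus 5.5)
There is a genuine gap, and it starts with a mis-scaling of the target values.

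\textbf{The scale of $g_1$ and $g_2$.} Neither $g_1$ nor $g_2$ is ${\qbinom{n-t}{k-t}}^{2}(1-O(q^{-(n-2k)}))$. Up to $s$, the first factor of $g_1$ counts the $k$-spaces through $E$ that meet the $(k+1)$-space $X$ in dimension at least $t+1$. By (\ref{6.6}) this is at most $\qbinom{k-t+1}{1}\qbinom{n-t-1}{k-t-1}$. Together with Lemma \ref{yinli6.3}(i) this gives $g_1\approx\qbinom{k-t+1}{1}\qbinom{n-t-1}{k-t-1}\qbinom{n-t}{k-t}$, only about a $q^{-(n-2k+t)}$ fraction of ${\qbinom{n-t}{k-t}}^{2}$. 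Likewise $g_2\approx\qbinom{t+1}{1}\qbinom{n-t-1}{k-t-1}\qbinom{n-t}{k-t}$. In Example \ref{lizi} the deleted family $\ma$ is almost the entire star; your own Step 3 formula says this. Three consequences follow.
\begin{enumerate}[\normalfont(a)]
\item Your bound $|\mf|\ge c\qbinom{n-t}{k-t}$ is false. In both extremal examples $|\mf|=O(\qbinom{n-t-1}{k-t-1})$, and dividing by $\qbinom{n}{k}$ could not give it anyway.
\item Step 1(c) fails. A bound of order $\qbinom{n-t}{k-t}\qbinom{n-t-1}{k-t-1}$ is precisely the order of $g_1$ and $g_2$, so only explicit constants can decide. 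Such a computation (the paper's Lemma \ref{yinli3.2}, with threshold $\frac{25}{26}$) only eliminates covering-number pairs outside $\{(t,t),(t,t+1),(t+1,t)\}$. The pairs $(t,t+1)$ and $(t+1,t)$ survive, and they contain both extremal configurations.
\item The conclusion that all but few members of $\mf\cup\mg$ contain one $E$ is false in case (i). $\mathcal{M}(L;k,t)$ is a union of $\qbinom{t+1}{1}$ stars, and no $t$-space lies in all but few of its members. This also contradicts your Step 2(b), so the Step 1/Step 2 framework is internally inconsistent.
\end{enumerate}

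\textbf{The missing dichotomy.} What drives the actual proof is a split: either $\mf,\mg$ are cross-$t$-intersecting or they are not.

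If they are, the pair is a non-trivial cross-$t$-intersecting pair, and the paper invokes the known theorem for such pairs ([Theorem 1.2] of the reference behind Example \ref{buchonglizi}). That theorem gives $g_2$ for $k\le 2t$. For $k\ge 2t+1$ it gives the Hilton--Milner-type product $\bigl(\qbinom{n-t}{k-t}-q^{(k-t)(k+1-t)}\qbinom{n-k-1}{k-t}\bigr)\bigl(\qbinom{n-t}{k-t}+q^{k-t+1}\qbinom{t}{1}\bigr)$. Your Step 2(b), which you call the main obstacle, amounts to reproving that theorem and is only asserted.

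If they are not, Theorem \ref{dingli2} applies, in three steps.
\begin{enumerate}[\normalfont(a)]
\item Lemmas \ref{yinli3.2} and \ref{yinli2.6}, together with $|\mf|\le|\mg|$, force $(\tau_t(\mf),\tau_t(\mg))=(t,t+1)$.
\item Claim \ref{yinli3.4} and its proof give $|\mg\setminus\mg_E|=O(\qbinom{n-t-1}{k-t-1})$. So a large $\mg_0$ as in your Step 2(b) never occurs in this case.
\item The key Claim \ref{duanyan3.6} shows that $E+G$ is one fixed $(k+1)$-space for every $G\in\mg\setminus\mg_E$. Otherwise $|\mf|\le f_2(n,k,t,s,k)$, and combined with the bound on $|\mg|$ the product drops below $g_1$ (Lemma \ref{yinli6.4}(ii)). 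Your $B_1,B_2$ inclusion--exclusion idea is close to this step, but it needs that companion bound on $|\mg|$.
\end{enumerate}

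\textbf{Step 3.} With its ``?'' placeholders, Step 3 is not a comparison. At $k=2t$ one has $\qbinom{k-t+1}{1}=\qbinom{t+1}{1}$, so $g_1$ and $g_2$ agree to leading order. The second-order estimates of Lemma \ref{6.5}(ii) decide, and this is where $(k,t)=(4,2)$ has to be excluded. For $k\ge 2t+1$ you need Lemma \ref{6.5}(i) to show that the optimum is not cross-$t$-intersecting at all.
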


This paper proceeds with the following organization. Some inequalities needed in this paper are proved in Section \ref{s4}. Section \ref{section2} is dedicated to the proof of Theorem \ref{dingli1}. Theorem \ref{dingli5.2} are established in Section \ref{section3}.


		\section{Inequalities concerning Gaussian binomial coefficients}\label{s4}
	
	In this section, we begin with two lemmas which contain some inequalities about the Gaussian binomial coefficient and a formula for counting the number of some special subspaces. Then we present the proofs of the relevant inequality lemmas employed in this paper.

	\begin{lem}\label{yinli6.1}
		Let $n$ and $k$ be positive integers with $n>k$. Then the following hold.
		\begin{enumerate}[\normalfont(i)]
			\item ${k\b 1}\leq 2q^{k-1}$.
			\item $q^{n-k}<\frac{q^{n}-1}{q^{k}-1}<q^{n-k+1}$.
			\item  $q^{k(n-k)}<{n\brack k}<q^{k(n-k+1)}$.	
			\item  ${n\brack k}={n-1\brack k-1}+q^{k}{n-1\brack k} $.
			\item  Let $t$ be a positive integer. If  $k\geq t$ and $n\geq 2k-t+1$, then
			\begin{equation*}
				{k-t+1\brack 1}^{j-i}{n-j\brack k-j}\leq{n-i\brack k-i}
			\end{equation*}
			for any positive integers $i$ and $j$ with $i\leq j$.
		\end{enumerate}
	\end{lem}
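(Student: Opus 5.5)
Each of the five parts follows from the product formula ${n\brack k}=\prod_{0\le i<k}\frac{q^{n-i}-1}{q^{k-i}-1}$ together with elementary estimates on ratios $\frac{q^a-1}{q^b-1}$. We prove them in the order (i), (ii), (iii), (iv), (v), since (iii) uses (ii) and (v) uses (ii)–(iii)-type bounds.

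For (i), write ${k\brack 1}=1+q+\cdots+q^{k-1}=q^{k-1}\bigl(1+q^{-1}+\cdots+q^{-(k-1)}\bigr)$. The bracket is less than $\frac{1}{1-q^{-1}}\le 2$ because $q\ge 2$.

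For (ii), the lower bound is $q^{n-k}(q^k-1)=q^n-q^{n-k}<q^n-1$, since $n>k$. The upper bound is $q^n-1<q^{n+1}-q^{n-k+1}=q^{n-k+1}(q^k-1)$, which holds because $q^{n-k+1}<q^{n+1}-q^{n}+1$, i.e.\ $q^{n}(q-1)>q^{n-k+1}-1$.

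For (iii), apply (ii) to each factor $\frac{q^{n-i}-1}{q^{k-i}-1}$ with $0\le i<k$. Since $n-i>k-i$, each factor lies strictly between $q^{n-k}$ and $q^{n-k+1}$. Multiplying the $k$ factors gives $q^{k(n-k)}<{n\brack k}<q^{k(n-k+1)}$.

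For (iv), this is the standard $q$-Pascal identity. Combinatorially, fix a hyperplane $W\in{V\brack n-1}$. A $k$-subspace $F$ either lies in $W$, giving ${n-1\brack k}$ choices, or meets $W$ in a $(k-1)$-space $F\cap W$. In the second case there are ${n-1\brack k-1}$ choices for $F\cap W$, and for each such $K$ there are $\frac{q^n-q^{n-1}}{q^k-q^{k-1}}=q^{n-k}$ extensions $F$. This yields
\[
{n\brack k}={n-1\brack k}+q^{n-k}{n-1\brack k-1}.
\]
Applying the same identity with $k$ replaced by $n-k$ and using ${n\brack k}={n\brack n-k}$ gives the stated form ${n\brack k}={n-1\brack k-1}+q^k{n-1\brack k}$. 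Alternatively, the identity can be checked directly by algebraic manipulation of the product formula.

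For (v), induct on $j-i$. The case $j=i$ is trivial. It therefore suffices to prove the one-step inequality
\[
{k-t+1\brack 1}{n-j\brack k-j}\le {n-j+1\brack k-j+1}\qquad\text{for } j\ge 1,
\]
after which we chain the steps from $j$ down to $i$. If $j>k$, both sides vanish or the left side is $0$, so nothing needs proving; assume $j\le k$. Then
\[
{n-j+1\brack k-j+1}\Big/{n-j\brack k-j}=\frac{q^{n-j+1}-1}{q^{k-j+1}-1}.
\]
So we need $\frac{q^{k-t+1}-1}{q-1}\le \frac{q^{n-j+1}-1}{q^{k-j+1}-1}$. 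By (ii) the right side exceeds $q^{n-k}$ (for $j\le k$, $n>k$), and the left side is less than $q^{k-t+1}$. The hypothesis $n\ge 2k-t+1$ gives $n-k\ge k-t+1$, which settles it.

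The main obstacle is only bookkeeping in (v): the boundary case $j=k$, where the ratio is $\frac{q^{n-k+1}-1}{q-1}$ and the bound is even easier, and making sure the induction steps from $j$ down to $i$ all stay in the range $j'\le k$. Steps with $j'>k$ are trivial because the left-hand coefficient ${n-j'\brack k-j'}$ is $0$.
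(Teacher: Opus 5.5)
Your proof is correct. The paper states this lemma without proof, treating (i)--(v) as standard facts about Gaussian binomial coefficients, so there is no argument in the paper to compare yours against. Your write-up supplies the routine verifications, and each step checks out:
\begin{itemize}
\item (i): the geometric-series bound.
\item (ii): the two one-line inequalities. The last of them holds because $q^n(q-1)\ge q^n\ge q^{n-k+1}$, using $k\ge 1$.
\item (iii): the factorwise application of (ii).
\item (iv): the hyperplane count followed by the symmetry ${n\brack k}={n\brack n-k}$.
\item (v): the reduction to a one-step inequality via ${n-j+1\brack k-j+1}=\frac{q^{n-j+1}-1}{q^{k-j+1}-1}{n-j\brack k-j}$. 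There the bound ${k-t+1\brack 1}<q^{k-t+1}\le q^{n-k}$ uses exactly the hypothesis $n\ge 2k-t+1$. The steps with $j>k$ are trivial because ${n-j\brack k-j}=0$.
\end{itemize}
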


		\begin{lem}\textnormal{(\cite[Lemma 2.1]{2403213})}\label{bu}
		Let $n$ and $k$ be non-negative integers with $n\ge k$. If $q\ge 2$, then
		$${n\brack k}\le \frac{7}{2} q^{k(n-k)} .$$		
			\end{lem}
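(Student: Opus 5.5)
The statement just before this point is Lemma \ref{bu}, which is cited from \cite{2403213}. We could simply invoke that reference, but here is how a self-contained proof would go. Write
$${n\brack k}=\prod_{0\le i<k}\frac{q^{n-i}-1}{q^{k-i}-1}.$$
We compare each factor with $q^{n-k}$, which is the natural scale since $\frac{q^{n-i}}{q^{k-i}}=q^{n-k}$. For $0\le i<k$ put $j=k-i\in\{1,\dots,k\}$. Because $n\ge k$, the factor satisfies
$$\frac{q^{n-i}-1}{q^{j}-1}\le \frac{q^{n-i}}{q^{j}-1}=q^{n-k}\cdot\frac{q^{j}}{q^{j}-1}=q^{n-k}\left(1-q^{-j}\right)^{-1}.$$
Taking the product over $j=1,\dots,k$ gives
$${n\brack k}\le q^{k(n-k)}\prod_{j=1}^{k}\left(1-q^{-j}\right)^{-1}\le q^{k(n-k)}\prod_{j\ge 1}\left(1-q^{-j}\right)^{-1}.$$
The cases $k=0$ and $n=k$ are trivial, since then the left side equals $1$.

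The proof therefore reduces to a uniform bound on the infinite product $P(q)=\prod_{j\ge1}(1-q^{-j})^{-1}$. The product decreases as $q$ grows, so it suffices to treat $q=2$. This is the only non-routine step. My plan is to keep the first few factors exact and bound the tail crudely. The first three factors at $q=2$ give
$$\prod_{j=1}^{3}\left(1-2^{-j}\right)^{-1}=2\cdot\frac43\cdot\frac87=\frac{64}{21}.$$
For the tail, use $-\ln(1-x)\le x/(1-x)\le \frac{16}{15}x$ for $x\le 1/16$. This gives
$$\sum_{j\ge4}-\ln\left(1-2^{-j}\right)\le\frac{16}{15}\cdot\frac18=\frac{2}{15},$$
so the tail product is at most $e^{2/15}<1.1427$. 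Then $P(2)\le \frac{64}{21}\cdot 1.1427<3.483<\frac72$. If this margin turned out too tight, I would keep one or two more factors exact before bounding the tail.

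Since $P(q)\le P(2)$ for all $q\ge2$, we conclude ${n\brack k}\le \frac72 q^{k(n-k)}$, as claimed.
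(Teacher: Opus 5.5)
Your proof is correct and complete. The factor-wise bound
\[
\frac{q^{n-i}-1}{q^{j}-1}\le q^{n-k}\left(1-q^{-j}\right)^{-1}
\]
is valid, and the reduction to $P(q)=\prod_{j\ge 1}(1-q^{-j})^{-1}$ works. So does the monotonicity in $q$, since each factor is decreasing in $q$. Your certified estimate at $q=2$ also holds: the first three factors give $64/21$, the tail is at most $e^{2/15}<1.1427$, and the product is below $3.483$. The hedge about keeping more factors exact is unnecessary.

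The paper gives no proof of this lemma; it simply imports it from the cited reference. Your argument is the standard one behind bounds of this kind. It makes the statement self-contained and handles the numerics rigorously rather than quoting a decimal value. It also shows the constant can be taken as $\prod_{j\ge1}(1-2^{-j})^{-1}\approx 3.46$.
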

Let $W$ be an $(e+l)$-dimensional vector space over $\mathbb{F} _{q} $, where $l,e\ge 1$, and let $L$ be a fixed $l$-subspace of $W$. We say that an $m$-subspace $U$ is type $(m,h)$ if $\dim(U\cap L)=h$. Define $\mathcal{M} (m,h;e+l,e)$ to be the set of all subspaces of $W$ with type $(m,h)$. Let $N'(m_{1},h_{1};m,h;e+l,e)$ to be the number of subspaces of $w$ with type $(m,h)$ containing a given subspace with type $(m_{1},h_{1})$.
	\begin{lem}\textnormal{(\cite{04162})}\label{jishu}
Let $0\le h_{1} \le h\le l$ and $0\le m_{1}-h_{1} \le m-h\le e$. Then $N'(m_{1},h_{1};m,h;e+l,e)\ne 0$. Moreover, if $N'(m_{1},h_{1};m,h;e+l,e)\ne 0$, then
$$N'(m_{1},h_{1};m,h;e+l,e)=q^{(l-h)(m-h-m_{1}+h_{1})} {e-(m_{1}-h_{1})\brack m-h-(m_{1}-h_{1})}{l-h_{1}\brack h-h_{1}}.$$ 
Observe that
$$|\mathcal{M} (m,h;e+l,e)|=N'(0,0;m,h;e+l,e)=q^{(l-h)(m-h)} {e\brack m-h}{l\brack h}.$$

	\end{lem}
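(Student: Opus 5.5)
The plan is a direct double count. First choose the intersection $U\cap L$. Then count the extensions by passing to quotient spaces, where the condition becomes "meets a fixed subspace trivially", a standard count.

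Fix a subspace $U_1$ of type $(m_1,h_1)$ and put $H_1=U_1\cap L$, so $\dim H_1=h_1$. Every $m$-subspace $U\supseteq U_1$ of type $(m,h)$ determines $H:=U\cap L$. This $H$ is an $h$-subspace of $L$ containing $H_1$, because $H\cap U_1\supseteq H_1$. So I will first choose $H$, in ${l-h_1\brack h-h_1}$ ways, and then count the $U$ with $U\cap L=H$ exactly. Given $H$, set $W_0=U_1+H$.
\begin{itemize}
\item By the modular law, $W_0\cap L=H+(U_1\cap L)=H$.
\item Hence $\dim W_0=m_1+h-h_1$.
\item Any admissible $U$ contains $W_0$.
\end{itemize}
Conversely, an $m$-subspace $U\supseteq W_0$ satisfies $U\cap L=H$ if and only if $U\cap(L+W_0)=W_0$. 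Indeed, since $W_0\subseteq U$, the modular law gives $U\cap(L+W_0)=W_0+(U\cap L)$, which equals $W_0$ if and only if $U\cap L\subseteq W_0\cap L=H$.

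Next I pass to the quotient $\overline{W}=W/W_0$. It has dimension $e+l-(m_1+h-h_1)$. The image $\overline{L}=(L+W_0)/W_0$ has dimension $l-\dim(L\cap W_0)=l-h$. Admissible $U$ correspond bijectively to subspaces $\overline{U}$ of $\overline{W}$ with $\dim\overline{U}=d:=m-h-(m_1-h_1)$ and $\overline{U}\cap\overline{L}=0$.

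The standard count I will use: in an $N$-dimensional space with a fixed $a$-subspace $A$, the number of $d$-subspaces meeting $A$ trivially is $q^{ad}{N-a\brack d}$. To see this, choose a complement $C$ of $A$. Each such subspace is the graph of a unique linear map from its projection onto $C$ into $A$. That gives ${N-a\brack d}$ choices of projection times $q^{ad}$ choices of linear map. Applying this with $a=l-h$ and $N-a=e+l-m_1-h+h_1-(l-h)=e-(m_1-h_1)$ yields
$$q^{(l-h)(m-h-m_1+h_1)}{e-(m_1-h_1)\brack m-h-(m_1-h_1)}.$$
Multiplying by the ${l-h_1\brack h-h_1}$ choices of $H$ gives the stated formula.

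The nonvanishing claim follows from the hypotheses. The condition $h_1\le h\le l$ makes ${l-h_1\brack h-h_1}\ge1$. The condition $0\le m_1-h_1\le m-h\le e$ gives $0\le d\le e-(m_1-h_1)$, so the second Gaussian coefficient is also positive. The formula for $|\mathcal{M}(m,h;e+l,e)|$ is the special case $U_1=0$, i.e. $m_1=h_1=0$.

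The only delicate step is the modular-law bookkeeping: checking that $\dim\overline{L}=l-h$, and that the condition $U\cap L=H$ translates exactly into $\overline{U}\cap\overline{L}=0$. I would verify both carefully. The remaining steps are routine.
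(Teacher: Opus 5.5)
Your proof is correct. The paper gives no proof of this lemma; it quotes it from the cited reference. There, counts of this kind are typically obtained by putting subspaces into matrix normal form relative to the fixed subspace $L$, using the transitivity of the singular general linear group on subspaces of a given type, and then counting the admissible matrix blocks.

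Your route is coordinate-free and proceeds in three steps:
\begin{enumerate}
\item You fix $H=U\cap L$, which accounts for the factor ${l-h_1\brack h-h_1}$.
\item You use the modular law to turn the remaining condition into $U\cap(L+W_0)=W_0$ with $W_0=U_1+H$.
\item You pass to $W/W_0$, where the condition becomes the classical count $q^{ad}{N-a\brack d}$ of $d$-subspaces meeting a fixed $a$-subspace trivially, proved by viewing them as graphs of linear maps.
\end{enumerate}

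This makes the origin of each factor transparent. The same decomposition counts $N'$ for arbitrary parameters: the number of choices of $H$ or of $\overline{U}$ is simply $0$ when a parameter is out of range. So your argument also gives the converse of the nonvanishing claim, i.e. the full \emph{if and only if} version found in the literature. The matrix approach is more mechanical, but it sits naturally inside the group-theoretic framework of singular linear spaces, where transitivity on types is used repeatedly.

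One small bookkeeping remark for the write-up: state explicitly that $\dim W_0=m_1+h-h_1$ comes from $U_1\cap H=U_1\cap L\cap H=H_1$.
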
	
	
For positive integers $n,k,t,s$ and $x$, set 
\begin{align}\label{f1}
	f_{1}(n,k,t,s,x)={x\brack t} {k-t+1\brack 1} ^{x-t}{n-x\brack k-x} +s{x\brack t} \sum_{i=0}^{x-t-1} {k-t+1\brack 1}^{i}.
\end{align}	
	
	\begin{lem}\label{yinli6.2}
		Let $n$, $k$, $t$ and $l$ be positive integers with $k\ge t+1$ and $n\geq 2k+2t+1+\log_{q} 7sl$. Then the following hold.
		\begin{enumerate}[\normalfont(i)]
			\item The function $f_{1} (n,k,t,s,x)$ is decreasing as $x\in \left \{ t,t+1,...,k-1 \right \}$ increases.
			\item $f_{1} (n,k,t,s,k-1)>\frac{7l}{6}  s\qbinom{k}{t}\binom{2k-2t+2}{k-t+1} $.	
		\end{enumerate}
	\end{lem}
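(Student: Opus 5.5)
For part (i), I will compare consecutive terms. Write $a=\qbinom{k-t+1}{1}$ and set $f_1(x)=A(x)+B(x)$, where
$$A(x)=\qbinom{x}{t}a^{x-t}\qbinom{n-x}{k-x},\qquad B(x)=s\qbinom{x}{t}\sum_{i=0}^{x-t-1}a^{i}.$$
The goal is $f_1(x)-f_1(x+1)>0$ for $t\le x\le k-2$. For the main term, the ratio is
$$\frac{A(x+1)}{A(x)}=\frac{\qbinom{x+1}{t}}{\qbinom{x}{t}}\cdot a\cdot\frac{\qbinom{n-x-1}{k-x-1}}{\qbinom{n-x}{k-x}}=\frac{q^{x+1}-1}{q^{x+1-t}-1}\cdot a\cdot\frac{q^{k-x}-1}{q^{n-x}-1}.$$
By Lemma \ref{yinli6.1}(i),(ii) this ratio is at most about $q^{t+1}\cdot 2q^{k-t}\cdot q^{k-n+1}$, which is of order $q^{2k+2-n}$. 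Since $n\ge 2k+2t+1$, the ratio is tiny, say $\le 1/q^{2t}$. Hence $A(x)-A(x+1)\ge (1-q^{-2t})A(x)\ge \tfrac12 A(x)$.

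For the error term, I need $B(x+1)-B(x)<\tfrac12 A(x)$. Crudely,
$$B(x+1)\le s\qbinom{x+1}{t}\,x\,a^{x-t}\le s\qbinom{k}{t}\,k\,a^{k-t}.$$
On the other side, $A(x)\ge a^{x-t}\qbinom{n-x}{k-x}\ge q^{(k-x)(n-k)}$ by Lemma \ref{yinli6.1}(iii), and $k-x\ge 2$. Lemma \ref{bu} or Lemma \ref{yinli6.1}(iii) bounds $\qbinom{k}{t}$ by $\tfrac72 q^{t(k-t)}$, and $a^{k-t}\le (2q^{k-t})^{k-t}$. So it suffices to show
$$q^{2(n-k)}\;\gg\; 7s\cdot k\cdot 2^{k}\,q^{t(k-t)+(k-t)^2}.$$
Using $n-k\ge k+2t+1+\log_q 7s$, we get $q^{2(n-k)}\ge (7s)^2q^{2k+4t+2}$. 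This handles the $s$ factor with room to spare. The exponent comparison $2k+4t+2$ versus $k(k-t)$ is \emph{not} automatically favorable, so I need more care here. Rather than the crude bound, I will compare $B(x+1)$ with $A(x)$ using the same factors. Both contain $\qbinom{x}{t}a^{x-t}$ up to the ratio $\qbinom{x+1}{t}/\qbinom{x}{t}\le 2q^t$. So the comparison reduces to
$$s\cdot 2q^{t}\cdot \frac{a}{a-1}\ \ll\ \qbinom{n-x}{k-x}\ge q^{2(n-k)}.$$
This follows easily from $n\ge 2k+2t+1+\log_q 7s$. This telescoped comparison is the key step and the main obstacle: the sum must be bounded geometrically, $\sum_{i<m}a^i\le \tfrac{a}{a-1}a^{m-1}\le 2a^{m-1}$.

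For part (ii), I evaluate at $x=k-1$ and drop $B$:
$$f_1(n,k,t,s,k-1)\ge \qbinom{k-1}{t}a^{k-1-t}\qbinom{n-k+1}{1}.$$
Here $\qbinom{n-k+1}{1}\ge q^{n-k}\ge 7sl\,q^{k+2t+1}$. I must beat $\tfrac{7l}{6}s\qbinom{k}{t}\binom{2k-2t+2}{k-t+1}$. The factors $7sl$ cancel against $\tfrac{7l}{6}s$, leaving a factor-6 margin. The ratio $\qbinom{k}{t}/\qbinom{k-1}{t}=\frac{q^k-1}{q^{k-t}-1}<q^{t+1}$ by Lemma \ref{yinli6.1}(ii). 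The binomial satisfies $\binom{2m}{m}\le 4^{m}\le q^{2m}$ with $m=k-t+1$. The exponent needed is $t+1+2(k-t+1)=2k-t+3$, while the available exponent is $k+2t+1+(k-1-t)\log_q a\ge k+2t+1+(k-1-t)(k-t)$. When $k-t$ is small the quadratic term is small, so I will check $k=t+1$ directly: there $a=\qbinom{2}{1}$, and the binomial is $\binom{4}{2}=6$, which the factor $6$ absorbs. For $k-t\ge2$ the quadratic term dominates, giving the inequality with the slack from $q\ge2$.
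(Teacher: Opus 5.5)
Your proposal is correct and follows essentially the paper's route. In (i) you compare $f_1(n,k,t,s,x+1)$ with the main term $\qbinom{x}{t}a^{x-t}\qbinom{n-x}{k-x}$ of $f_1(n,k,t,s,x)$, splitting into $A(x+1)/A(x)$ and $B(x+1)/A(x)$ exactly as the paper's ratio estimate does. In (ii) you drop the $s$-term, use $\qbinom{n-k+1}{1}>q^{n-k}$ and $\qbinom{k}{t}/\qbinom{k-1}{t}<q^{t+1}$, and treat $k=t+1$ separately, as the paper does; your bound $\binom{2m}{m}\le 4^m\le q^{2m}$ is just a cleaner way to handle the central binomial coefficient.

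One small fix in (i): with your loose factor $q^{k-n+1}$ and only $n\ge 2k+2t+1$, the bound on $A(x+1)/A(x)$ is $2q^{1-2t}$, which equals $1$ when $q=2$ and $t=1$. Use $\frac{q^{k-x}-1}{q^{n-x}-1}<q^{k-n}$ from Lemma \ref{yinli6.1}(ii), or the factor $7sl$ in the hypothesis, to get the $\le\frac12$ you actually need.
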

	\begin{proof} 
		(i) For each  $x\in \left \{ t,t+1,...,k-1 \right \}$, note that $$f(n,k,t,s,x)\geq {x\b t}{k-t+1\brack 1}^{x-t}{n-x\brack k-x}.$$ It follows from $n\geq 2k+2t+1+\log_{q} 7sl$ that
		
		\begin{equation*}
			\begin{aligned}
				\frac{f_{1}(n,k,t,s, x+1)}{f_{1}(n,k,t,s, x)}\leq  \frac{(q^{x+1}-1)(q^{k-x}-1){k-t+1\b1}}{(q^{x+1-t}-1)(q^{n-x}-1)}+\frac{s(q^{x+1}-1)\sum_{i=0}^{x-t}{k-t+1\b 1}^{i}}{(q^{x+1-t}-1){k-t+1\b 1}^{x-t}{n-x\b k-x}}.
			\end{aligned}
		\end{equation*}
		For $q\ge 2$, it is routine to check that 
		$$\frac{1}{2}q^{x+1-t}  \le q^{x+1-t}-1,~\sum_{i=0}^{x-t}{k-t+1\b 1}^{i}\le \frac{1}{2}{k-t+1\b 1} ^{x-t+1}. $$
		Then we have
		\begin{equation*}
			\begin{aligned}
				\frac{f_{1}(n,k,t,s, x+1)}{f_{1}(n,k,t,s, x)}\leq &\ \frac{q^{x+1}\cdot q^{k-t+1}  }{\frac{1}{2}\cdot q^{x+1-t}\cdot q^{n-k}   } +\frac{sq^{x+1} \cdot \frac{1}{2} q^{k-t} }{\frac{1}{2} q^{x+1-t}\cdot q^{(k-x)(n-x)} }\\
				\le &\ \frac{2}{q^{n-2k} } +\frac{s}{q^{n-2k}} <\frac{2}{7s}+ \frac{s}{7s}<1 .
			\end{aligned}
		\end{equation*}
		
		(ii) If $k=t+1$, then by $n\geq 2k+2t+1+\log_{q} 7sl$, we have $n-t\ge 2k+t+1+\log_{q} 7sl>\log_{q}(7slq^{t+1}-7sl+1 )$ and $\frac{6}{7l}\qbinom{n-t}{1}>6s\qbinom{t+1}{1}$.
		
		$$\frac{6}{7l}  f_{1} (n,k,t,s,k-1)=\frac{6}{7l}\qbinom{n-t}{1}> 6s\qbinom{t+1}{1}= s\qbinom{k}{t}\binom{2k-2t+2}{k-t+1}, $$ as desired. 
		
		Next assume $k\ge t+2$. Since $f_{1} (n,k,t,s,k-1) \ge \qbinom{k-1}{t}\qbinom{k-t+1}{1}^{k-t+1}\qbinom{n-k+1}{1} $, we have
		\begin{equation*}
			\begin{aligned}
				\frac{s\qbinom{k}{t}\binom{2k-2t+2}{k-t+1} }{ f_{1} (n,k,t,s,k-1)}\leq &\ \frac{s\qbinom{k}{t}\binom{2k-2t+2}{k-t+1}}{\qbinom{k-1}{t}\qbinom{k-t+1}{1}^{k-t-1}\qbinom{n-k+1}{1}} \le \frac{2s\cdot\frac{q^{k}-1 }{q^{k-t}-1 } \cdot(2+\frac{1}{k-t} )\cdot\binom{2k-2t}{k-t-1} }{(q^{k-t} )^{k-t-1}q^{n-k}  }\\		
				\leq &\  \frac{6s\cdot q^{t+1}q^{k-t-1}}{q^{n-k}}\left ( \frac{k-t+2}{q^{k-t+1}} \right )   ^{k-t-1} 
				\le  \frac{6s\cdot q^{k}}{q^{n-k}}\le\frac{6s}{7sl+2^{2t+1}}<\frac{6}{7l}.
			\end{aligned}
		\end{equation*}
		Hence, the desired result follows.
	\end{proof}
	
	\begin{lem}\label{yinli6.3}
		Let $n$, $k$, $t$ and $s$ be positive integers with $k\ge t+2$ and $n\geq 3k+3t+1+\log_{q} 13s$. Then the following hold.
		\begin{enumerate}[\normalfont(i)]
			\item $g_{1} (n,k,t,s)>\frac{25}{26} {k-t+1\brack 1}{n-t-1\brack k-t-1}{n-t\brack k-t}$.
			\item $g_{1} (n,k,t,s)>\left ( {k-t+1\brack 1}{n-t-1\brack k-t-1}+s \right ) ^{2}$.	
		\end{enumerate}
	\end{lem}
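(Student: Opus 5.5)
The plan is to bound $g_1$ from below by discarding the terms that only help. Since $\min\{s,q^{k-t+1}{t\brack 1}\}\ge 0$, we have
$$g_1(n,k,t,s)\ge \Bigl({n-t\brack k-t}-q^{(k-t)(k-t+1)}{n-k-1\brack k-t}\Bigr){n-t\brack k-t}.$$
So everything reduces to estimating $D:={n-t\brack k-t}-q^{(k-t)(k-t+1)}{n-k-1\brack k-t}$. This is the number of $(k-t)$-subspaces of an $(n-t)$-space (the quotient $V/E$) that meet a fixed $(k-t+1)$-subspace (the image of $X$) nontrivially. That count is at least ${k-t+1\brack 1}{n-t-1\brack k-t-1}$ minus the overcount coming from pairs of independent $1$-subspaces of that $(k-t+1)$-space. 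I will make this precise by inclusion--exclusion, or equivalently by Lemma \ref{jishu} with $l=k-t+1$, $e=n-k-1$, summing over $h\ge 1$.

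Step (i). Bonferroni gives
$$D\ge {k-t+1\brack 1}{n-t-1\brack k-t-1}-{k-t+1\brack 2}_{\!*}{n-t-2\brack k-t-2},$$
where the subtracted term counts pairs of distinct $1$-subspaces. I bound that number crudely by ${k-t+1\brack 1}^2$. Using Lemma \ref{yinli6.1}(ii), the ratio ${n-t-2\brack k-t-2}/{n-t-1\brack k-t-1}$ is less than $q^{k-n+1}$. Hence the correction is at most ${k-t+1\brack 1}\cdot 2q^{k-t}\cdot q^{k-n+1}$ times the main term. With $n\ge 3k+3t+1+\log_q 13s$, this factor is at most $2q^{2k-t+1-n}\le 2q^{-k-4t}/(13s)\le 1/26$, since ${k-t+1\brack 1}\le 2q^{k-t}$ by Lemma \ref{yinli6.1}(i). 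This yields $D\ge \frac{25}{26}{k-t+1\brack 1}{n-t-1\brack k-t-1}$ and hence (i). The only care needed is the exact form of the pair-overcount and checking it is dominated as claimed; the large room in $n$ makes this comfortable.

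Step (ii). From (i), $g_1>\frac{25}{26}AB$ with $A={k-t+1\brack 1}{n-t-1\brack k-t-1}$ and $B={n-t\brack k-t}$. It therefore suffices to show $\frac{25}{26}AB\ge (A+s)^2$. Since $s\le A$ (because $A\ge q^{n-k}\ge 13s$), we have $(A+s)^2\le A^2(1+1/13)^2<\frac{7}{6}A^2$. So it remains to show $B/A\ge \frac{26\cdot 7}{25\cdot 6}$, which is about $1.22$. By Lemma \ref{yinli6.1}(ii),
$$B/{n-t-1\brack k-t-1}=\frac{q^{n-t}-1}{q^{k-t}-1}>q^{n-k},$$
and ${k-t+1\brack 1}<2q^{k-t}$. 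Thus $B/A>q^{n-2k+t}/2$, which is huge under the hypothesis on $n$. This completes (ii).

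The main obstacle is the lower bound on $D$ in step (i), specifically writing the complement count cleanly. Everything else is routine ratio estimates from Lemma \ref{yinli6.1}.
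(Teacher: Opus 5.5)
Your proof is correct and is essentially the paper's argument. In (i) you bound ${n-t\brack k-t}-q^{(k-t)(k-t+1)}{n-k-1\brack k-t}$ from below by truncating an inclusion--exclusion after two terms, and show the correction is a negligible fraction of ${k-t+1\brack 1}{n-t-1\brack k-t-1}$. In (ii) you reduce to (i) by showing $\bigl({k-t+1\brack 1}{n-t-1\brack k-t-1}+s\bigr)^2$ is tiny compared with ${k-t+1\brack 1}{n-t-1\brack k-t-1}{n-t\brack k-t}$, exactly as the paper does.

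The only variation is in (i). You use the ordinary degree-two Bonferroni inequality over the $1$-subspaces of the $(k-t+1)$-space, with pair count at most ${k-t+1\brack 1}^2$. The paper instead uses the $q$-analogue alternating sum $\sum_i(-1)^{i-1}a_i(k)$. Your version spares you the monotonicity $a_{i+1}(k)<a_i(k)$ that the paper asserts without proof. The strict inequality in (i) comes for free from the $+s$ in the first factor of $g_1$.
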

	
	\begin{proof} 
		(i) We follow the approach and arguments presented in \cite {2405263}. For each integer $i$ with $1\le i\le k-t+1$, we have
		\begin{align}\label{1}
			{n-t\brack k-t}-q^{(k-t)(k+1-t)}{n-k-1\brack k-t}=\sum_{i=1}^{k-t} (-1)^{i-1}a_{i}(k) ,
		\end{align}
		where $a_{i}(k)= q^{\binom{i}{2} }{k-t+1\brack i}{n-t-i\brack k-t-i}$ and $\frac{a_{i+1}(k) }{a_{i}(k)} <1 $. 
		Then it is clear that 
		\begin{equation*}\label{2}
			\begin{aligned}
				g_{1}(n,k,t,s)>&\  \left ( {n-t \brack k-t}-q^{(k-t)(k+1-t)}{n-k-1\brack k-t} \right ) {n-t\brack k-t}\\
				=&\ \sum_{i=1}^{k-t} (-1)^{i-1}q^{\binom{i}{2} }{k-t+1\brack i}{n-t-i\brack k-t-i}{n-t\brack k-t}\\
				>&\ \left ( {k-t+1\brack 1}{n-t-1\brack k-t-1}-q{k-t+1\brack 2}{n-t-2\brack k-t-2} \right ) {n-t\brack k-t}.
			\end{aligned}
		\end{equation*}
		
		From the above equation and $k\ge t+2$ and $n\geq 3k+3t+1+\log_{q} 13s$, we obtain
		
		\begin{equation*}
			\begin{aligned}
				\frac{	g_{1}(n,k,t,s)}{{k-t+1\brack 1}{n-t-1\brack k-t-1}{n-t\brack k-t}} >&\ 1-\frac{q{k-t+1\brack 2}{n-t-2\brack k-t-2}}{{n-t-1\brack k-t-1}{k-t+1\brack 1}} =1-\frac{(q^{k-t}-1 )(q^{k-t-1}-1)}{(q^{2}-1 )(q^{n-t-1}-1)} \\
				>&\ 1-\frac{1}{q^{n-2k+t+1} } >1-\frac{1}{13s\cdot2^{4t+2} } >1-\frac{1}{26}>\frac{25}{26}. 
			\end{aligned}
		\end{equation*}
		Then (i) holds.
		
		(ii)Note that
		\begin{equation*}
			\begin{aligned}
				\frac{\left ( {k-t+1\brack 1}{n-t-1\brack k-t-1}+s \right ) ^{2}}{{k-t+1\brack 1}{n-t-1\brack k-t-1}{n-t\brack k-t}}= \left (  1+\frac{s}{{k-t+1\brack 1}{n-t-1\brack k-t-1}} \right )  \cdot \frac{{k-t+1\brack 1}{n-t-1\brack k-t-1}+s}{{n-t\brack k-t}}.
		\end{aligned}
\end{equation*}
		By Lemma \ref{yinli6.1} (ii)(iii) and $k\ge t+2$, $n\geq 3k+3t+1+\log_{q} 13s$, we have the above expression
		\begin{equation*}
			\begin{aligned}
				\frac{\left ( {k-t+1\brack 1}{n-t-1\brack k-t-1}+s \right ) ^{2}}{{k-t+1\brack 1}{n-t-1\brack k-t-1}{n-t\brack k-t}} \le &\ \left (  1+\frac{s}{q^{k-t}\cdot q^{(k-t-1)(n-k)} }  \right ) \left (  \frac{q^{k-t+1} }{q^{n-k}}+\frac{s}{q^{(k-t)(n-k)}} \right ) \\
				\le &\  \left ( 1+\frac{s}{q^{2}\cdot q^{\log_{q}{13s} } }  \right ) \left (  \frac{1 }{q^{\log_{q}{13s}}}+\frac{s}{q^{2\log_{q}{13s}}} \right )\\
				\le &\ \left ( 1+\frac{1}{4\cdot13} \right )  \left ( \frac{1}{13}+  \frac{1}{13^{2} } \right ) =\frac{252}{2873} <1.
			\end{aligned}
		\end{equation*}
		This together with (i) yields (ii).
	\end{proof} 
	
		For positive integers $n,k,t,s$ and $x$, write
	
	\begin{align}\label{f2}
		f_{2}(n,k,t,s,x)={x-t\brack 1} {n-t-1\brack k-t-1}+q^{2(x-t)} {k+1-x\brack 1}^{2} {n-t-2\brack k-t-2} +2s  ,
	\end{align}
	\begin{align}\label{f3}
		f_{3}(n,k,t,s,x)={n-t\brack k-t}+q^{x-t+1}{k-t \brack 1}{t\brack 1}{n-x\brack k-x}+s .
	\end{align}
	
		\begin{lem}\label{yinli6.5}
		Let $n$, $k$, $t$ and $s$ be as in Lemma \ref{yinli6.3}. The function $f_{3} (n,k,t,s,x)$ is decreasing as $x\in \left \{  t+1, t+2,...,k-1  \right \}$ increases.
	\end{lem}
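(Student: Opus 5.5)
Since the terms $\qbinom{n-t}{k-t}$ and $s$ in $f_3$ do not depend on $x$, it suffices to show that
$$h(x):=q^{x-t+1}{k-t \brack 1}{t\brack 1}{n-x\brack k-x}$$
is strictly decreasing for $x\in\{t+1,\dots,k-1\}$. For $t+1\le x\le k-2$ we compare consecutive terms. The factors ${k-t\brack 1}{t\brack 1}$ cancel, and using ${n-x-1\brack k-x-1}/{n-x\brack k-x}=(q^{k-x}-1)/(q^{n-x}-1)$ we get
$$\frac{h(x+1)}{h(x)}=q\cdot\frac{q^{k-x}-1}{q^{n-x}-1}.$$

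Next we bound this ratio. By Lemma \ref{yinli6.1}(ii), applied with $n-x>k-x$, we have $\frac{q^{n-x}-1}{q^{k-x}-1}>q^{n-k}$. Hence
$$\frac{h(x+1)}{h(x)}<\frac{q}{q^{n-k}}=q^{-(n-k-1)}.$$
The hypothesis of Lemma \ref{yinli6.3} gives $n\ge 3k+3t+1+\log_q 13s$, so $n-k-1\ge 2k+3t>0$. Therefore the ratio is strictly less than $1$, i.e.\ $h(x+1)<h(x)$, and consequently $f_3(n,k,t,s,x+1)<f_3(n,k,t,s,x)$.

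If $k=t+2$, the range $\{t+1,\dots,k-1\}$ is the single point $\{t+1\}$ and the statement holds trivially; otherwise the argument above applies to every consecutive pair. The only point needing care is the index range: $x\le k-2$ ensures $k-x\ge 2$, so ${n-x-1\brack k-x-1}$ is a genuine Gaussian coefficient with positive lower index and the ratio formula and Lemma \ref{yinli6.1}(ii) apply directly. Beyond this, no real obstacle is expected.
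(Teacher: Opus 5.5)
Your proof is correct and is essentially the paper's argument: the paper bounds the difference $f_3(n,k,t,s,x)-f_3(n,k,t,s,x+1)$ from below by $q^{x-t+1}\bigl(\frac{q^{n-x}-1}{q^{k-x}-1}-q\bigr){n-x-1\brack k-x-1}>0$, using the same identity ${n-x\brack k-x}=\frac{q^{n-x}-1}{q^{k-x}-1}{n-x-1\brack k-x-1}$ and Lemma \ref{yinli6.1}(ii), whereas you phrase the comparison as a ratio bound. Your restriction to $x\le k-2$ is harmless; the same computation also works at $x=k-1$ with the convention ${n-k\brack 0}=1$, which is the range the paper's proof implicitly covers.
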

	
	\begin{proof} 
	By Lemma \ref{yinli6.1} (ii) and (iii), for each $x\in \left \{ t+1, t+2,...,k-1 \right \}$, we know
		\begin{equation*}
			\begin{aligned}
				f_{3}(n,k,t,s,x)-f_{3}(n,k,t,s,x+1)=&\ q^{x-t+1}{k-t \brack 1}{t\brack 1}{n-x\brack k-x}-q^{x-t+2}{k-t \brack 1}{t\brack 1}{n-x-1\brack k-x-1}\\
				\ge &\ q^{x-t+1}\left ( {n-x\brack k-x}-q{n-x-1\brack k-x-1} \right ) \\
				\ge &\ q^{x-t+1}\left ( \frac{q^{n-x}-1 }{q^{k-x}-1 } -q \right ) {n-x-1\brack k-x-1}\\
				\ge &\ q^{n-k}-q>0,
			\end{aligned}
		\end{equation*}
		as desired.
	\end{proof} 
	
	\begin{lem}\label{yinli6.4}
		Let $n$, $k$, $t$ and $s$ be as in Lemma \ref{yinli6.3}. Then the following hold.
		\begin{enumerate}[\normalfont(i)]
			\item The function $f_{3} (n,k,t,s,x)$ is increasing as $x\in \left \{  t+1, t+2,...,k-1  \right \}$ increases.
			\item $f_{2} (n,k,t,s,k)\left ( {n-t\brack k-t}+q^{2}{k-t \brack 1}{t\brack 1}{n-t-1\brack k-t-1}+s \right ) <g_{1}(n,k,t,s)$.
		\end{enumerate}
	\end{lem}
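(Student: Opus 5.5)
For part (i) I would work directly with consecutive differences of $f_{3}$, exactly as in the proof of Lemma \ref{yinli6.5}. By \eqref{f3}, the terms ${n-t\brack k-t}$ and $s$ do not depend on $x$. Hence for $x\in\{t+1,\dots,k-2\}$,
$$f_{3}(n,k,t,s,x+1)-f_{3}(n,k,t,s,x)=q^{x-t+1}{k-t\brack 1}{t\brack 1}\left(q{n-x-1\brack k-x-1}-{n-x\brack k-x}\right).$$
Using ${n-x\brack k-x}=\frac{q^{n-x}-1}{q^{k-x}-1}{n-x-1\brack k-x-1}$, the sign of this difference is the sign of $q-\frac{q^{n-x}-1}{q^{k-x}-1}$. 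By Lemma \ref{yinli6.1}(ii) and $n\ge 3k+3t+1+\log_q 13s$, this quantity is of order $q-q^{n-k}$. The whole of (i) therefore reduces to determining this single sign. I must flag a serious problem here: Lemma \ref{yinli6.5}, proved just above under the same hypotheses, shows that this difference is negative, so it yields $f_3$ \emph{decreasing}. With the definition \eqref{f3} as written, the increasing claim in (i) contradicts Lemma \ref{yinli6.5}, and I do not see how to prove (i) as worded. The comparison above is the only computation needed, and it goes the wrong way. The main obstacle for (i) is thus not a technical estimate but this sign. I would first recheck the definition of $f_{3}$, or the intended function in (i), against how the lemma is used in Section \ref{section3}.

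For part (ii) I would bound the two factors separately and compare with Lemma \ref{yinli6.3}(i). By \eqref{f2},
$$f_{2}(n,k,t,s,k)={k-t\brack 1}{n-t-1\brack k-t-1}+q^{2(k-t)}{n-t-2\brack k-t-2}+2s.$$
First, ${k-t\brack 1}/{k-t+1\brack 1}<q^{-1}\le \frac12$. Next, by Lemma \ref{yinli6.1}(ii),(iii),
$$q^{2(k-t)}{n-t-2\brack k-t-2}\Big/{n-t-1\brack k-t-1}\le q^{2(k-t)}q^{-(n-k)},$$
and $2s/{n-t-1\brack k-t-1}$ are both at most a small multiple of $1/(13s\,q^{2k+2t})$. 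Together these give
$$f_{2}(n,k,t,s,k)\le \left(\tfrac12+\varepsilon\right){k-t+1\brack 1}{n-t-1\brack k-t-1}$$
with $\varepsilon\le 1/26$.

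For the second factor, I would use $q^{2}{k-t\brack 1}{t\brack 1}{n-t-1\brack k-t-1}\le 4q^{k+1}\,q^{-(n-k)}\cdot\frac{q^{k-t}}{q^{k-t}}{n-t\brack k-t}$, which follows from ${n-t-1\brack k-t-1}\le q^{-(n-k)}{n-t\brack k-t}$ and Lemma \ref{yinli6.1}(i). Together with $s\le q^{n}/13$, which is tiny against ${n-t\brack k-t}$, this bounds the second factor by $(1+\delta){n-t\brack k-t}$ with $\delta<1/13$.

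Multiplying the two bounds gives at most $(\tfrac12+\tfrac1{26})(1+\tfrac1{13})\,{k-t+1\brack 1}{n-t-1\brack k-t-1}{n-t\brack k-t}$. This is below $\tfrac{25}{26}$ of that product, which is less than $g_{1}(n,k,t,s)$ by Lemma \ref{yinli6.3}(i). The only care needed in (ii) is tracking the exponent of $q$ so that $n-k\ge 2k+3t+1+\log_q 13s$ absorbs every $q^{k+1}$ and $q^{2(k-t)}$ factor; this is routine.
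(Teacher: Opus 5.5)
Your reading of (i) is right. By Lemma \ref{yinli6.5}, $f_{3}$ is strictly \emph{decreasing} on $\{t+1,\dots,k-1\}$, so (i) as printed is false once $k\ge t+3$ (and vacuous for $k=t+2$). It is a typo for $f_{2}$:
\begin{itemize}
\item the paper's proof of (i) actually computes $f_{2}(n,k,t,s,x+1)-f_{2}(n,k,t,s,x)$ and shows it is positive for $x\in\{t+1,\dots,k-1\}$;
\item this is exactly what the proof of Claim \ref{duanyan3.6} needs, to pass from $|\mf|\le f_{2}(n,k,t,s,\dim M)$ with $t+1\le \dim M\le k$ to $|\mf|\le f_{2}(n,k,t,s,k)$. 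There Lemma \ref{yinli6.5} is the result applied to $f_3$, to bound $|\mg|$.
\end{itemize}
The shortfall in your proposal is that you stopped at the diagnosis instead of carrying out the check you suggested, so the intended content of (i) is left unproved. The proof is short. Using ${x+1-t\brack 1}-{x-t\brack 1}=q^{x-t}$ and ${k+1-x\brack 1}=q{k-x\brack 1}+1$,
\[
f_{2}(n,k,t,s,x+1)-f_{2}(n,k,t,s,x)=q^{x-t}{n-t-1\brack k-t-1}-q^{2(x-t)}\,\frac{2q^{k+1-x}-q-1}{q-1}\,{n-t-2\brack k-t-2}.
\]
Since ${n-t-1\brack k-t-1}=\frac{q^{n-t-1}-1}{q^{k-t-1}-1}{n-t-2\brack k-t-2}>q^{n-k}{n-t-2\brack k-t-2}$, for $t+1\le x\le k-1$ the difference exceeds $\left(q^{n-k+1}-2q^{2k-2t}\right){n-t-2\brack k-t-2}$. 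This is positive because $n>3k-2t$.

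For (ii) you follow the paper's route:
\begin{itemize}
\item bound $f_{2}(n,k,t,s,k)$ by $\left(\frac1q+\varepsilon\right){k-t+1\brack 1}{n-t-1\brack k-t-1}$ with $\varepsilon$ negligible;
\item bound the second factor by $(1+\delta){n-t\brack k-t}$;
\item close with Lemma \ref{yinli6.3}(i). The paper gets $\frac{8}{13}\cdot\frac{171}{169}<\frac{25}{26}$.
\end{itemize}
One intermediate claim of yours is inaccurate but harmless. The hypothesis on $n$ gives $q^{3k-2t-n}\le 1/(13sq^{5t+1})$ and $2s/{n-t-1\brack k-t-1}< 2/(13q^{2k+3t+1})$. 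These are not small multiples of $1/(13sq^{2k+2t})$ when $k$ or $s$ is large. Both are still far below $1/26$, so $\varepsilon\le 1/26$ and your conclusion stand.
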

	
	\begin{proof} 
		(i)We have
		\begin{equation*}
			\begin{aligned}
				&\ f_{2}(n,k,t,s,x+1)-f_{2}(n,k,t,s,x)\\
				= &\ \left ( {x+1-t\brack 1}-{x-t\brack 1}  \right ) {n-t-1\brack k-t-1} +\frac{q^{2(x-t)}(q+1)-2q^{k-2t+1+x}  }{q-1}{n-t-2\brack k-t-2} \\
				\ge &\ q^{x-t} {n-t-1\brack k-t-1} +\frac{q^{x-2t} \left ( q^{x}(q+1)-2q^{k+1}  \right )   }{q-1}{n-t-2\brack k-t-2}.   
			\end{aligned}
		\end{equation*}
 For each $x\in \left \{ t+1, t+2,...,k-1 \right \}$ and $k\ge t+2$, we further obtain				
			\begin{equation*}
			\begin{aligned}			
				f_{2}(n,k,t,s,x+1)-f_{2}(n,k,t,s,x) \ge &\ q {n-t-1\brack k-t-1} +\frac{q^{k-1-2t}\left ( q^{t+1}(q+1)-2q^{k+1}  \right )  }{q-1}{n-t-2\brack k-t-2} \\
				\ge &\ {n-t-2\brack k-t-2}\left ( \frac{q^{n-t}-q }{q^{k-t-1} -1} +\frac{q^{3} +q^{2}-2q^{2(k-t)} }{q-1}  \right ) \\
				\ge &\ {n-t-2\brack k-t-2} \cdot \frac{q^{n-t}(q-1)+q-2q^{2}+3q^{4}-2q^{3k-3t-1}    }{\left ( q^{k-t-1} -1 \right )\left (  q-1 \right ) }.
            	\end{aligned}
            \end{equation*}
By $q\ge 2$, $q-2q^{2}+3q^{4}>0$ and $n\geq 3k+3t+1+\log_{q} 13s$, we have				
				\begin{equation*}
					\begin{aligned}			
				f_{2}(n,k,t,s,x+1)-f_{2}(n,k,t,s,x)\ge &\ \frac{{n-t-2\brack k-t-2}}{\left ( q^{k-t-1} -1 \right )\left (  q-1 \right ) } \cdot \left ( q^{n-t}-q^{3k-3t}  \right ) \\
				\ge &\ \frac{{n-t-2\brack k-t-2}q^{3k-3t}}{\left ( q^{k-t-1} -1 \right )\left (  q-1 \right ) } \cdot  (q^{n-3k+2t}-1 ) >0.
			\end{aligned}
		\end{equation*}
		
		(ii)Observe that
		\begin{equation*}
			\begin{aligned}
			&\ \frac{f_{2}(n,k,t,s,k)\left ( {n-t\brack k-t}+q^{2}{k-t \brack 1}{t\brack 1}{n-t-1\brack k-t-1}+s \right ) }{{k-t+1\brack 1}{n-t-1\brack k-t-1}{n-t\brack k-t}}\\
			 \le &\ \left ( \frac{{k-t\brack 1}}{{{k-t+1\brack 1}}}+\frac{q^{2(k-t)} {n-t-2\brack k-t-2}}{{{k-t+1\brack 1}{n-t-1\brack k-t-1}}} +\frac{2s}{{k-t+1\brack 1}{n-t-1\brack k-t-1}}  \right ) \left ( 1+\frac{q^{2}{k-t\brack 1}{t\brack 1}{n-t-1\brack k-t-1} }{{n-t\brack k-t}} +\frac{s}{{n-t\brack k-t}}  \right ).
			\end{aligned}
	\end{equation*}	 
	By Lemma \ref{yinli6.1} (ii)(iii), $k\ge t+2$ and $n\geq 3k+3t+1+\log_{q} 13s$, we have				
	\begin{equation*}
		\begin{aligned}	 		 
	&\ \frac{{k-t\brack 1}}{{{k-t+1\brack 1}}}+\frac{q^{2(k-t)} {n-t-2\brack k-t-2}}{{{k-t+1\brack 1}{n-t-1\brack k-t-1}}} +\frac{2s}{{k-t+1\brack 1}{n-t-1\brack k-t-1}} \\
	 \le &\  \frac{q^{k-t}-1 }{q^{k-t+1}-1}+  \frac{q^{2(k-t)} }{q^{k-t}}\cdot \frac{q^{k-t-1}-1}{q^{n-t-1}-1}+\frac{2s}{q^{k-t}\cdot q^{(k-t-1)(n-k)} }  \\
	 \le &\ \frac{1}{q}+\frac{1}{q^{n-2k+t} }   +\frac{2s}{q^{2}q^{n-k} }  \\
	 \le &\ \frac{1}{2}  +\frac{1}{13}+\frac{2}{2^{2}\cdot 13 }=\frac{8}{13}.
		\end{aligned}
\end{equation*}
And
	\begin{equation*}
	\begin{aligned}	 	
		1+\frac{q^{2}{k-t\brack 1}{t\brack 1}{n-t-1\brack k-t-1} }{{n-t\brack k-t}} +\frac{s}{{n-t\brack k-t}}
		\le &\ 1+\frac{q^{2}\cdot q^{k-t}\cdot q^{t} \cdot \left ( q^{k-t}-1 \right ) }{q^{n-k}-1 } +\frac{s}{q^{(k-t)(n-k)}}\\
		\le &\  1+\frac{1}{q^{n-2k-2} }+\frac{s}{q^{2(n-k)} } \\
		\le &\ 1  +\frac{1}{13}+\frac{1}{13^{2} }=\frac{171}{169}.
			\end{aligned}
	\end{equation*}
	Then we can obtain
	\begin{equation*}
		\begin{aligned}	 
				 \frac{f_{2}(n,k,t,s,k)\left ( {n-t\brack k-t}+q^{2}{k-t \brack 1}{t\brack 1}{n-t-1\brack k-t-1}+s \right ) }{{k-t+1\brack 1}{n-t-1\brack k-t-1}{n-t\brack k-t}}\le \frac{8}{13}\cdot \frac{171}{169}=\frac{1368}{2197} < \frac{25}{26}.
					\end{aligned}
			\end{equation*}

		It follows from Lemma \ref{yinli6.3} (i) that (ii) holds.
	\end{proof}

	\begin{lem}\label{6.5}
		Let $n$, $k$, $t$ and $s$ be as in Lemma \ref{yinli6.3}. Then the following hold.
		\begin{enumerate}[\normalfont(i)]
			\item $g_{1} (n,k,t,s)>\left ( {n-t\brack k-t}-q^{(k-t)(k+1-t)}{n-k-1\brack k-t}  \right ) \left ( {n-t\brack k-t}+ q^{(k-t+1)} \qbinom{t}{1} \right )$.
			\item If $k\le 2t$ and $t\ne 2$, then $g_{1}(n,k,t,s)<g_{2}(n,k,t)$.
		\end{enumerate}
	\end{lem}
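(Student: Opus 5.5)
For part (i), write $A={n-t\brack k-t}-q^{(k-t)(k+1-t)}{n-k-1\brack k-t}$, $B={n-t\brack k-t}$, $D={n-t-1\brack k-t-1}$ and $c=q^{k-t+1}{t\brack 1}$. Then $g_1(n,k,t,s)=(A+s)(B+\min\{s,c\})$, and the claim is $g_1>A(B+c)$. Split according to whether $s\ge c$.

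If $s\ge c$, the claim is immediate because $A+s>A$.

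If $s<c$, it suffices to show $(A+s)(B+s)>A(B+c)$. After expanding, it is enough to have $sB> A(c-s)$, and since $s\ge 1$ it suffices that $B> Ac$.

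To bound $A$, I will use the alternating expansion (\ref{1}) from the proof of Lemma \ref{yinli6.3}. Its terms decrease in absolute value, so the sum is at most its first term: $A\le {k-t+1\brack 1}D$. Then Lemma \ref{yinli6.1}(i)(ii) gives
$$B/D=(q^{n-t}-1)/(q^{k-t}-1)>q^{n-k},\qquad {k-t+1\brack 1}\,c\le 2q^{k-t}\cdot q^{k-t+1}\cdot 2q^{t-1}=4q^{2k-t}.$$
So $B>Ac$ follows from $q^{n-k}\ge 4q^{2k-t}$, i.e. $n\ge 3k-t+2$. This is comfortably implied by $n\ge 3k+3t+1+\log_q 13s$.

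For part (ii), I will bound $g_1$ from above by $(A+s)(B+c)$ and expand $g_2=q^{k-t}{t+1\brack 1}DB+D^2\ge q^{k-t}{t+1\brack 1}DB$. The comparison is driven by the leading coefficients. On one side, $(A+s)(B+c)\le {k-t+1\brack 1}DB+{k-t+1\brack 1}Dc+sB+sc$. On the other, $k\le 2t$ gives $k-t+1\le t+1$, so ${k-t+1\brack 1}\le{t+1\brack 1}$. Since $k-t\ge 2$, this leaves the gap
$$\bigl(q^{k-t}-1\bigr){t+1\brack 1}DB\ \ge\ 3{t+1\brack 1}DB,$$
which is at least a fixed constant fraction of $DB$.

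It then remains to show that the three error terms ${k-t+1\brack 1}Dc$, $sB$ and $sc$ are together smaller than this gap. Each is a tiny fraction of $DB$:
\begin{itemize}
\item For ${k-t+1\brack 1}Dc$: $c\le 2q^{k}$, while $B>q^{(k-t)(n-k)}$ by Lemma \ref{yinli6.1}(iii).
\item For $sB$: $D\ge{n-t-1\brack 1}>q^{n-k}>13s\,q^{2k+3t}$, because $k\ge t+2$ and $n\ge 3k+3t+1+\log_q 13s$; hence $sB<DB/13$.
\item For $sc$: this is dominated by both of the above.
\end{itemize}
Collecting these bounds gives $(A+s)(B+c)<q^{k-t}{t+1\brack 1}DB\le g_2(n,k,t)$.

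The hypothesis $t\ne 2$ only excludes the boundary case $t=2$, $k=4$ (compare the exclusion $(k,t)\ne(4,2)$ in Theorem \ref{dingli5.2}). The constants in my estimates do not seem to need it, but I will recheck at that step. The main obstacle is keeping the constants honest in part (ii) when $k=t+2$ is small. There I will use the sharper inequality $q^{k-t}{t+1\brack 1}-{k-t+1\brack 1}\ge (q^2-1){t+1\brack 1}$ rather than crude powers of $q$.
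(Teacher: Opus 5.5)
Your part (i) is correct and is essentially the paper's argument. The paper writes $g_1-A(B+c)\ge sB-cA$ in one step instead of splitting on $s\ge c$, and uses the same bounds $A\le{k-t+1\brack 1}D$ and $B/D>q^{n-k}$.

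Part (ii) has a genuine gap: you misread $g_2$. Its first summand is $q^{k-t}{t+1\brack 1}D{n-t-1\brack k-t}$, not $q^{k-t}{t+1\brack 1}DB$. By Lemma \ref{yinli6.1}(iv), $q^{k-t}{n-t-1\brack k-t}=B-D$, so
\[
g_2(n,k,t)={t+1\brack 1}D(B-D)+D^2={t+1\brack 1}DB-q{t\brack 1}D^2<{t+1\brack 1}DB .
\]
Your bound $g_2\ge q^{k-t}{t+1\brack 1}DB$ is therefore false by a factor larger than $q^{k-t}$. The margin $(q^{k-t}-1){t+1\brack 1}DB$, on which all of (ii) rests, does not exist. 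With the correct formula, your first-order comparison still works for $k\le 2t-1$. There ${k-t+1\brack 1}\le{t\brack 1}$, so the true margin is roughly $q^{t}DB$. This absorbs $q{t\brack 1}D^2$, ${t\brack 1}Dc$, $sB$ and $sc$, and is the paper's Case 1.

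For $k=2t$ the approach cannot work. Here ${k-t+1\brack 1}={t+1\brack 1}$, so your upper estimate $({t+1\brack 1}D+s)(B+c)$ already exceeds ${t+1\brack 1}DB>g_2$. The leading terms cancel exactly, and you must use the second term of the alternating expansion (\ref{1}):
\[
A\le{t+1\brack 1}D-q{t+1\brack 2}{n-t-2\brack t-2}+q^{3}{t+1\brack 3}{n-t-3\brack t-3}.
\]
You then need to show that $q{t+1\brack 2}{n-t-2\brack t-2}B$, minus the much smaller third-order contribution, dominates $q{t\brack 1}D^2+{t+1\brack 1}Dc+sB+sc$. The paper carries out this second-order comparison via $g_3$ and two lemmas quoted from the literature. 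The term $sB$ is controlled only because ${n-t-2\brack t-2}\ge{n-t-2\brack 1}$ is much larger than $s$, and this requires $t\ge 3$.

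Your remark that $t\ne 2$ seemed unnecessary was the warning sign. For $(k,t)=(4,2)$ one computes $A=(q^2+q+1)(D-q)$, and hence
\[
g_1-g_2\ge (A+s)B-g_2=(s-q^3-q^2-q)B+q(q+1)D^2>0
\]
whenever $s\ge q^3+q^2+q$. So (ii) genuinely fails in that case, and no argument that ignores the hypothesis can be correct.
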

	\begin{proof} 
		It is clear that 

		\begin{align}	\label{6.6}
			{n-t\brack k-t}-q^{(k-t)(k+1-t)}{n-k-1\brack k-t}=&\ \left | \left \{ F\subseteq {V\brack k} :W\subseteq F,\dim(F\cap M)\ge t+1 \right \}  \right | \nonumber\\
			\le &\ {k+1-t\brack 1}{n-t-1\brack k-t-1} .
		\end{align}

		(i) From (\ref{6.6}) and $n\geq 3k+3t+1+\log_{q} 13s$, we obtain  
		\begin{equation*}
			\begin{aligned}
				&\	g_{1}(n,k,t,s)- \left ( {n-t\brack k-t}-q^{(k-t)(k+1-t)}{n-k-1\brack k-t}  \right ) \left ( {n-t\brack k-t}+ q^{(k-t+1)} \qbinom{t}{1} \right) \\
				\ge &\ s{n-t\brack k-t}-q^{(k-t+1)} \qbinom{t}{1}\left ( {n-t\brack k-t}-q^{(k-t)(k+1-t)}{n-k-1\brack k-t} \right ) \\
				\ge &\ \left ( \frac{s(q^{n-t}-1 )}{q^{k-t}-1} -q^{k-t+1}{t\brack 1}{k-t+1\brack 1}  \right ) {n-t-1\brack k-t-1}  \\
				\ge &\ \left ( s q^{n-k}-q^{2k-t+2} \right ){n-t-1\brack k-t-1}> q^{2k+3t+1 } -q^{2k-t+2}>0.
			\end{aligned}
		\end{equation*}
		Then (i) holds.
		
		(ii) Divide proof  into two parts.
		
		\medskip
		\noindent{{\bf Case 1.} $k\le 2t-1$.}
		\medskip
		
		By Lemma \ref{yinli6.1} (ii)(iii), we have
		\begin{equation*}
			\begin{aligned}
				&\	\left ( \frac{q^{k-t}(q^{n-k}-1 ) }{q^{n-t}-1 } {t+1\brack 1}{n-t-1\brack k-t-1}-{t\brack 1}{n-t-1\brack k-t-1}-s \right ) {n-t\brack k-t}\\
				=&\ \left (  \left ( \frac{q^{k-t}(q^{n-k}-1 ) }{q^{n-t}-1 } {t+1\brack 1}-{t\brack 1}  \right ){n-t-1\brack k-t-1} -s \right )  {n-t\brack k-t}\\
				\ge &\ \left( \left (  \frac{1}{q}  {t+1\brack 1}-{t\brack 1}  \right ){n-t-1\brack k-t-1} -s \right ) {n-t\brack k-t}\\
				=&\ \left ( \frac{1}{q} {n-t-1\brack k-t-1} -s \right )  {n-t\brack k-t}>13s q^{2k+3t} -s>0
			\end{aligned}
		\end{equation*}
and		
		\begin{equation*}\label{6.7}
			\begin{aligned}
				{n-t-1\brack k-t-1}^{2}-s {t\brack 1}{n-t-1\brack k-t-1}-s^{2}=&\ {n-t-1\brack k-t-1}\left ( {n-t-1\brack k-t-1}-s {t\brack 1} \right ) -s^{2}\\
				> &\ q^{n-k} \left ( q^{n-k}-s\cdot q^{t} \right ) -s^{2} \\
				>&\ 13s(13sq^{t} -s q^{t})-s^{2} =156s^{2} q^{t}-s^{2}>0.
			\end{aligned}
		\end{equation*}
		Note that $k\le 2k-1$, it follows from Lemma \ref{6.5} that 
		\begin{equation*}
			\begin{aligned}
				g_{1}(n,k,t,s)\le &\ \left ( {k-t+1\brack 1}{n-t-1\brack k-t-1}+s \right )\left (  {n-t\brack k-t}+s \right ) \\
				\le &\ {t\brack 1}{n-t-1\brack k-t-1}{n-t\brack k-t}+s{t\brack 1}{n-t-1\brack k-t-1}+s{n-t\brack k-t}+s^{2}:=n_{1}(n,k,t,s).
			\end{aligned}
		\end{equation*}
		We have 
		\begin{equation*}
			\begin{aligned}
				&\	g_{2}(n,k,t)-g_{1}(n,k,t,s)\ge g_{2}(n,k,t)-n_{1}(n,k,t,s) \\
			= &\ q^{k-t}{t+1\brack 1}{n-t-1\brack k-t-1}{n-t-1\brack k-t} +{n-t-1\brack k-t-1}^{2} -{t\brack 1}{n-t-1\brack k-t-1}{n-t\brack k-t}\\
				&\ -s{t\brack 1}{n-t-1\brack k-t-1}-s{n-t\brack k-t}-s^{2}\\
				=&\ \left ( \frac{q^{k-t}(q^{n-k}-1 ) }{q^{n-t}-1 } {t+1\brack 1}{n-t-1\brack k-t-1}-{t\brack 1}{n-t-1\brack k-t-1}-s \right ) {n-t\brack k-t}+{n-t-1\brack k-t-1}^{2}\\
				&\ -s {t\brack 1}{n-t-1\brack k-t-1}-s^{2} >0.
			\end{aligned}
		\end{equation*}
		as desired.
		
		\medskip
		\noindent{{\bf Case 2.} $k=2t$ and $t\ge 3$.}
		\medskip
		
		For convenience, write
		\begin{equation*}
			\begin{aligned}
				&\	m_{1}(n,k,t)= {n-t\brack k-t}-q^{(k-t)(k+1-t)}{n-k-1\brack k-t}+s, \\
				&\ m_{2}(n,k,t)= {n-t\brack k-t}+ q^{(k-t+1)} \qbinom{t}{1},\\
				&\ g_{3}(n,k,t,s)= {k-t+1\brack 1}{n-t+1\brack k-t+1}-q^{(k-t-1)(k-t-2)+1} {n-t-1\brack k-t-2}{k+1-t\brack 2}+s .
			\end{aligned}
		\end{equation*}
		
		By \cite[Lemma 5.1]{交叉t相交向量4}, we have $m_{1}(n,k,t) \le g_{3}(n,k,t,s)$. And $g_{1} (n,k,t,s)\le m_{1}(n,k,t)m_{2}(n,k,t)\le g_{3}(n,k,t,s)m_{2}(n,k,t)$ . Therefore, we only need to prove $g_{3}(n,k,t,s)m_{2}(n,k,t)<g_{2}(n,k,t)$.
		
		When $t\ge 3$, by Lemma \ref{yinli6.1} (ii) (iii), Lemma \ref{bu} and $n\geq 2k+2t+1+\log_{q} 7s$, we have
		\begin{equation*}
			\begin{aligned}
				&\ {n-t-1\brack t-1}^{2}- \left ( {t+1\brack 1}{n-t-1\brack t-1}+s \right )q^{t+1}{t\brack 1} -s{n-t\brack t} \\
				\ge &\ \left ( q^{2(t-1)(n-2t)}-2q^{t} \cdot \frac{7}{2} q^{(t-1)(n-2t)} +s \right ) q^{t+1}\cdot 2q^{t-1}-\frac{7}{2}s \cdot q^{t(n-2t)}  \\
				= &\ q^{2(t-1)(n-2t)}-14q^{3t+(n-2t)(t-1)} -2sq^{2t}- \frac{7}{2}s \cdot q^{t(n-2t)}  \\
				\ge 	&\ q^{t(n-2t)}\left ( q^{n-2t}-14q^{5t-n} -\frac{7}{2}s  \right )-2sq^{2t}   \\
				> &\ q^{t\left (7t+1+\log_{q}{13s} \right ) }  \left ( q^{7t+1+\log_{q}{13s} }-\frac{14}{q^{\log_{q}{13s}}} -\frac{7}{2}s  \right )-2sq^{2t} \\
				> &\ (13s)^{t} \cdot q^{7t^{2}+t } -2sq^{2t} > 0.
			\end{aligned}
		\end{equation*}

		By \cite[Lemma 5.3]{交叉t相交向量4}, we have
		\begin{equation*}
			\begin{aligned}
		\frac{q^{t} (q^{n-2t}-1 )}{q^{n-t}-1} -1+q^{(t-1)(t-2)+1}\cdot  \frac{q^{t}-1 }{q^{2}-1}\frac{{n-2t-1\brack t-2}}{{n-t-1\brack t-1}}>0
			\end{aligned}
	\end{equation*}
	Then we can obtain
		\begin{equation*}
			\begin{aligned}
				&\ \frac{g_{2}(n,k,t)-g_{3}(n,k,t,s)m_{2}(n,k,t)}{{t+1\brack 1}{n-t-1\brack t-1}{n-t\brack t}}  \\
				= &\ \frac{q^{t} {n-t-1\brack t}}{{n-t\brack t}}  -\left ( 1-\frac{q^{(t-1)(t-2)+1}{n-2t-1\brack t-2} {t+1\brack 2}}{{t+1\brack 1}{n-t-1\brack t-1}} +\frac{s}{{t+1\brack 1}{n-t-1\brack t-1}}  \right )\left ( 1+\frac{q^{t+1}{t\brack 1} }{{n-t\brack t}}  \right )+\frac{{n-t-1\brack t-1}^{2} }{{t+1\brack 1} {n-t-1\brack t-1} {n-t\brack t}}   \\
				\ge &\ \frac{q^{t} (q^{n-2t}-1 )}{q^{n-t}-1} -1+q^{(t-1)(t-2)+1}\cdot  \frac{q^{t}-1 }{q^{2}-1}\frac{{n-2t-1\brack t-2}}{{n-t-1\brack t-1}}-\frac{s}{{t+1\brack 1}{n-t-1\brack t-1} }+\frac{{n-t-1\brack t-1}^{2} }{{t+1\brack 1} {n-t-1\brack t-1} {n-t\brack t}}\\
				&\ -    \left ( 1+\frac{s}{{t+1\brack 1}{n-t-1\brack t-1} }\right ) \frac{q^{t+1}{t\brack 1}}{{n-t\brack t}}  \\
				\ge &\ \frac{{n-t-1\brack t-1}^{2} }{{t+1\brack 1} {n-t-1\brack t-1} {n-t\brack t}} -\left ( 1+\frac{s}{{t+1\brack 1}{n-t-1\brack t-1}}  \right ) \frac{q^{t+1}{t\brack 1}}{{n-t\brack t}}  -\frac{s}{{t+1\brack 1}{n-t-1\brack t-1}}\\
				=&\ \frac{{n-t-1\brack t-1}^{2}- \left ( {t+1\brack 1}{n-t-1\brack t-1}+s \right )q^{t+1}{t\brack 1} -s{n-t\brack t}}{{t+1\brack 1}{n-t-1\brack t-1}{n-t\brack t}} \ge 0.
			\end{aligned}
		\end{equation*}
		Then (ii) holds.
	\end{proof}

	\section{Proof of Theorem \ref{dingli1}}\label{section2}

	Before establishing Theorem \ref{dingli1}, we present a series of  properties associated with s-almost cross-$t$-intersecting families.

	Let $\mf\subseteq{V\b k}$. A subspace $T$  of $V$ is called a \textit{$t$-cover} of $\mf$ if $\dim(T\cap F)\geq t$ for any $F\in \mf$. The family of all $t$-covers with size $\tau_{t}(\mf)$ is denoted by $\mathcal{T}(\mf) $.
	Define the \textit{$t$-covering number} $\tau_{t}(\mf)$ of $\mf$ as the minimum dimension of a $t$-cover of $\mf$. For a subspace $G$ of $V$, write
	$$\mf_{G}=\left\{F\in\mf:G\subseteq F\right\}.$$


	From now on, we assume that $s$-almost cross-$t$-intersecting families are non-empty.
	\begin{lem}\label{yinli2.2}
		Let $n$, $k$, $t$ and $s$ be positive integers satisfying $k\geq t+1$ and $n\geq 2k$. Let $\mathcal{F}, \mathcal{G}\subseteq {V\brack k}$ are $s$-almost cross-$t$-intersecting families with $\tau_{t}(\mathcal{G})\le k$. If $H$ is a non-empty subspace of $V$ with $\dim(H)\le  \tau_{t}(\mathcal{G})$, then 
		$$\left| \mf_{H}\right|\leq {k-t+1\b 1}^{\tau_{t}(\mathcal{G})-\dim(H)}\qbinom{n-\tau_{t}(\mathcal{G})}{k-\tau_{t}(\mathcal{G})}+s\sum_{i=0}^{\tau_{t}(\mg)-\dim(H)-1}{k-t+1\b 1}^{i}.$$
	\end{lem}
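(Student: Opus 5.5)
We argue by downward induction on $\dim(H)$, from $\dim(H)=\tau_t(\mathcal{G})$ down to $\dim(H)=1$. Write $\tau=\tau_t(\mathcal{G})$ and $h=\dim(H)$.

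\emph{Base case $h=\tau$.} Every member of $\mathcal{F}_H$ is a $k$-subspace containing the fixed $\tau$-subspace $H$. Hence $|\mathcal{F}_H|\le {n-\tau\brack k-\tau}$, which is the claimed bound because the sum is empty.

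\emph{Inductive step, $h<\tau$.} Since $\dim(H)<\tau_t(\mathcal{G})$, the subspace $H$ is not a $t$-cover of $\mathcal{G}$. Choose $G_0\in\mathcal{G}$ with $\dim(G_0\cap H)<t$. Split $\mathcal{F}_H$ into two parts:
\[
\mathcal{F}_H\cap\mathcal{D}_{\mathcal{F}}(G_0;t)
\quad\text{and}\quad
\{F\in\mathcal{F}_H:\dim(F\cap G_0)\ge t\}.
\]
The first part has at most $s$ members by the $s$-almost cross-$t$-intersecting condition applied to $G_0$.

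\emph{Bounding the second part.} Take $F$ in the second part. Then $F\cap G_0$ has dimension at least $t$, while $H\cap G_0$ has dimension at most $t-1$. Set $F'=H+(F\cap G_0)\subseteq F$. Its dimension is $h+\dim(F\cap G_0)-\dim(H\cap G_0)$, which is at least $h+1$. We want an $(h+1)$-subspace $H'$ with $H\subseteq H'\subseteq H+G_0$ and $H'\subseteq F$. Such an $H'$ has the form $H+\langle v\rangle$ with $v\in F\cap G_0$ and $v\notin H$.

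\emph{Counting the choices of $H'$.} Counting all $(h+1)$-spaces between $H$ and $H+G_0$ gives ${\dim(H+G_0)-h\brack 1}$, which can be as large as ${k\brack 1}$. That is too many. The coefficient ${k-t+1\brack 1}$ should come from a more economical choice, and this is the main obstacle. Fix a $t$-subspace $T_0$ of $G_0$ with $T_0\supseteq H\cap G_0$ if possible, or more simply use the following count. The space $F\cap G_0$ meets $G_0$ in dimension at least $t$. Inside $G_0/(H\cap G_0)$, which has dimension $k-\dim(H\cap G_0)\ge k-t+1$, the image of $F\cap G_0$ has dimension at least $t-\dim(H\cap G_0)\ge1$.

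To get exactly ${k-t+1\brack 1}$, fix a complement. Choose a subspace $W\subseteq G_0$ of dimension $k-t+1$ with $W\cap (H\cap G_0)=0$, chosen so that every $t$-subspace of $G_0$ meets $H+W$ outside $H$. Any subspace $U\subseteq G_0$ of dimension at least $t$ meets any $(k-t+1)$-subspace $W$ of $G_0$ nontrivially, since $t+(k-t+1)>k$. Take $W$ with $W\cap H=0$; this is possible since $\dim(H\cap G_0)\le t-1$. Then $F\cap G_0\cap W$ contains a nonzero vector $v$, and $v\notin H$. So $H'=H+\langle v\rangle$ ranges over at most ${k-t+1\brack 1}$ subspaces, one for each $1$-subspace of $W$.

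\emph{Closing the induction.} Each $H'$ has dimension $h+1\le\tau$, so the induction hypothesis applies to $\mathcal{F}_{H'}$. This gives
\[
|\mathcal{F}_H|\le s+{k-t+1\brack 1}\Bigl({k-t+1\brack 1}^{\tau-h-1}{n-\tau\brack k-\tau}+s\sum_{i=0}^{\tau-h-2}{k-t+1\brack 1}^i\Bigr),
\]
which rearranges to the stated bound. The step requiring care is the existence of $W$: it needs $\dim(H\cap G_0)\le t-1$ and $\dim G_0=k\ge (k-t+1)+(t-1)$, both of which hold. The hypotheses $\tau_t(\mathcal{G})\le k$ and $n\ge 2k$ only ensure that the binomials are meaningful.
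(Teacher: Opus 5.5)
Your proof is correct, and its skeleton is the paper's. You choose $G_0\in\mathcal{G}$ with $\dim(H\cap G_0)<t$, discard the at most $s$ members of $\mathcal{D}_{\mathcal{F}}(G_0;t)$, enlarge $H$ inside each remaining $F$, and iterate. The difference lies in the enlargement step.

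\emph{The paper's step.} With $m=\dim(H\cap G_0)$, the paper invokes a lemma from the literature that gives a single $(h+t-m)$-space $R\supsetneq H$ with $|\mathcal{F}_H|\le {k-m\brack t-m}|\mathcal{F}_R|+s$. It then weakens ${k-m\brack t-m}\le {k-t+1\brack 1}^{t-m}$. Such jumps can overshoot $\tau=\tau_t(\mathcal{G})$, so the paper finishes with Lemma~\ref{yinli6.1}(v), which is where $n\ge 2k-t+1$ enters. That lemma replaces ${k-t+1\brack 1}^{\dim H_u-\tau}{n-\dim H_u\brack k-\dim H_u}$, where $H_u$ is the last space of its chain, by ${n-\tau\brack k-\tau}$. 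The paper also bounds the accumulated $s$-terms by the full geometric sum.

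\emph{Your step.} You raise the dimension by exactly one per step and prove the covering directly. A $(k-t+1)$-subspace $W\subseteq G_0$ with $W\cap H=0$ exists because $\dim(H\cap G_0)\le t-1$. Every $F$ with $\dim(F\cap G_0)\ge t$ then contains a point of $W$, since $t+(k-t+1)>k$ inside $G_0$.

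\emph{What each route buys.} Your argument is self-contained and needs no cited lemma. Your chain ends exactly at dimension $\tau$, so no overshoot inequality is needed and the hypothesis $n\ge 2k$ plays no role. The paper's route pays one $s$ per jump rather than one per dimension, but it gives that saving back in its final estimate. Both routes yield the identical bound.

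When you write it up, delete the detours in the counting paragraph (the $T_0$ remark and the quotient-space remark) and keep only the $W$ argument.
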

	\begin{proof}

				Let $h$ be a non-negative integer. Suppose that $\mathcal{A}\subseteq {V\brack k}$ and $B\in {V\brack k} $ satisfy $|\mathcal{D}_{\mathcal{A} } (B;t) | \le s$. If $H\subseteq {V\brack h}$ with $\dim (H\cap B)=m$<t, then exists a subspace $R$ of $V$ with $H\subsetneq  R$ such that 
			\begin{equation}\label{DY}
			\left| \mathcal{F}_{H}\right|\leq {k-t+1\brack 1}^{t-m}\left| \mathcal{F}_{R} \right|+s.
			\end{equation}

		If $\mathcal{A} =\mathcal{D}_{\mathcal{A} } (B;t)$, then the required results holds. Next assume $\mathcal{A} \setminus  \mathcal{D}_{\mathcal{A} } (B;t)\ne \emptyset $.
		
		Let $\mi=\ma\bs\md_{\ma}(A;t)$. Then $\dim\left ( B\cap I \right ) \ge t$ for any $I\in \mi$. By \cite[Lemma 2.4]{交叉t相交向量4}, there exists an $(h+t-m)$-subspace $R$ with $H\subseteq R$ such that 
		\begin{equation}\label{shi1}
			\left| \mathcal{I}_{H}\right|\leq {k-m\brack t-m}\left| \mathcal{I}_{R} \right|\leq {k-m\brack t-m}\left| \mathcal{F}_{R} \right|.
		\end{equation}
		
		The fact $\mf_{A}\bs \mi_{H}\subseteq \mf\bs \mi= \mathcal{D}_{\mathcal{A} } (B;t)$ implies
		\begin{equation}\label{shi2}
			\left|\ma_{H}\right|\leq \left|\mi_{H}\right|+s.
		\end{equation}

		It is routine to verify that
		$$\frac{q^{k-m}-1}{q^{t-m}-1}\leq \frac{q^{k-m-1}-1}{q^{t-m-1}-1}\leq \cdots\leq \frac{q^{k-t+1}-1}{q-1}.$$
		Hence  ${k-m\b t-m}\leq {k-t+1\b 1}^{t-m}$.
		Then from (\ref{shi1}) and (\ref{shi2}), the (\ref{DY}) holds.

		We now prove the Lemma \ref{yinli2.2}. If $\dim(H)=\tau_{t}(\mathcal{G})$, then the desired result follows from $\left | \mathcal{F}_{H}   \right | \le \qbinom{n-\dim(H)}{k-\dim(H)}=\qbinom{n-\tau_{t}(\mathcal{G}}{k-\tau_{t}(\mathcal{G}}$. Next assume $\dim(H)<\tau_{t}(\mathcal{G})$.
		
		From $\dim(H)<\tau_{t}(\mathcal{G})$, we know $\dim(H\cap G_{1})< t$ for some $ G_{1}\in \mg$. Since $\mathcal{F}$ and $\mathcal{G}$ are $s$-almost cross-$t$-intersecting families, we have $\left | \mathcal{D}_{F} (G;t)  \right | \le s$. By (\ref{shi1}), there exists a subspace $H_{2}$ of $V$ with $H\subsetneq H_{2}$ such that 
		$$\left|\mf_{H}\right|\leq {k-t+1\b 1}^{\dim(H_{2})-\dim(H)}\left|\mf_{H_{2}}\right|+s.$$
		 Using (\ref{DY}) repeatedly,
		we finally get an ascending chain of subspaces $H=:H_{1}\subsetneq H_{2}\subsetneq\ldots \subsetneq H_{u}$  such that 
		$\dim(H_{u-1})<\tau_{t}(\mathcal{F})\leq \dim(H_{u})$ and 
		$$\left|\mf_{H_{i}}\right|\leq {k-t+1\brack 1}^{\dim(H_{i+1})-\dim(H_{i})}\left| \mf_{H_{i+1}}\right|+s, \quad 1\leq i\leq u-1.$$
		This implies 
		\begin{equation*}
			\left| \mf_{H}\right|\leq {k-t+1\brack 1}^{\dim(H_{u})-\dim(H)}\left| \mf_{H_{u}}\right|+s\sum_{i=1}^{u-1}{k-t+1\brack 1}^{\dim(H_{i})-\dim(H)}.
		\end{equation*}
		
	Note that
			\begin{equation*}
		s\sum_{i=1}^{u-1}{k-t+1\brack 1}^{\dim(H_{i})-\dim(H)}\le   s\sum_{i=1}^{\tau_{t}(\mathcal{G} )-\dim(H)  -1}{k-t+1\brack 1}^{i}.
	\end{equation*}
		
		By lemma \ref{yinli6.1}(v), we have
			\begin{equation*}
					\begin{aligned}
		\left| \mf_{H}\right|\leq &\ {k-t+1\b 1}^{\dim(H_{u})-\dim(H)}\qbinom{n-\dim(H_{u})}{k-\dim(H_{u})}+s\sum_{i=0}^{\tau_{t}(\mg)-\dim(H)-1}{k-t+1\b 1}^{i}\\
		\leq &\ {k-t+1\b1}^{\tau_{t}(\mathcal{G})-\dim(H)}\qbinom{n-\tau_{t}(\mathcal{G})}{k-\tau_{t}(\mathcal{G})}+s\sum_{i=0}^{\tau_{t}(\mg)-\dim(H)-1}{k-t+1\b 1}^{i}.
			\end{aligned}
	\end{equation*}
		The desired result follows.
	\end{proof}
	
In the following, we establish some upper bounds for the product of sizes of $s$-almost cross-$t$-intersecting families in terms of their $t$-covering numbers.

	 \begin{lem}\label{yinli2.3}
		Let $n$, $k$, $t$ and $s$ be positive integers satisfying $k\geq t+1$ and $n\geq 2k$. Let $\mathcal{F}, \mathcal{G}\subseteq {V\brack k}$ are $s$-almost cross-$t$-intersecting families. If $\tau _{t}(\mathcal{F} )\le k$ and $\tau _{t}(\mathcal{G} )\le k$, then
		$$\left | \mathcal{F}  \right |  \left | \mathcal{G}  \right |   \le f_{1}(n,k,t,s,\tau _{t}(\mathcal{F} ) ) f_{1}(n,k,t,s,\tau _{t}(\mathcal{G} ) ) .$$
	\end{lem}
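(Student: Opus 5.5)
The plan is to bound each family separately by covering it with the stars of a minimum $t$-cover of the \emph{other} family, and then to apply Lemma~\ref{yinli2.2}. Without loss of generality the roles are symmetric, so it suffices to show
$$|\mathcal{F}|\le f_{1}(n,k,t,s,\tau_{t}(\mathcal{G})),\qquad |\mathcal{G}|\le f_{1}(n,k,t,s,\tau_{t}(\mathcal{F})).$$
Multiplying the two bounds gives the statement. Note that it does not matter which factor is paired with which, since the product is symmetric in the two covering numbers.

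To bound $|\mathcal{F}|$, set $x=\tau_{t}(\mathcal{G})\le k$ and choose $T\in\mathcal{T}(\mathcal{G})$, so $\dim T=x$ and $\dim(T\cap G)\ge t$ for all $G\in\mathcal{G}$.

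1. I want every $F\in\mathcal{F}$ to meet $T$ in dimension at least $t$. This is the delicate step, because $\mathcal{F}$ need not be $t$-covered by $T$. I would argue instead that any $F\in\mathcal{F}$ with $\dim(F\cap T)<t$ can be handled by the mechanism already inside the proof of Lemma~\ref{yinli2.2}, namely inequality~(\ref{DY}) applied with $H=\{0\}$-type starting data.
2. The cleaner route, which I would try first, is to sidestep this: partition $\mathcal{F}$ according to $E=F\cap T$ when $\dim(F\cap T)\ge t$, and otherwise absorb $F$ into the $s$ exceptional members. I would then check that the structure of the bound in Lemma~\ref{yinli2.2} (its additive $s$-terms) is designed precisely to absorb these.
3. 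Concretely, for every $F$ with $\dim(F\cap T)\ge t$ there is some $E\in{T\brack t}$ with $E\subseteq F$. Hence
$$|\mathcal{F}|\le \sum_{E\in{T\brack t}}|\mathcal{F}_{E}| + (\text{exceptions}).$$
4. Apply Lemma~\ref{yinli2.2} with $H=E$, $\dim H=t\le \tau_t(\mathcal{G})$. This gives
$$|\mathcal{F}_E|\le {k-t+1\brack 1}^{x-t}{n-x\brack k-x}+s\sum_{i=0}^{x-t-1}{k-t+1\brack 1}^i.$$
5. Summing over the ${x\brack t}$ choices of $E$ gives exactly $f_1(n,k,t,s,x)$ from~(\ref{f1}).

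The main obstacle is the members of $\mathcal{F}$ meeting $T$ in dimension less than $t$. My first attempt would be to show that $T$ is in fact a $t$-cover of $\mathcal{F}$ as well, or that these members are few. Since $\mathcal{F}$ and $\mathcal{G}$ are only $s$-almost cross-intersecting, $T$ being a cover of $\mathcal{G}$ says nothing directly about $\mathcal{F}$. So I would instead run the chain argument of Lemma~\ref{yinli2.2} starting from $H=\{0\}$ (dimension $0$), i.e.\ bound $|\mathcal{F}|=|\mathcal{F}_{\{0\}}|$ directly. The first step of the chain uses some $G_1\in\mathcal{G}$ and the fact that all but $s$ members of $\mathcal{F}$ meet $G_1$ in dimension at least $t$, which produces the sum over $t$-subspaces.

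If the constants do not match exactly, I would compare with $f_1$ using ${k\brack t}\le{k-t+1\brack 1}^t$-type estimates as in Lemma~\ref{yinli6.1}(v). In that case I would also take $T$ itself as the object to which all $t$-subspaces $E$ belong, choosing $G_1$ inside the argument so that $E\subseteq T$ holds. This is the step I expect to require the most care.
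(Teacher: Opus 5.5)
There is a genuine gap: the per-family inequalities you aim for, $|\mathcal{F}|\le f_1(n,k,t,s,\tau_t(\mathcal{G}))$ and $|\mathcal{G}|\le f_1(n,k,t,s,\tau_t(\mathcal{F}))$, are false. No treatment of the ``exceptions'' can rescue them.

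Take $\mathcal{G}=\{G\}$ with $G\in{V\brack k}$, and $\mathcal{F}=\{F\in{V\brack k}:\dim(F\cap G)\ge t\}$. These families are cross-$t$-intersecting, hence $s$-almost cross-$t$-intersecting. Moreover $\tau_t(\mathcal{G})=t$, and $\tau_t(\mathcal{F})\le k$ because $G$ is a $t$-cover of $\mathcal{F}$. Here $f_1(n,k,t,s,t)={n-t\brack k-t}$. Pick distinct $E_1,E_2\in{G\brack t}$, which exist since $k\ge t+1$. All $k$-spaces through $E_1$ or $E_2$ lie in $\mathcal{F}$, so $|\mathcal{F}|\ge 2{n-t\brack k-t}-{n-d\brack k-d}>{n-t\brack k-t}$ with $d=\dim(E_1+E_2)\ge t+1$.

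In this example your $T\in\mathcal{T}(\mathcal{G})$ is a $t$-subspace of $G$, and almost all of $\mathcal{F}$ meets it in dimension less than $t$. The $s$-almost condition controls $t$-disjointness from members of $\mathcal{G}$, not from a cover of $\mathcal{G}$, so these members are not ``$s$ exceptions''. Running the chain from the zero subspace fails for the same reason. It yields the factor ${k-t+1\brack 1}^{t}\ge{k\brack t}\ge{\tau_t(\mathcal{G})\brack t}$ in place of ${\tau_t(\mathcal{G})\brack t}$, so the comparison with $f_1$ goes the wrong way.

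The missing idea is to cover each family by stars centred in its \emph{own} minimal $t$-cover. Lemma~\ref{yinli2.2} then controls each star through the \emph{other} family's covering number. For $T_f\in\mathcal{T}(\mathcal{F})$, every $F\in\mathcal{F}$ contains some $H\in{T_f\brack t}$, with no exceptions, so $\mathcal{F}=\bigcup_{H\in{T_f\brack t}}\mathcal{F}_H$. Your step 4, applied to each star, gives $|\mathcal{F}|\le{\tau_t(\mathcal{F})\brack t}\Phi(\tau_t(\mathcal{G}))$, where $\Phi(x)={k-t+1\brack 1}^{x-t}{n-x\brack k-x}+s\sum_{i=0}^{x-t-1}{k-t+1\brack 1}^{i}$, so that $f_1(n,k,t,s,x)={x\brack t}\Phi(x)$. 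Symmetrically, $|\mathcal{G}|\le{\tau_t(\mathcal{G})\brack t}\Phi(\tau_t(\mathcal{F}))$.

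Neither bound is an $f_1$-value on its own. In the product, however, each Gaussian coefficient regroups with the matching $\Phi$ factor to give exactly $f_1(n,k,t,s,\tau_t(\mathcal{F}))\,f_1(n,k,t,s,\tau_t(\mathcal{G}))$. This cross-pairing, rather than a symmetric per-family bound, is what the paper's proof does.
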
	
	\begin{proof}
		Let $T_{f}$ and $T_{g}$ are minimal $t$-covers of $\mathcal{F}$ and $\mathcal{G}$, respectively. Then $\mathcal{F}= {\textstyle \bigcup_{H\in {T_{f} \brack t}}}  \mathcal{F} _{H} $ and $\mathcal{G}= {\textstyle \bigcup_{H\in {T_{g} \brack t}}}  \mathcal{G} _{H} $. It follows from Lemma \ref{yinli2.2} that
		$$\left | \mathcal{F}  \right | \le {\tau _{t}  (\mathcal{F} )\brack t} \left ( {k-t+1\brack 1} ^{\tau _{t}  (\mathcal{G} )-t} {n-\tau _{t}  (\mathcal{G} )\brack k-\tau _{t}  (\mathcal{G} )}+s \sum_{i=0}^{\tau _{t}  (\mathcal{G} )-t-1} {k-t+1\brack 1}^{i} \right ) $$
		$$\left | \mathcal{G}  \right | \le {\tau _{t}  (\mathcal{G} )\brack t} \left ( {k-t+1\brack 1} ^{\tau _{t}  (\mathcal{F} )-t} {n-\tau _{t}  (\mathcal{F} )\brack k-\tau _{t}  (\mathcal{F} )}+s \sum_{i=0}^{\tau _{t}  (\mathcal{F} )-t-1} {k-t+1\brack 1}^{i} \right ) .$$
The desired result holds.			
	\end{proof}

	 \begin{lem}\label{yinli2.4}
		Let $n$, $k$, $t$ and $s$ be positive integers satisfying $k\geq t+1$ and $n\geq 2k$. If $\mathcal{F},\mathcal{G}\subseteq {V\brack k}$ are $s$-almost cross-$t$-intersecting families with $\tau_t(\mg)\geq k+1$, then 
		$$|\mf||\mg|\leq s{k\brack t}{n-t\brack k-t}\binom{2k-2t+2}{k-t+1} +s^{2} \binom{2k-2t+2}{k-t+1}.$$
	\end{lem}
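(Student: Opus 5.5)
The plan is to split the product into a cheap factor and a bounded factor. I would show separately that
$$|\mf|\le {k\brack t}{n-t\brack k-t}+s \qquad\text{and}\qquad |\mg|\le s\binom{2k-2t+2}{k-t+1}.$$
Multiplying gives exactly the stated bound:
$$|\mf||\mg|\le s{k\brack t}{n-t\brack k-t}\binom{2k-2t+2}{k-t+1}+s^{2}\binom{2k-2t+2}{k-t+1}.$$

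The first inequality is easy. Fix any $G_{1}\in\mg$. Every $F\in\mf\setminus\md_{\mf}(G_{1};t)$ satisfies $\dim(F\cap G_{1})\ge t$, so it contains some $t$-subspace of $G_{1}$. Hence
$$\mf\setminus\md_{\mf}(G_{1};t)\subseteq\bigcup_{H\in{G_{1}\brack t}}\mf_{H},$$
and each $\mf_{H}$ has at most ${n-t\brack k-t}$ members. Since $|\md_{\mf}(G_{1};t)|\le s$, this gives $|\mf|\le{k\brack t}{n-t\brack k-t}+s$.

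The second inequality, a bound on $|\mg|$ independent of $n$, is the heart of the proof. The hypothesis $\tau_t(\mg)\ge k+1$ says that no $k$-subspace, in particular no member of $\mf$, is a $t$-cover of $\mg$. I would run a branching (search-tree) argument alternating between the two families, in the spirit of the proof of (\ref{DY}). Each node carries a pair of subspaces $(P,Q)$ with $P\subseteq F$ for some current $F\in\mf$ and $Q\subseteq G$ for the members $G$ still under consideration.

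1. At a node, pick $F\in\mf$ and a witness $G'\in\mg$ with $\dim(F\cap G')<t$; such a witness exists because $F$ is not a $t$-cover of $\mg$.
2. The members of $\mg$ that fail to $t$-intersect $F$ number at most $s$; these are charged to the current node.
3. The remaining members $t$-intersect $F$ but behave differently from $G'$. This forces a strict increase of either the "$\mf$-side" dimension or the "$\mg$-side" dimension of the constrained part.
4. Each of the two coordinates can increase at most $k-t+1$ times before the constrained subspace reaches dimension $k$ and pins down a unique member.
5. Root-to-leaf paths therefore correspond to lattice paths with at most $k-t+1$ steps of each kind. Their number, and the number of nodes charged $s$ each, is at most $\binom{2k-2t+2}{k-t+1}$.

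Adding up the charges along the tree yields $|\mg|\le s\binom{2k-2t+2}{k-t+1}$.

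The main obstacle is making the tree count honest. In vector spaces a single branching step naturally splits into Gaussian-many subspaces rather than two. The fix I would try first is to make the choice of the next subspace canonical: always extend by the subspace spanned with the intersection of the witness, as in \cite[Lemma 2.4]{交叉t相交向量4}. Then each level contributes only a binary choice (advance on the $\mf$-side or on the $\mg$-side), and no ${k-t+1\brack 1}$ factors appear. If that fails, I would absorb such factors into the first bound on $|\mf|$ rather than into $|\mg|$, aiming for the same product.
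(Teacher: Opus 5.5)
\textbf{There is a genuine gap: you put the two bounds on the wrong families.} The bound that carries all the weight, $|\mathcal{G}|\le s\binom{2k-2t+2}{k-t+1}$, is false under the hypotheses.

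Take $s\ge 1$, a fixed $F_0\in{V\brack k}$, and set
\[
\mathcal{F}=\{F_0\},\qquad \mathcal{G}=\{G\in{V\brack k}:\dim(G\cap F_0)\ge t\}\cup\{G'\},
\]
where $G'\cap F_0=0$ (possible since $n\ge 2k$).
\begin{itemize}
\item[(a)] The pair is $s$-almost cross-$t$-intersecting: $|\mathcal{D}_{\mathcal{G}}(F_0;t)|=1$, and $|\mathcal{D}_{\mathcal{F}}(G;t)|\le 1$ for every $G\in\mathcal{G}$.
\item[(b)] $\tau_t(\mathcal{G})\ge k+1$. First, $F_0$ is not a $t$-cover because of $G'$. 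Any other $T$ with $\dim T\le k$ cannot contain $F_0$, so there is $E\in{F_0\brack t}$ with $E\not\subseteq T$. Since $n\ge 2k$, one can choose a $(k-t)$-space $C$ with $C\cap(E+T)=0$. Then $G=E\oplus C$ lies in $\mathcal{G}$, but $G\cap T=E\cap T$ has dimension $<t$, so $T$ is not a $t$-cover.
\item[(c)] Nevertheless $|\mathcal{G}|\ge{n-t\brack k-t}$, which is unbounded in $n$. The product bound holds here only because $|\mathcal{F}|=1$.
\end{itemize}
In your search tree, step 3 is exactly where this breaks. After discarding the at most $s$ members of $\mathcal{D}_{\mathcal{G}}(F;t)$, the remaining members of $\mathcal{G}$ all $t$-intersect $F$. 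Nothing forces them to be few or to ``advance'' any coordinate.

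The hypothesis $\tau_t(\mathcal{G})\ge k+1$ gives information about $\mathcal{F}$, not $\mathcal{G}$: every $F\in\mathcal{F}$ has $\mathcal{D}_{\mathcal{G}}(F;t)\neq\emptyset$. The paper therefore uses the opposite assignment.
\begin{itemize}
\item[(1)] It bounds $|\mathcal{G}|\le{k\brack t}{n-t\brack k-t}+s$ by fixing some $F\in\mathcal{F}$. This is your first argument with $\mathcal{F}$ and $\mathcal{G}$ swapped.
\item[(2)] It bounds $|\mathcal{F}|$ greedily. Start with $V_1=\mathcal{F}$. Pick $F_i\in V_i$ and a witness $G_i\in\mathcal{D}_{\mathcal{G}}(F_i;t)$, and set $V_{i+1}=V_i\setminus\mathcal{D}_{\mathcal{F}}(G_i;t)$.
\item[(3)] This yields pairs with $\dim(F_i\cap G_i)<t$, with $\dim(F_i\cap G_j)\ge t$ for $j<i$, and with $\mathcal{F}=\bigcup_i\mathcal{D}_{\mathcal{F}}(G_i;t)$. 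Hence $|\mathcal{F}|\le sm$.
\item[(4)] The constant $\binom{2k-2t+2}{k-t+1}$ bounds $m$. It comes from the skew Bollob\'as-type theorem for such pairs of subspaces, which the paper cites.
\end{itemize}
Your lattice-path count does not replace step (4). You never say what the two coordinates are, or why a canonical extension produces a binary branching. The natural branching in the style of (\ref{DY}) introduces ${k-t+1\brack 1}$-factors that do not collapse to $\binom{2k-2t+2}{k-t+1}$. So even with the roles corrected, the bound on $m$ needs the set-pair theorem (an exterior-algebra type argument), not the search tree as sketched.
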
	
	\begin{proof}
		Set $V_{1}=\mf$. Choose $F_{i}$, $G_{i}$ and $V_{i+1}$ by repeating the following steps:
		$$F_{i}\in V_{i},\ G_{i}\in\md_{\mg}(F_{i};t),\ V_{i+1}=V_{i}\bs\md_{\mf}(G_{i};t).$$
		Since $\left| V_{i+1}\right|< \left| V_{i}\right|$, there exists a positive integer $m$ such that $V_{m+1}=\emptyset$.	Finally, we  get two sequences of $k$-subspaces $F_{1},F_{2},\ldots,F_{m}$ and $G_{1}, G_{2},\ldots,G_{m}$ such that
		\begin{enumerate}[\normalfont(i)]
			\item $\dim(F_{i}\cap G_{i})<t$ for any $1\le i\le m$;
			\item $\dim(F_{i}\cap G_{j})\geq t$ for any $1\le j<i\le m$;
			\item $\mf=\cup _{i=1}^{m}\md_{\mf}(G_{i};t)$.
		\end{enumerate}

		By \cite[Theorem 5]{2406291}, two sequences of $k$-subspaces $F_{1},F_{2},\ldots,F_{m}$ and $G_{1}, G_{2},\ldots,G_{m}$ satisfying  (i) and (ii) must have $m\leq \binom{2k-2t+2}{k-t+1}$.	Notice that $\mf$ is $s$-almost $t$-intersecting. 
		From (iii), we obtain 
		$$\left|\mf\right|\leq \sum_{i=1}^{m}\left|\md_{\mf}(B_{i};t)\right|\leq s \binom{2k-2t+2}{k-t+1}.$$
		Let $F\in \mf$, then $F$ is $t$-cover of $ \mathcal{G} \setminus \mathcal{D_{G} } (F;t)$. We further derive $ \mathcal{G} \setminus \mathcal{D_{G} } (F;t)\subseteq  {\textstyle \bigcup_{H\in {F\brack t}}}\mathcal{G }_{H} $, which implies
		$$\left | \mathcal{G}  \right |  =| \mathcal{G} \setminus \mathcal{D_{G} } (F;t)|+|\mathcal{D_{G} } (F;t)|\le {k\brack t}{n-t\brack k-t}+s.$$
		This together with the upper bound of $|\mf|$ yields the desired results.
	\end{proof}

		 \begin{lem}\label{yinli2.5}
		Let $n$, $k$, $t$ and $s$ be positive integers satisfying $k\geq t+1$ and $n\geq 2k+2t+1+\log_{q} 7s$. Suppose that $\mathcal{F},\mathcal{G}\subseteq {V\brack k}$ are $s$-almost cross-$t$-intersecting families. If $\left ( \tau _{t} (\mathcal{F} ),\tau _{t} (\mathcal{G} ) \right ) \ne (t,t)$, then $|\mf||\mg|<{n-t\brack k-t}^{2}$.
	\end{lem}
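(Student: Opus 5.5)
We split into two cases according to whether some covering number exceeds $k$ or not. Without loss of generality we may suppose $\tau_t(\mathcal{F})\le\tau_t(\mathcal{G})$; the roles of $\mathcal{F}$ and $\mathcal{G}$ are symmetric in the definition.

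\textbf{Case A: both $\tau_t(\mathcal{F}),\tau_t(\mathcal{G})\le k$.}
By Lemma \ref{yinli2.3},
$$|\mathcal{F}||\mathcal{G}|\le f_1(n,k,t,s,\tau_t(\mathcal{F}))\,f_1(n,k,t,s,\tau_t(\mathcal{G})).$$
Since $(\tau_t(\mathcal{F}),\tau_t(\mathcal{G}))\ne(t,t)$, one factor has argument $x\ge t+1$ and the other has argument $\ge t$.

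First we dispose of the value $\tau=k$. Lemma \ref{yinli6.2}(i) only covers $x\le k-1$, so we handle $x=k$ directly. There
$$f_1(n,k,t,s,k)={k\brack t}{k-t+1\brack 1}^{k-t}+s{k\brack t}\sum_{i=0}^{k-t-1}{k-t+1\brack 1}^i,$$
which is a constant independent of $n$. It is therefore much smaller than $f_1(n,k,t,s,k-1)$, which contains the factor ${n-k+1\brack 1}$. This follows from the same ratio computation as in Lemma \ref{yinli6.2}(i), or by a direct check using $n\ge 2k+2t+1+\log_q 7s$. Hence $f_1$ is decreasing on $\{t,\dots,k\}$.

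Consequently
$$|\mathcal{F}||\mathcal{G}|\le f_1(n,k,t,s,t)\,f_1(n,k,t,s,t+1).$$
Here $f_1(n,k,t,s,t)={n-t\brack k-t}$, and
$$f_1(n,k,t,s,t+1)={t+1\brack t}{k-t+1\brack 1}{n-t-1\brack k-t-1}+s{t+1\brack 1}.$$
It remains to show that this last quantity is less than ${n-t\brack k-t}$. Using Lemma \ref{yinli6.1}(ii), we have ${n-t\brack k-t}/{n-t-1\brack k-t-1}>q^{n-k}$. On the other hand, ${t+1\brack 1}{k-t+1\brack 1}\le 4q^{k}$ by Lemma \ref{yinli6.1}(i). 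Since $q^{n-k}\ge q^{k+2t+1}\cdot 7s$, the main term is well below ${n-t\brack k-t}$. The $s$-term is negligible by the same margin.

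\textbf{Case B: $\tau_t(\mathcal{G})\ge k+1$ (or symmetrically $\tau_t(\mathcal{F})\ge k+1$).}
Lemma \ref{yinli2.4} gives
$$|\mathcal{F}||\mathcal{G}|\le s\binom{2k-2t+2}{k-t+1}\left({k\brack t}{n-t\brack k-t}+s\right).$$
We compare this with ${n-t\brack k-t}^2$. By Lemma \ref{yinli6.2}(ii) with $l=1$, we have $s{k\brack t}\binom{2k-2t+2}{k-t+1}<\tfrac67 f_1(n,k,t,s,k-1)$. By the monotonicity in (i), $f_1(n,k,t,s,k-1)\le f_1(n,k,t,s,t)={n-t\brack k-t}$. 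Hence the main term is less than $\tfrac67{n-t\brack k-t}^2$.

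The remaining term $s^2\binom{2k-2t+2}{k-t+1}$ is tiny compared with $\tfrac17{n-t\brack k-t}^2$. Indeed, $\binom{2k-2t+2}{k-t+1}\le 4^{k-t+1}$ while ${n-t\brack k-t}\ge q^{n-k}\ge 7s\,q^{k+2t+1}$. This completes Case B.

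\textbf{Main obstacle.} The delicate point is Case A: the endpoint $x=k$, which lies outside the range of Lemma \ref{yinli6.2}(i), and the precise verification that $f_1(n,k,t,s,t+1)<{n-t\brack k-t}$. I expect the latter to go through via the ratio bound
$$\frac{{t+1\brack 1}{k-t+1\brack 1}{n-t-1\brack k-t-1}}{{n-t\brack k-t}}\le\frac{4q^{k}}{q^{n-k}}\le\frac{4}{7s\,q^{2t+1}}<\frac12,$$
after which the additive $s{t+1\brack 1}$ term is easily absorbed.
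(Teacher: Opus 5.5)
Your proof is correct and follows the paper's argument. When both covering numbers are at most $k$, you combine Lemma~\ref{yinli2.3} with the monotonicity of $f_1$ to get $f_1(n,k,t,s,t)f_1(n,k,t,s,t+1)<{n-t\brack k-t}^2$; when one covering number is at least $k+1$, you combine Lemma~\ref{yinli2.4} with Lemma~\ref{yinli6.2}(ii) at $l=1$. Your separate treatment of the endpoint $x=k$, your direct check that $f_1(n,k,t,s,t+1)<{n-t\brack k-t}$, and your direct bound on the $s^2$ term are harmless extra care (the ratio computation in the proof of Lemma~\ref{yinli6.2}(i) already covers the step from $x=k-1$ to $x=k$).
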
	
	\begin{proof}
		Based on the $t$-covering numbers of $\mf$ and $\mg$, we divide our proof into two cases.
		
		\medskip
		\noindent \textbf{Case 1.} $\tau_{t}(\mathcal{F})\leq k$ and $\tau_{t}(\mathcal{G})\leq k$. 
		\medskip
		
		By Lemma \ref{yinli2.3}, we have
		$$\left | \mathcal{F}  \right |  \left | \mathcal{G}  \right |   \le f_{1}(n,k,t,s,\tau _{t}(\mathcal{F} )  f_{1}(n,k,t,s,\tau _{t}(\mathcal{G} ) ) .$$
		This together with $\left ( \tau _{t} (\mathcal{F} ),\tau _{t} (\mathcal{G} ) \right ) \ne (t,t)$ and Lemma \ref{yinli6.2} (i) yields
		$$\left | \mathcal{F}  \right |  \left | \mathcal{G}  \right |   \le f_{1}(n,k,t,s,t ) f_{1}(n,k,t,s,t+1)<f_{1}(n,k,t,s,t )^{2}={n-t\brack k-t}^{2},$$
		as desired.

		\medskip
\noindent \textbf{Case 2.} $\tau_{t}(\mathcal{F})\ge k+1$ or $\tau_{t}(\mathcal{G})\ge k+1$. 
\medskip

W.l.o.g., we may assume that $\tau_{t}(\mathcal{G})\ge k+1$. From Lemma \ref{yinli2.4}, we obtain
$$|\mf||\mg|\leq s{k\brack t}{n-t\brack k-t}\binom{2k-2t+2}{k-t+1} +s^{2} \binom{2k-2t+2}{k-t+1}.$$ It follows from Lemma \ref{yinli6.2} (i) and (ii) that
$$\frac{6}{7}f_{1}(n,k,t,s,t){n-t\brack k-t}+\frac{6s}{7}f_{1}(n,k,t,s,t) =\frac{6}{7}{n-t\brack k-t}^{2}+\frac{6s}{7}{n-t\brack k-t}.$$
This combining with $6s<7s=q^{\log_{q}7s}  <q^{(k-t)(n-k)} <{n-t\brack k-t}$ yields $|\mf||\mg|<{n-t\brack k-t}^{2}$.
	\end{proof}
	
	To characterize the maximal $s$-almost cross-$t$-intersecting families, we proceed by proving a property of their minimum t-covers.

	\begin{lem}\label{yinli2.6}
		Let $n$, $k$ and $t$ be positive integers with $k\geq t+1$ and $n\geq 2k+s-1$. Suppose that $\mathcal{F},\mathcal{G}\subseteq {V\brack k}$ are maximal $s$-almost cross-$t$-intersecting families. If $ \tau _{t} (\mathcal{F} )\le k, \tau _{t} (\mathcal{G} ) \le k$, then $\mathcal{T}  (\mathcal{F} )$ and $\mathcal{T}  (\mathcal{G} )$ are cross-$t$-intersecting.
	\end{lem}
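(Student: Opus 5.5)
Throughout, ``maximal'' is taken to mean that no further $k$-subspace can be added to $\mathcal{F}$ or to $\mathcal{G}$ without destroying the $s$-almost cross-$t$-intersecting property. The proof will be by contradiction, in the style of the analogous lemma for cross-$t$-intersecting families in \cite{交叉t相交向量4}. Write $\tau_f=\tau_t(\mathcal{F})$ and $\tau_g=\tau_t(\mathcal{G})$. Suppose there exist $T_f\in\mathcal{T}(\mathcal{F})$ and $T_g\in\mathcal{T}(\mathcal{G})$ with $\dim(T_f\cap T_g)<t$. The aim is to build a $k$-subspace that can be added to one of the families without breaking the $s$-almost cross-$t$-intersecting property, contradicting maximality.

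\textbf{Step 1: find a candidate subspace.} Since $\tau_f,\tau_g\le k$, we may choose $F_0\in{V\brack k}$ with $T_f\subseteq F_0$ and $\dim(F_0\cap T_g)\ge t$. To do this, extend $T_f$ by a complement of $T_f\cap T_g$ inside some $t$-subspace of $T_g$. This requires $\tau_f+t-\dim(T_f\cap T_g)\le k$, and is the first place where a dimension count is needed. If the count fails, instead take $F_0\supseteq T_g$ meeting $T_f$ suitably, or use $\dim$ bounds derived from $n\ge 2k+s-1$. Because $T_g$ is a $t$-cover of $\mathcal{G}$ and $F_0\supseteq$ a $t$-subspace of $T_g$, not every member of $\mathcal{G}$ need meet $F_0$ in dimension $t$. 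So the real plan is to choose $F_0\supseteq T_g$, which guarantees $\dim(F_0\cap G)\ge t$ for all $G\in\mathcal{G}$, possible as $\tau_g\le k$. We also want $F_0\notin\mathcal{F}$. Here the hypothesis $\dim(T_f\cap T_g)<t$ should be used: if every $k$-space containing $T_g$ were already in $\mathcal{F}$, then $\mathcal{F}\supseteq\mathcal{F}_{T_g}={V\brack k}_{T_g}$. In that case the many members through $T_g$, which can avoid $T_f$ in dimension $\ge t$ when $n\ge 2k+s-1$, would contradict $T_f$ being a $t$-cover of $\mathcal{F}$. Concretely, one counts $k$-spaces $F\supseteq T_g$ with $\dim(F\cap T_f)=\dim(T_g\cap T_f)<t$ and shows there is at least one, using $n-\tau_g\ge k-\tau_g+\tau_f$.

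\textbf{Step 2: check that adding $F_0$ keeps the property.} Set $\mathcal{F}'=\mathcal{F}\cup\{F_0\}$. First, $\mathcal{D}_{\mathcal{G}}(F_0;t)=\emptyset$ since $T_g\subseteq F_0$. Second, for each $G\in\mathcal{G}$ we have $\mathcal{D}_{\mathcal{F}'}(G;t)=\mathcal{D}_{\mathcal{F}}(G;t)$, because $F_0$ is not $t$-disjoint from $G$. Hence $\mathcal{F}'$ and $\mathcal{G}$ are still $s$-almost cross-$t$-intersecting, and $\mathcal{F}'\supsetneq\mathcal{F}$, contradicting maximality.

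\textbf{Main obstacle.} Step 2 shows that under maximality $\mathcal{F}$ must already contain every $k$-space through $T_g$, and symmetrically $\mathcal{G}$ contains every $k$-space through $T_f$. So the real content, and the expected obstacle, is to show this forces $\dim(T_f\cap T_g)\ge t$. Take $F\supseteq T_g$ with $\dim(F\cap T_f)=\dim(T_g\cap T_f)<t$. Such an $F$ lies in $\mathcal{F}$, yet it is not $t$-covered by $T_f$, which is a contradiction. The existence of such an $F$ needs a complement of $T_g+T_f$ of sufficient dimension. This holds when $n\ge \tau_f+k$, which follows from $n\ge 2k+s-1\ge 2k$. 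I would verify this dimension count carefully, choosing $F=T_g\oplus U$ with $U\cap(T_g+T_f)=0$ and $\dim U=k-\tau_g$, and checking that then $F\cap T_f=T_g\cap T_f$.
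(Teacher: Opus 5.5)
Your argument is correct, and it is shorter than the paper's. The core of your proof is Step 2 together with your last paragraph. Maximality forces every $k$-space containing $T_g$ into $\mathcal{F}$. The space $F=T_g\oplus U$ with $U\cap(T_f+T_g)=0$ satisfies $F\cap T_f=T_f\cap T_g$, so $F\in\mathcal{F}$ contradicts $T_f$ being a $t$-cover of $\mathcal{F}$. The dimension count only needs $n\ge k+\tau_t(\mathcal{F})-\dim(T_f\cap T_g)$, which $n\ge 2k$ already guarantees.

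The paper builds the same kind of space: its $U$ has $U\cap(T_f+T_g)=T_g$, and it also places $U$ in $\mathcal{F}$ by maximality. It does not then use the cover property of $T_f$ against $U$. Instead it introduces $\mathcal{W}=\{W\in{V\brack k}: W\cap(U+T_f)=T_f\}$ and places $\mathcal{W}$ in $\mathcal{G}$ by maximality. Each $W\in\mathcal{W}$ meets $U$ only in $T_f\cap T_g$. The hypothesis $n\ge 2k+s-1$ then gives $|\mathcal{W}|>s$, violating the $s$-almost condition at $U$. The paper treats the case $\tau_t(\mathcal{F})=k$ separately, and there it uses exactly your idea with the roles of the families swapped: $T_f$ itself lies in $\mathcal{G}$, which contradicts $T_g$ covering $\mathcal{G}$.

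Your route unifies the two cases and avoids the Gaussian-coefficient count for $\mathcal{W}$. It also shows the lemma holds under $n\ge 2k$ alone, with no dependence on $s$. The paper's detour through $\mathcal{W}$ gains nothing here, since the contradiction is already available at $U$.

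When you write it up, drop the abandoned attempts in Step 1, namely extending $T_f$ and the remark invoking $n\ge 2k+s-1$. The clean proof is:
\begin{enumerate}[\normalfont(1)]
\item Choose $F_0=T_g\oplus U$ as in your last paragraph.
\item Note $F_0\notin\mathcal{F}$, because $\dim(F_0\cap T_f)<t$ while $T_f$ is a $t$-cover of $\mathcal{F}$.
\item Apply Step 2 to contradict maximality.
\end{enumerate}
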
	
	\begin{proof}
		It is sufficient to show $\dim(T_{f}\cap T_{g})\ge t$ for any $T_{f}\in \mathcal{T}  (\mathcal{F} )$, $T_{g}\in \mathcal{T}  (\mathcal{G} )$. 
		
		If $\tau _{t} (\mathcal{F} )=k$ or $\tau _{t} (\mathcal{G} )=k$, we may without loss of generality assume the first holds. Then $T_{f}\in \mathcal{T}(\mathcal{G} ) \subseteq \mathcal{G} $ by the maximality of $\mathcal{G}$. Since $T_{g}$ is a $t$-cover of $\mathcal{G}$, we have $\dim(T_{f}\cap T_{g})\ge t$, as desired. In the following, assume that $\mathcal{T} (\mathcal{F} )\le k$ and $\mathcal{T} (\mathcal{G} )\le k$.
		
		Suppose for contradiction that $\dim(T_{f}\cap T_{g})\le t-1$. From Lemma \ref{jishu}, there exists $U\subseteq {V\brack k}$ such that $U\cap(T_{f}+T_{g})=T_{g}$. Set 
		$$\mathcal{W} =\left \{ W\in {V\brack k}:W\cap (U+T_{f})=T_{f} \right \}. $$
	 Based on
		$n-\dim(U+ T_{f})\ge k-\tau _{t}(\mathcal{F} )> 0$. Then by Lemma \ref{jishu}, we get
		\begin{equation*}
			\begin{aligned}
				 \left | \mathcal{W}   \right |= q^{\left ( \dim(U+T_{f} )-\tau _{t}(\mathcal{F} ) \right ) (k-\tau _{t}(\mathcal{F} ))}{n-\dim(U+T_{f} )\brack k-\tau _{t}(\mathcal{F} )} .
			\end{aligned}
		\end{equation*}
It follows from Lemma \ref{yinli6.1} that 
$$\left | \mathcal{W}  \right | \ge q^{(k-\tau _{t}(\mathcal{F} ) )^{2} } \cdot q^{(k-\tau _{t}(\mathcal{F} )(n-\dim(U+T_{f} ) -k+\tau _{t}(\mathcal{F})}) \ge q\cdot q^{s-1}>s .$$		
	
	For each $W\in \mathcal{W}$, we have 
	$$W\cap U=W\cap U\cap (U+T_{f})=T_{f}\cap U=(T_{f}+T_{g})\cap T_{f}\cap U=T_{f}\cap T_{g}.$$
	Which implies $\dim(W\cap U)\le t-1$. Since $\mathcal{F}, \mathcal{G}$ are $s$-almost cross-$t$-intersecting families, we know $U\notin \mathcal{F}$ or $\mathcal{W}\nsubseteq  \mathcal{G}$. Since $\mathcal{F}$, $\mathcal{G}$ are maximal and $T_{f}\subseteq W$, $T_{g}\subseteq U$, we know $U\in \mathcal{F}$, $\mathcal{W} \subseteq  \mathcal{G}$, a contradiction. Then the desired results hold.
		\end{proof}
	

	\begin{proof}[\bf Proof of Theorem \ref{dingli1}] Observe that $\mathcal{H}_{1}(V,E;k)$ and $\mathcal{H}_{1}(V,E;k)$ are $s$-almost cross-$t$-intersecting. Then $|\mathcal{F} | |\mathcal{G} |\ge {n-t\brack k-t} ^{2}  $. 
		
	It follows from Lemma \ref{yinli2.5} that $\left ( \tau _{t} (\mathcal{F} ),\tau _{t} (\mathcal{G} ) \right ) = (t,t)$. Pick $E_{f}\in  \tau _{t} (\mathcal{F} )$ and $E_{g}\in  \tau _{t} (\mathcal{G} )$. Then $\mathcal{F}\subseteq \mathcal{H}_{1}(V,E_{f};k)$ and $\mathcal{G}\subseteq \mathcal{H}_{1}(V,E_{g};k)$. Since $|\mf||\mg|$ takes the maximum value, we know that $\mf$ and $\mg$ are maximal. It follows from Lemma \ref{yinli2.6} that $E_{f}=E_{g}=:E$. Using the maximality of $\mf$ and $\mg$ again, we conclude $\mf=\mg=\mathcal{H}_{1}(V,E;k)$.
	\end{proof}
		

	\section{Proof of Theorem \ref{dingli5.2}}\label{section3}
	In this section, we first present some upper bounds for the product of cardinalities.

	\begin{lem}\label{yinli3.2}
		Let $n$, $k$, $t$ and $s$ be positive integers with $k\geq t+2$ and $n\geq  2k+3t+1+\log_{q}{13s}$. Suppose that $\mathcal{F}, \mathcal{G}\subseteq {V\brack k}$ are $s$-almost cross-$t$-intersecting families. If $(\tau_{t}(\mf),\tau_{t}(\mg))\notin \left \{ (t,t),(t,t+1),(t+1,t) \right \}$, then $|\mf||\mg| <g_{1}(n,k,t,s) $.
	\end{lem}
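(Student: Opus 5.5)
We split according to the $t$-covering numbers, as in the proof of Lemma \ref{yinli2.5}, and compare each resulting bound with $g_{1}(n,k,t,s)$ using the estimates of Section \ref{s4}. The key reference quantity is Lemma \ref{yinli6.3} (i), which gives
$$g_{1}(n,k,t,s)>\tfrac{25}{26}{k-t+1\brack 1}{n-t-1\brack k-t-1}{n-t\brack k-t}.$$
Note also that $f_{1}(n,k,t,s,t)={n-t\brack k-t}$ and
$$f_{1}(n,k,t,s,t+1)={t+1\brack t}{k-t+1\brack 1}{n-t-1\brack k-t-1}+s{t+1\brack 1}.$$

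\textbf{Case 1: $\tau_{t}(\mathcal{F})\le k$ and $\tau_{t}(\mathcal{G})\le k$.} By Lemma \ref{yinli2.3},
$$|\mathcal{F}||\mathcal{G}|\le f_{1}(n,k,t,s,\tau_{t}(\mathcal{F}))\,f_{1}(n,k,t,s,\tau_{t}(\mathcal{G})).$$
By Lemma \ref{yinli6.2} (i), $f_{1}$ is decreasing in $x$; this lemma applies with $l=1$, since our hypothesis on $n$ is stronger. Since the pair of covering numbers is not one of the three excluded pairs, either both covering numbers are at least $t+1$, or one equals $t$ and the other is at least $t+2$. So
$$|\mathcal{F}||\mathcal{G}|\le \max\left\{f_{1}(n,k,t,s,t+1)^{2},\ f_{1}(n,k,t,s,t)\,f_{1}(n,k,t,s,t+2)\right\}.$$

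For the second term, $f_{1}(n,k,t,s,t+2)$ is of order ${t+2\brack 2}{k-t+1\brack 1}^{2}{n-t-2\brack k-t-2}$ plus an $s$-term. Dividing by ${k-t+1\brack 1}{n-t-1\brack k-t-1}$ gives a ratio of order $q^{2t+(k-t)}/q^{n-k}$, which is tiny by the hypothesis on $n$. Hence this product is far below $\tfrac{25}{26}{k-t+1\brack 1}{n-t-1\brack k-t-1}{n-t\brack k-t}$.

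For the first term, compare $f_{1}(n,k,t,s,t+1)^{2}$ directly with the same lower bound. The ratio is roughly ${t+1\brack 1}^{2}{k-t+1\brack 1}{n-t-1\brack k-t-1}/{n-t\brack k-t}\approx q^{2t+k-t+1}/q^{n-k}$, again tiny. For both terms we use Lemma \ref{yinli6.1} (ii), (iii) and absorb the $s$-terms via $q^{n}\ge 13s\,q^{2k+3t+1}$.

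\textbf{Case 2: $\tau_{t}(\mathcal{F})\ge k+1$ or $\tau_{t}(\mathcal{G})\ge k+1$.} Without loss of generality $\tau_{t}(\mathcal{G})\ge k+1$. Lemma \ref{yinli2.4} gives
$$|\mathcal{F}||\mathcal{G}|\le s\binom{2k-2t+2}{k-t+1}\left({k\brack t}{n-t\brack k-t}+s\right).$$
We then show
$$s\binom{2k-2t+2}{k-t+1}{k\brack t}<c\,{k-t+1\brack 1}{n-t-1\brack k-t-1}$$
with $c$ small (say $c\le 1/2$), and bound the $s^{2}$ term similarly. The left side is roughly $s\,4^{k-t+1}q^{t(k-t)}$, while the right side is at least $q^{k-t}q^{(k-t-1)(n-k)}\ge q^{n-k}$ since $k\ge t+2$. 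With $n-k\ge k+3t+1+\log_{q}13s$ this is comfortable after crude estimates. Alternatively, one can mimic the proof of Lemma \ref{yinli6.2} (ii).

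\textbf{Main obstacle.} Most of the work is routine bookkeeping of the Gaussian-coefficient estimates. The step most likely to need care is Case 1 with $f_{1}(n,k,t,s,t+1)^{2}$, because the factor ${t+1\brack 1}^{2}$ must be beaten by $q^{n-2k}$. I would first try the crude bound ${t+1\brack 1}\le 2q^{t}$ from Lemma \ref{yinli6.1} (i), together with ${n-t-1\brack k-t-1}/{n-t\brack k-t}<q^{k-t}/q^{n-k}$, and check that the exponent margin $3t+1$ in the hypothesis on $n$ suffices.
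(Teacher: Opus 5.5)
Your Case~1 is exactly the paper's argument. It uses the same reduction via Lemma~\ref{yinli2.3} and Lemma~\ref{yinli6.2}(i) to $f_{1}(n,k,t,s,t+1)^{2}$ and $f_{1}(n,k,t,s,t)f_{1}(n,k,t,s,t+2)$, and the same comparison with Lemma~\ref{yinli6.3}(i); your order-of-magnitude estimates there are right.

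In Case~2 the paper does not compare directly with ${k-t+1\brack 1}{n-t-1\brack k-t-1}$. Instead:
\begin{itemize}
\item It applies Lemma~\ref{yinli6.2}(ii) with $l=\frac{13}{7}q^{t}$. Integrality of $l$ is not used in that proof, and for this $l$ the hypothesis $n\ge 2k+2t+1+\log_q 7sl$ is exactly $n\ge 2k+3t+1+\log_q 13s$.
\item It combines this with $f_{1}(n,k,t,s,t+1)\ge f_{1}(n,k,t,s,k-1)$ to get $s{k\brack t}\binom{2k-2t+2}{k-t+1}<\frac{6}{13q^{t}}f_{1}(n,k,t,s,t+1)$.
\item The bound ${t+1\brack 1}<2q^{t}$ then leaves a ratio of about $\frac{12}{13}<\frac{25}{26}$.
\end{itemize}
That route needs only one factor $q^{n-k}$. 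The reason is that $f_{1}(n,k,t,s,k-1)$ contains ${k-1\brack t}{k-t+1\brack 1}^{k-t-1}$, which absorbs ${k\brack t}\binom{2k-2t+2}{k-t+1}$.

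Your direct route also works, with much more room, but not with the reduction you actually wrote. Bounding the right side below by $q^{n-k}$ throws away too much. The left side $s\binom{2k-2t+2}{k-t+1}{k\brack t}\approx s\,4^{k-t+1}q^{t(k-t)}$ grows with $k$ faster than $13s\,q^{k+3t+1}$, and that is all the hypothesis guarantees for $q^{n-k}$. For instance, take $s=t=1$, $q=2$, $k=5$, $n=18$; this $n$ is admissible. Then $s\binom{10}{5}{5\brack 1}=7812>4096=\frac{1}{2}q^{n-k}$, so the claimed inequality with $c=\frac12$ does not follow from your bound.

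You must keep the full exponent $(k-t-1)(n-k)$. With $m=k-t\ge 2$ and $n-k\ge m+4t+1+\log_q 13s$,
\[
{k-t+1\brack 1}{n-t-1\brack k-t-1}>q^{m+(m-1)(n-k)}\ge 13s\,q^{m+(m-1)(m+4t+1)} .
\]
For $m\ge 2$ we have $m+(m-1)(m+4t+1)\ge tm+2m+2$. On the other side, $\binom{2m+2}{m+1}{k\brack t}\le \frac{7}{2}\cdot 4^{m+1}q^{tm}\le \frac72 q^{tm+2m+2}$ by Lemma~\ref{bu}. Together these give your inequality with $c=\frac12$, and the $s^{2}$ term then follows as you indicate. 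With that correction, or by simply invoking Lemma~\ref{yinli6.2}(ii) as the paper does, your proposal is complete.
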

	\begin{proof}
		By Lemma \ref{yinli6.3} (i), it is sufficient to show
		$$\frac{|\mf||\mg|}{{k-t+1\brack 1}{n-t-1\brack k-t-1}{n-t\brack k-t}} <\frac{25}{26} .$$
			\medskip
		\noindent \textbf{Case 1.} $\tau_{t}(\mathcal{F})\leq k$ and $\tau_{t}(\mathcal{G})\leq k$. 
		\medskip
	
		If $\tau_{t}(\mathcal{F})\geq t+1$ and $\tau_{t}(\mathcal{G})\geq t+1$, then by Lemma \ref{yinli2.3} and Lemma \ref{yinli6.2}(i), we have
		$$|\mf||\mg|\le f_{1}(n,k,t,s,t+1) ^{2} =\left ( {t+1\brack 1}{k-t+1\brack 1}{n-t-1\brack k-t-1}+s{t+1\brack 1} \right ) ^{2}.$$
		
	By Lemma \ref{yinli6.1} (ii)(iii) and $n\geq 2k+2t+1+\log_{q} 7s$, we have
				\begin{equation*}
			\begin{aligned}
				\frac{|\mf||\mg|}{{k-t+1\brack 1}{n-t-1\brack k-t-1}{n-t\brack k-t}} \le &\ {t+1\brack 1}^{2} \left ( \frac{{k-t+1\brack 1}{n-t-1\brack k-t-1}}{{n-t\brack k-t}} +\frac{2s}{{n-t\brack k-t}}+\frac{s^{2}}{{k-t+1\brack 1}{n-t-1\brack k-t-1}{n-t\brack k-t}} \right ) \\
			\le &\ q^{2t+2}\left ( \frac{q^{k-t+1} }{n-k}+\frac{2s}{q^{(k-t)(n-k)} } +\frac{s^{2} }{q^{k-t}\cdot q^{(k-t-1)(n-k)} \cdot q^{(k-t)(n-k)}}      \right )  \\
			\le	&\ \frac{1}{q^{\log_{q}{13s}+2t-2}  } +\frac{2s}{q^{2\log_{q}{13s} } }+\frac{s^{2} }{q^{2 }q^{\log_{q}{13s} }q^{2\log_{q}{13s} }} \\
			 \le &\ \frac{1}{13s}+\frac{2s}{(13s)^{2}} +\frac{s^{2}}{4(13s)(13s)^{2}} =\frac{729}{8788}<\frac{25}{26}.
			\end{aligned}
		\end{equation*}
		
			If $\tau_{t}(\mathcal{F})= t$ or $\tau_{t}(\mathcal{G})= t$, then by $(\tau_{t}(\mf),\tau_{t}(\mg))\notin \left \{ (t,t),(t,t+1),(t+1,t) \right \}$, we know $\tau_{t}(\mathcal{F})\ge t+2$ or $\tau_{t}(\mathcal{G})\geq t+2$. It follows from Lemma \ref{yinli2.3} and Lemma \ref{yinli6.2}(i) that
					\begin{equation*}
		\begin{aligned}
			|\mf||\mg|\le &\ f_{1}(n,k,t,s,t) f_{1}(n,k,t,s,t+2)\\
			=&\ {n-t\brack k-t}\left ( {{t+2\brack 2}{k-t+1\brack 1}^{2}{n-t-2\brack k-t-2}+s{t+2\brack t}}\left ( 1+{k-t+1\brack 1} \right ) \right )\\
			=&\ {n-t\brack k-t}{t+2\brack 2} \left ( {k-t+1\brack 1}^{2}{n-t-2\brack k-t-2}+s+s{k-t+1\brack 1} \right ) .
		\end{aligned}
	\end{equation*}		
		
Then we have
		\begin{equation*}
			\begin{aligned}
				\frac{|\mf||\mg|}{{k-t+1\brack 1}{n-t-1\brack k-t-1}{n-t\brack k-t}} \le &\ {t+2\brack 2} \left ( \frac{{k-t+1\brack 1}{n-t-2\brack k-t-2}}{{n-t-1\brack k-t-1}} +\frac{s}{{k-t+1\brack 1}{n-t-1\brack k-t-1}}+\frac{s}{{n-t-1\brack k-t-1}} \right ) \\
			<	&\ \frac{q^{2(t+1)} \cdot q^{k-t+1}}{q^{n-k}} +\frac{s\cdot q^{2(t+1)}}{q^{k-t}\cdot q ^{(k-t-1)(n-k)}}+\frac{sq^{2(t+1)}}{q ^{(k-t-1)(n-k)}} \\
			<	&\ \frac{1}{q^{\log_{q}{13s+2t-2} } } +\frac{s}{q^{2}\cdot q^{\log_{q}{13s} } } + \frac{s}{ q^{\log_{q}{13s} } }<\frac{1}{13s} +\frac{s}{4\cdot13s} +\frac{s}{13s} \\
			 < &\ \frac{1}{13} +\frac{1}{4\cdot13}+\frac{1}{13} =\frac{9}{52}<\frac{25}{26} .
			\end{aligned}
		\end{equation*}
		The required result follows.
		
		\medskip
		\noindent \textbf{Case 2.} $\tau_{t}(\mathcal{F})\ge k+1$ or $\tau_{t}(\mathcal{G})\ge k+1$. 
		\medskip
		
		W.l.o.g., we may assume that $\tau_{t}(\mathcal{G})\ge k+1$. From Lemma \ref{yinli2.4}, we obtain
			\begin{equation*}
			\begin{aligned}
			|\mf||\mg|\leq s{k\brack t}{n-t\brack k-t}\binom{2k-2t+2}{k-t+1} +s^{2} \binom{2k-2t+2}{k-t+1}.
			\end{aligned}
		\end{equation*}
	It follows from Lemma \ref{yinli6.2}(i) and (ii) that	
					\begin{equation*}
			\begin{aligned}
				|\mf||\mg|\leq &\ \frac{6}{7\cdot \frac{13}{7}q^{t}  } f_{1}(n,k,t,s,t+1){n-t\brack k-t}+\frac{6s}{7\cdot \frac{13}{7}q^{t} } f_{1}(n,k,t,s,t+1) \\
				=	&\	 \frac{6}{13q^{t} } f_{1}(n,k,t,s,t+1)\left ( {n-t\brack k-t}+s \right ) \\
				=	&\ \frac{6}{13q^{t} } \left ( {t+1\brack 1}{k-t+1\brack 1}{n-t-1\brack k-t-1}+s{t+1\brack 1} \right ) \left ( {n-t\brack k-t}+s \right ) .
			\end{aligned}
		\end{equation*}
	Then we have
		\begin{equation*}
	\begin{aligned}
		\frac{|\mf||\mg|}{{k-t+1\brack 1}{n-t-1\brack k-t-1}{n-t\brack k-t}} \le  &\ \frac{6}{13q^{t} } \left ( {t+1\brack 1}+\frac{s{t+1\brack 1}}{{k-t+1\brack 1}{n-t-1\brack k-t-1}}  \right ) \left ( 1+\frac{s}{{n-t\brack k-t}}  \right ) \\
		<	&\ \frac{6}{13q^{t} } \left ( 2q^{t}+\frac{2sq^{t}}{q^{(k-t)}\cdot q^{(k-t-1)(n-k)}}   \right ) \left ( 1+\frac{s}{q^{(k-t)(n-k)}}  \right )  \\
		<	&\ \left ( \frac{12}{13} +\frac{12s}{13q^{k-t}q^{(k-t-1)(n-k)}}  \right ) \left ( 1+\frac{s}{q^{2\log_{q}{13s} } }  \right )\\
		 < &\ \left ( \frac{12}{13} +\frac{12s}{13q^{2}q^{\log_{q}{13s} }}  \right ) \left ( 1+\frac{s}{ (13s)^{2}}  \right )\\
		< &\ \left ( \frac{12}{13} +\frac{3}{169}  \right ) \left ( 1+\frac{1}{ 169} \right ) <\frac{25}{26} .
	\end{aligned}
\end{equation*}
		This completes the proof.
		\end{proof}
		
	\begin{thm}\label{dingli2}
	Let $n$, $k$,  $t$ and $s$ be positive integers with $k\ge t+2$, and  $n\geq 3k+3t+1+\log_{q} 13s$. Suppose that $\mathcal{F},\mathcal{G}\subseteq  \qbinom{V}{k}$ are $s$-almost cross-$t$-intersecting which are not cross-$t$-intersecting. If $|\mathcal{F} ||\mathcal{G} |$ is maximum and $|\mathcal{F} |\le |\mathcal{G} |$, then there exist $X\subseteq  \qbinom{V}{k+1}$ and $E\subseteq  \qbinom{X}{t}$such that 
	
	$$\mf=\mh_{1}(V,E;k)\setminus \mathcal{A} ~~ and ~~ \mathcal{G}  =\mathcal{H} _{1} (V,E;k)\cup \mathcal{B}, $$
	where	$\ma$ is an $(q^{(k-t)(k-t+1)} \qbinom{n-k-1}{k-t}-s)$-subset of $\mathcal{H} _{2} (X,E;k)$ and $\mb$ is a $\min\left\{s, q^{(k-t+1)} \qbinom{t}{1}\right\}$-subset of ${X\brack k}\setminus\mh_{1}(X,E;k)$.
\end{thm}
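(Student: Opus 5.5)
Since the families in Example \ref{lizi} are $s$-almost cross-$t$-intersecting and not cross-$t$-intersecting, maximality gives $|\mf||\mg|\ge g_1(n,k,t,s)$. By Lemma \ref{yinli3.2}, $(\tau_t(\mf),\tau_t(\mg))\in\{(t,t),(t,t+1),(t+1,t)\}$. The proof then has three stages: exclude $(t,t)$, settle the orientation using $|\mf|\le|\mg|$, and finally pin down the structure in the case $(t,t+1)$. Throughout I would use that $\mf,\mg$ are maximal: no $k$-space can be added to either family without breaking the $s$-almost condition.

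\textbf{Excluding $(t,t)$.} Suppose $\mf\subseteq\mh_1(V,E_f;k)$ and $\mg\subseteq\mh_1(V,E_g;k)$. If $E_f=E_g$, the families are cross-$t$-intersecting, which is excluded. So assume $E_f\neq E_g$. Members of $\mf$ through $E_f$ that avoid meeting $E_g$ in dimension $t$ must each be $t$-disjoint from few members of $\mg$. A counting argument in the spirit of Lemma \ref{yinli2.6} (via Lemma \ref{jishu}) shows that one family lies, up to at most $s$ exceptions, inside $\{F:\dim(F\cap(E_f+E_g))\ge t+1\}$. This gives a bound of type $({k-t+1\brack 1}{n-t-1\brack k-t-1}+s)^2$ or $f_2\cdot(\cdots)$, and Lemma \ref{yinli6.3}(ii) / Lemma \ref{yinli6.4}(ii) shows it is smaller than $g_1$.

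\textbf{Orientation.} In the remaining case one family has $\tau_t=t$, so it lies in $\mh_1(V,E;k)$ and has size at most ${n-t\brack k-t}$. The other has $\tau_t=t+1$, and I expect to show it has size exceeding ${n-t\brack k-t}$. Together with $|\mf|\le|\mg|$, this forces $\tau_t(\mf)=t$, i.e.\ $\mf\subseteq\mh_1(V,E;k)$, and $\tau_t(\mg)=t+1$.

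\textbf{Structure for $(t,t+1)$.} Set $\mg_0=\mg\cap\mh_1(V,E;k)$ and $\mb=\mg\setminus\mh_1(V,E;k)$; the latter is nonempty since $\tau_t(\mg)=t+1$. Each $B\in\mb$ satisfies $\dim(B\cap E)<t$, and I would argue $\dim(B\cap E)=t-1$ to maximise. Each $F\in\mf$ with $\dim(F\cap B)<t$ is one of at most $s$ exceptions for $B$. Conversely, $F\in\mh_1(V,E;k)$ is $t$-disjoint from $B$ unless $\dim(F\cap(B+E))\ge t+1$. Counting the members of $\mh_1(V,E;k)$ that are $t$-intersecting with a given $B$, where $X=B+E$ is a $(k+1)$-space, yields exactly ${n-t\brack k-t}-q^{(k-t)(k-t+1)}{n-k-1\brack k-t}$ via (\ref{6.6}). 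Hence $|\mf|$ is at most that quantity plus $s$, with equality only if the removed set $\ma$ is $\mh_2(X,E;k)$ minus $s$ elements.

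\textbf{Main obstacle.} The hardest step is handling two or more elements of $\mb$ with different spans $B+E$. In that case $\mf$ must $t$-intersect, up to exceptions, with several $(k+1)$-spaces simultaneously, and $|\mf|$ drops to roughly $f_2(n,k,t,s,\cdot)$. Lemma \ref{yinli6.4}(ii) (together with the monotonicity of $f_2$ and $f_3$) should show the product is then below $g_1$, so all of $\mb$ lies in a single $X$. Given that, $\mb\subseteq{X\brack k}\setminus\mh_1(X,E;k)$ holds, where the latter set has size $q^{k-t+1}{t\brack 1}$. Each $F\in\mh_2(X,E;k)\cap\mf$ is $t$-disjoint from every $B\in\mb$, which forces $|\mb|\le s$. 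So $|\mb|=\min\{s,q^{k-t+1}{t\brack 1}\}$, and maximality gives $\mg_0=\mh_1(V,E;k)$. Finally, Lemma \ref{6.5}(i) is what guarantees that taking $|\mb|$ as large as possible and keeping exactly $s$ members of $\mh_2$ beats any alternative trade-off, completing the characterization.
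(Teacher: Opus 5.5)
Your outline has the same shape as the paper's proof: reduce with Lemma \ref{yinli3.2}, fix the orientation, show that all members of $\mg\setminus\mh_1(V,E;k)$ span one $(k+1)$-space together with $E$, then count. However, the step you call the main obstacle is missing its key ingredient. When two spans $E+B\neq E+C$ differ, the estimate $|\mf|\le f_2(n,k,t,s,k)$ is only useful together with $|\mg|\le f_3(n,k,t,s,t+1)={n-t\brack k-t}+q^{2}{k-t\brack 1}{t\brack 1}{n-t-1\brack k-t-1}+s$, the second factor in Lemma \ref{yinli6.4}(ii). The bound your sketch actually supplies, $|\mg|\le{t+1\brack 1}{n-t\brack k-t}$ (Lemma \ref{yinli2.2}), is too weak, since $f_2(n,k,t,s,k){t+1\brack 1}{n-t\brack k-t}$ exceeds $g_1(n,k,t,s)$.

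The paper gets the sharp bound in Claim \ref{yinli3.4}. Put $X=\sum_{T\in\mathcal{T}(\mg)}T$ and take $A\in\mf$ that is $t$-disjoint from some member of $\mg$; then $A\cap X=E$ by Claim \ref{yinli3.3}(iii). Each $G\in\mg\setminus(\mg_E\cup\md_{\mg}(A;t))$ then contains a $\dim(X)$-subspace $H\subseteq A+X$ with $\dim(H\cap X)=\dim(X)-1$ and $E\not\subseteq H$, and there are only $q^{\dim(X)-t+1}{k-t\brack 1}{t\brack 1}$ such $H$.

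Likewise, ``$\dim(B\cap E)=t-1$ to maximise'' is not an argument, yet everything after it depends on it, namely that $E+B$ is a $(k+1)$-space containing every minimum $t$-cover of $\mg$. The equality is in fact forced. Let $T$ be a $(t+1)$-dimensional $t$-cover of $\mg$. Every $k$-space through $T$ can be added to $\mf$ without creating a $t$-disjoint pair, so maximality gives $\mh_1(V,T;k)\subseteq\mf\subseteq\mh_1(V,E;k)$. Hence $E\subseteq T$ and $\dim(B\cap E)\ge\dim(B\cap T)-1\ge t-1$. This is what Lemma \ref{yinli2.6} and Claim \ref{yinli3.3} provide.

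The same observation (a $k$-space containing a $t$-cover of one family can always be added to the other) settles your other loose ends. In the case $(t,t)$ it gives $\mh_1(V,E_g;k)\subseteq\mf\subseteq\mh_1(V,E_f;k)$. So $E_f=E_g$, and the families would be cross-$t$-intersecting. The counting argument you sketch there is unnecessary, and as described it is not shown to fall below $g_1$.

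For the orientation, your expectation is correct, and it gives a shorter argument than the paper's. The paper bounds the family with $\tau_t=t$ by ${k-t+1\brack 1}{n-t-1\brack k-t-1}+s$ via Lemma \ref{yinli2.2} and then applies Lemma \ref{yinli6.3}(ii). Instead, the family with $\tau_t=t+1$ contains $\mh_1(V,E;k)$ by maximality and is not contained in it, so it has more than ${n-t\brack k-t}$ members.

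Finally, Lemma \ref{6.5}(i) plays no role here. Once all spans coincide, $|\mf|\le{n-t\brack k-t}-q^{(k-t)(k-t+1)}{n-k-1\brack k-t}+s$ and $|\mg|\le{n-t\brack k-t}+\min\{s,q^{k-t+1}{t\brack 1}\}$ hold independently. Their product is exactly $g_1(n,k,t,s)$, and equality in both gives the stated structure.
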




		\begin{proof}[\bf Proof of Theorem \ref{dingli2}]	

		Let $n$, $k$, $t$ and $s$ be positive integers with $k\geq t+1$ and $n\geq 2k$. Suppose that $\mf,\mg \subseteq {V\b k}$ are maximal $s$-almost cross-$t$-intersecting families, but they are not cross-$t$-intersecting. Set $E= {\textstyle \sum_{T_{f}\in \mathcal{T} (\mathcal{F} )  }} T_{f} $ and $X= {\textstyle \sum_{T_{g}\in \mathcal{T} (\mathcal{G} )  }} T_{g} $. If $\left ( \tau _{t}\left ( \mathcal{F}  \right ), \tau _{t}\left ( \mathcal{G}  \right )  \right ) =(t,t+1) $, then the following claims hold.
	\begin{cl}\label{yinli3.3}
		
		\begin{enumerate}[\normalfont(i)]
			\item  $E\in {X\brack t}$. \label{yinli3.3(i)}
			\item For each $G\in \mathcal{G} \setminus \mathcal{G} _{E} $, we have $X\subseteq E+G\in {V\brack k+1}$.	\label{yinli3.3(ii)}
			\item For each $H\in \mathcal{H }_{1} \left ( V,E;k \right ) $ with $\mathcal{D}_{G} \left ( H,t \right ) \ne \emptyset  $, we have $H\cap X=E$.	\label{yinli3.3(iii)}
		\end{enumerate}
		
		 	\end{cl}
		 		\begin{proof}
		 	(i) For each $F\in \mf$, by $\tau_{t}(\mf)=t$, we obtain $T_{f}\subseteq F$ and then $E\subseteq F$. From Lemma \ref{yinli2.6}, we know that $\mathcal{T} (\mf)$ and $\mathcal{T} (\mg)$ are cross-$t$-intersecting. It follows from $\tau_{t}(\mf)=t$ that $E\subseteq T_{g}$ for any $T_{g}\in \mg$.
		 		
		 	Let $F\in \mf$ with $\mathcal{D}_{G} \left ( F,t \right ) \ne \emptyset $ and $T_{g}\in \tau_{t}(\mg)$. We have $E \subseteq F, E\subseteq  T_{g}$, then $E\subseteq  F\cap T_{g}$. It follows that $t\le \dim(E)\le \dim (F\cap T_{g})\le t$, which implies $\dim (E)=\dim (F\cap T_{g})=t$. Then $E=F\cap T_{g}\in {T_{g}\brack t}\subseteq {X\brack t} $.

(ii)	From  (\ref{yinli3.3(i)}), we know $\dim(E)=t$. For any $G \in \mathcal{G} \setminus \mathcal{G}_{E}$ and $T_{g}\in \mathcal{T} (\mg)$, since $E\subseteq T_{g}$ and $\dim(T_{g}\cap G )\ge t$, we have
			\begin{equation*}
				\begin{aligned}
					\dim(E\cap G)  = \dim(E\cap T_{g}  \cap G)=\dim(E)+\dim(T_{g}\cap G) -\dim\left \{ E +(T_{g}\cap G)\right \} \ge t-1,
				\end{aligned}
			\end{equation*}
			\begin{equation*}
				\begin{aligned}
					\dim( T_{g}  \cap (E+G))=&\  \dim(T_{g})+\dim(E+G)-\dim(T_{g}+G) \\
					\ge &\ t+1+k+1-\dim(T_{g}+G)\ge t+1,
				\end{aligned}
			\end{equation*}
			implying that $T_{g}\subseteq E+G$. Hence $X\subseteq E+G\in {V\brack k+1}$. 

(iii)	Let $H \in \mathcal{H}_{1} (V,E;k)$ with $\mathcal{D}_{G} \left ( H,t \right ) \ne \emptyset$ and $G\in \mathcal{D} _{\mathcal{G} }(H;t) $. Then $G\in \mathcal{G} \setminus \mathcal{G} _{E}$. By (\ref{yinli3.3(ii)}), we have $X\subseteq E+G\in {V\brack k+1}$, which implies $H\cap X  \subseteq  H\cap (E+G)$ and then $\dim\left ( H\cap X \right ) \le \dim\left ( H\cap (E+G) \right ) $. Then we have
\begin{equation*}
	\begin{aligned}
		\dim(H\cap X)-1\le &\ \dim ( H\cap(E+G) )  +\dim(G)-\dim(E+G)\\
		\le &\ \dim ( H\cap G\cap(E+G) )\\
		=&\ \dim(H\cap G) \le t-1,
	\end{aligned}
\end{equation*}
which implies that $\dim(H\cap X)\le t$. This together with $E\subseteq H\cap X$ and $\dim(E)=t$ yields the desired results.
	\end{proof}
	\begin{cl}\label{yinli3.4}
			Let $n$, $k$, $t$ and $s$ be positive integers with $k\geq t+2$ and $n\geq  2k+3t+1+\log_{q}{13s}$. Suppose that $\mathcal{F}, \mathcal{G}\subseteq {V\brack k}$ are maximal $s$-almost cross- $t$-intersecting families, but they are not cross-$t$-intersecting. If $(\tau_{t}(\mf),\tau_{t}(\mg))=(t,t+1)$, then $\left | \mathcal{G}  \right | \le {n-t\brack k-t}+q^{\dim(X)-t+1} {k-t\brack 1}{t\brack 1}{n-\dim(X)\brack k-\dim(X)}+s $, where $X=\sum_{T_{g}\in \mathcal{T} (\mathcal{G} )  }T_{g}$.
	\end{cl}
	\begin{proof}
	Let $A\in \mf$ and $B\in \mg$ with $\dim(A\cap B)<t$. Then $B\in \mathcal{G} \setminus \mathcal{G}_{E} $ due to $\mathcal{D} _{\mathcal{F} } (B;t)\ne \emptyset $, and $E\in {X\brack t}$ due to Claim \ref{yinli3.3}(\ref{yinli3.3(i)}).
	
	If $\mathcal{G}= \mathcal{G}_{E} \cup \mathcal{D} _{\mathcal{G}} (A;t)$, then $\left | \mathcal{G} \right | \le {n-t\brack k-t}+s$, the required result holds. Next we assume that $\mathcal{G}\setminus \left ( \mathcal{G}_{E} \cup \mathcal{D} _{\mathcal{G}} (A;t) \right ) \ne \emptyset $.
	
	For each $G\in \mathcal{G}\setminus \left ( \mathcal{G}_{E} \cup \mathcal{D} _{\mathcal{G}} (A;t) \right ) $, by $E\subseteq X$ and Claim \ref{yinli3.3}(\ref{yinli3.3(ii)}), we get $E+G\subseteq X+G\subseteq E+G$ and $\dim(E+G)=k+1$, implying that 
	$$ \dim(X+G)=k+1= \dim(X)+\dim(G)-\dim(X\cap G)=\dim(X)+k-\dim(X\cap G),$$ then $\dim(X\cap G)=\dim(X)-1$ and $\dim(G\cap E)=t-1$. It follows from Claim \ref{yinli3.3}(\ref{yinli3.3(iii)})  that 
		\begin{equation*}
			\begin{aligned}
k\ge \dim ( G\cap (A+X) )\ge &\ \dim (  (G\cap A)+(G\cap X) )\\                                          =&\ \dim(G\cap A)+\dim(G\cap X)-\dim\left ( (G\cap A)\cap (G\cap X)  \right )  \\                 =&\ \dim(G\cap A)
+\dim(G\cap X)-\dim(G\cap E) \\               \ge &\ t+\dim(X)-1-t+1=\dim(X),
			\end{aligned}
		\end{equation*}
		which implies $\dim(X)\le k$. Therefore, we have 
		$$\mathcal{G} \setminus \left ( \mathcal{G} _{E} \cup \mathcal{D} _{\mathcal{G} }(A;t)  \right ) \subseteq \bigcup_{H\subseteq {A+X\brack \dim(X)},~ \dim(H\cap X)=\dim(X)-1,~ E\nsubseteq H}\mathcal{G}_{H}  .$$
		Note that
		$$\left | \left \{ H\subseteq {A+X\brack \dim(X)}: \dim(H\cap X)=\dim(X)-1,E\nsubseteq H \right \}  \right | =q^{\dim(X)-t+1}{k-t \brack 1}{t\brack 1} .$$
		We further conclude $\left | \mathcal{G} \setminus \left ( \mathcal{G} _{E} \cup \mathcal{D} _{\mathcal{G} }(A;t)  \right ) \right | \le q^{\dim(X)-t+1}{k-t \brack 1}{t\brack 1}{n-\dim(X)\brack k-\dim(X)}$. This yields
		$$\left | \mathcal{G}  \right | \le q^{\dim(X)-t+1}{k-t \brack 1}{t\brack 1}{n-\dim(X)\brack k-\dim(X)}+{n-t\brack k-t}+s$$
		due to $\mathcal{G}=\left ( \mathcal{G} \setminus \left ( \mathcal{G} _{E} \cup \mathcal{D} _{\mathcal{G} }(A;t)  \right ) \right )\cup\mathcal{G} _{E}\cup \mathcal{D} _{\mathcal{G} }(A;t) $.
	\end{proof}

 Since $|\mathcal{F} | |\mathcal{G} |$ takes the maximum value, by Example \ref{lizi}, we know 
					\begin{align}\label{(3.4)}
				|\mf||\mg|\ge g_{1}(n,k,t,s).
			\end{align}
		Recall that $\left ( \tau _{t}(\mathcal{F} )  ,\tau _{t}(\mathcal{G} )  \right )\in \left \{ (t,t+1), (t+1,t)\right \}  $.
		
			\begin{cl}\label{duanyan3.5}
			$\left ( \tau _{t}(\mathcal{F} )  ,\tau _{t}(\mathcal{G} )  \right )=(t,t+1)$.
		\end{cl}
			\noindent \textit{Proof.} 
			Suppose for condition that $\left ( \tau _{t}(\mathcal{F} )  ,\tau _{t}(\mathcal{G} )  \right )=(t+1,t)$. By Lemma \ref{yinli2.2}, we know $|\mg|\le {k-t+1\brack 1} {n-t-1\brack k-t-1} +s$. This together with $|\mf|\le |\mg|$ and Lemma \ref{yinli6.3} (ii) yields
			$$\left | \mathcal{F}  \right |  \left | \mathcal{G}  \right | \le \left ( {k-t+1\brack 1} {n-t-1\brack k-t-1} +s \right ) ^{2}<g_{1}(n,k,t,s), $$
			a contradiction to (\ref{(3.4)}). Therefore, we have $\left ( \tau _{t}(\mathcal{F} )  ,\tau _{t}(\mathcal{G} )  \right )=(t,t+1)$.
			$\hfill\square$


	\begin{cl}\label{duanyan3.6}
$X=E+G\in {V\brack k+1} $ for any $G\in \mathcal{G} \setminus \mathcal{G}_{E} $.
	\end{cl}
	\noindent \textit{Proof.} 
From Claim \ref{yinli3.3}(\ref{yinli3.3(ii)}) , we obtain $X\subseteq E+G\in {V\brack k+1} $ for any $G\in \mathcal{G} \setminus \mathcal{G}_{E} $.

We first show that $E+B=E+G$ for any  $G\in \mathcal{G} \setminus \mathcal{G}_{E} $. Suppose for contradiction that there exist $C\in G\in \mathcal{G} \setminus \mathcal{G}_{E}$ such that $E+B\ne E+C$. Let $M=(E+B)\cap (E+C)$. By $X\subseteq M \subsetneq E+B\in {V\brack k+1}$, we know $t+1\le \dim(M)\le k$. Set 
$$\mathcal{H} =\left \{ (H_{1} ,H_{2} )\in {E+B\brack t+1}\times {E+C\brack t+1}:~E\subseteq H_{1} \nsubseteq M,~E\subseteq H_{2} \nsubseteq M \right \} .$$
For each $F\in \mathcal{F} \setminus \left ( \mathcal{D} _{\mathcal{F} } (B;t)\cup \mathcal{D} _{\mathcal{F} } (C;t) \right )$, since $E\subseteq F$, $\dim(E\cap B)=t-1$, $\dim(E\cap C)=t-1$ and $\dim(F+E+B)=\dim(F+B)= \dim(F)+\dim(B)-\dim(F\cap B)\le 2k-t$, we have 

\begin{equation*}
	\begin{aligned}
		\dim(F\cap (E+B))= &\dim(F)+\dim (E+B)-\dim(F+E+B)\geq t+1
	\end{aligned}
\end{equation*} 
and $\dim(F\cap (E+C))\ge t+1$ similarly. Therefore we get
$$\mathcal{F} \subseteq \left \{ F\in {V\brack k}:E\subseteq F,\dim(F\cap M)\ge t+1 \right \} \cup \left \{ \bigcup_{(H_{1},H_{2} )\in \mathcal{H} }\mathcal{F}_{H_{1}+H_{2}} \right \} \cup \mathcal{D} _{\mathcal{F} }(B;t)\cup \mathcal{D} _{\mathcal{F} }(C;t) .$$
Note that $\dim(H_{1}+ H_{2}) = \dim(H_{1}) +\dim( H_{2}) -\dim(H_{1}\cap H_{2}) =2t+2-t=t+2$ for any $(H_{1},H_{2} )\in \mathcal{H}$. Then
\begin{equation*}
	\begin{aligned}
		|\mf|\le &\ {\dim(M)-t\brack 1} {n-t-1\brack k-t-1}+q^{2(\dim(M)-t)} {k+1-\dim(M)\brack 1}^{2} {n-t-2\brack k-t-2}+2s\\
		=&\ f_{2}(n,k,t,s,\dim(M)) .
	\end{aligned}
\end{equation*} 
This together with Lemma \ref{yinli6.4} (i) and $\dim(M)\le k$ yields $\left | \mathcal{F}  \right | \le f_{2} (n,k,t,s,k)$. From Lemma \ref{yinli3.4}, \ref{yinli6.5} and $\dim(X)\ge t+1$, we obtain
$$\left | \mathcal{F}  \right | \left | \mathcal{G}  \right |\le f_{2} (n,k,t,s,k)\left ( {n-t\brack k-t}+q^{2}{k-t \brack 1}{t\brack 1}{n-t-1\brack k-t-1}+s  \right ) .$$
It follows from Lemma \ref{yinli6.4} (ii) that $\left | \mathcal{F}  \right | \left | \mathcal{G}  \right |< g_{1}(n,k,t,s) $, a contradiction to (\ref{(3.4)}). Hence $E+B=E+G$ for any $G\in \mathcal{G} \setminus \mathcal{G}_{E}$.

Let $G\in \mg$ and $T\in {E+B\brack t+1}$ with $E\in T$. If $G\in \mathcal{G}_{E}$, then $\dim(T\cap G)\ge t$. If $G\in \mathcal{G} \setminus \mathcal{G}_{E}$, then by $\dim(E+G)=k+1$ and $E+G=E+B$, we have 
\begin{equation*}
	\begin{aligned}
	\dim(T\cap G)=&\ \dim(T)+\dim(G)-\dim(T+ G)\\
	             =&\ \dim(T)+\dim(G)-\dim(E+ G) =t.
	\end{aligned}
\end{equation*} 
Therefore, we have $T\in \mathcal{T} (\mathcal{G} )$. We further conclude $\left \{ T\in {E+B\brack t+1}:E\subseteq T \right \} \subseteq \mathcal{T}  (\mathcal{G} ) $, which implies $E+B\subseteq X$. This together with $X\subseteq E+B\in {V\brack k+1} $ yields $X=E+B=E+G$ for any $G\in \mathcal{G} \setminus \mathcal{G}_{E}$.
$\hfill\square$

	Since $\tau _{t} (\mathcal{F} )=t$, we have $\mf \subseteq \mathcal{H} _{1} (V,W;k) $. By Claim \ref{duanyan3.6}, we know $X=E+B\in {V\brack k+1}$, which implies $\mathcal{D} _{\mathcal{H}_{1}(V,E;k)  } (B;t)=\left \{ F\in {V\brack k}:F\cap X=E \right \} =\mathcal{H}_{2}(X,E;k)$. It follows from $n>2k+3t+1+\log_{q}{13s}$ and $k\ge t+2$ that
	\begin{equation*}
		\begin{aligned}
			\left | \mathcal{D} _{\mathcal{H}_{1}(V,E;k)  } (B;t) \right | = &\ q^{(k-t+1)(k-t)}{n-k-1\brack k-t}\ge q^{6}q^{(k-t)(n-2k+t-1)}\\
			\ge &\ q^{6}(13s)^{2}>s .
		\end{aligned}
	\end{equation*}  
Hence $\mf \subsetneq  \mathcal{H}_{1}(V,E;k) $. To determine $\mf$, it is sufficient to describe $ \mathcal{H}_{1}(V,E;k)\setminus \mathcal{F}$.

Pick $H \in \mathcal{H}_{1}(V,E;k)\setminus \mathcal{F}$. By the maximality of $\mf$ and $\mg$, we know $\mathcal{D}_{\mathcal{G} }(H;t) \ne \emptyset $. From Claim \ref{yinli3.3(iii)} , we obtain $H\in  \mathcal{H}_{2}(X,E;k) $, which implies 
\begin{align}\label{(3.5)}
\mathcal{H}_{1}(V,E;k)\setminus \mathcal{F}\subseteq  \mathcal{H}_{2}(X,E;k) .
\end{align}
It follows that 
$$\mathcal{H}_{2}(X,E;k)\setminus \mathcal{F}\subseteq  \mathcal{H}_{1}(V,E;k)\setminus \mathcal{F}\subseteq \mathcal{H}_{2}(X,E;k)\setminus \mathcal{F} . $$
Therefore, we have $\mathcal{H}_{1}(V,E;k)\setminus \mathcal{F}\subseteq \mathcal{H}_{2}(X,E;k)\setminus \mathcal{F}$. This combining with $\mathcal{D}_{\mathcal{F} } (B;t)= \mathcal{H}_{2}(X,E;k)\cap  \mathcal{F}$ yields
\begin{align}\label{(3.6)}
\left | \mathcal{H}_{1}(V,E;k)\setminus \mathcal{F}  \right | =\left | \mathcal{H}_{2}(X,E;k) \right |-\left | \mathcal{H}_{2}(X,E;k)\cap \mathcal{F}  \right |\ge q^{(k-t+1)(k-t)} {n-k-1\brack k-t}-s.
\end{align}

Choose $G\in \mathcal{G}\setminus \mathcal{G}_{E}$. By Claim \ref{duanyan3.6}, we have $G\subseteq E+G=X $. Hence 
\begin{align}\label{(3.7)}
	\mathcal{G} \setminus \mathcal{G}_{E}\subseteq {X\brack k}\setminus \mathcal{H}_{1} (X,E;k).
\end{align}
	From Claim \ref{yinli3.3}(\ref{yinli3.3(iii)}) , we know $A\cap X=E$, which implies $\dim(A\cap G)=\dim(A\cap X \cap G)=\dim(E\cap G)<t$. It follows that
	\begin{align}\label{(3.8)}
		\left | \mathcal{G}\setminus \mathcal{G}_{E} \right |\le \min \left \{ \left | {X\brack k}\setminus  \mathcal{H}_{1}(X,E;k) \right |,\left | \mathcal{D} _{\mathcal{G} }(A;t) \right |   \right \}\le \min \left \{ q^{k+1-t}{t\brack 1},s  \right \}   .
	\end{align}
	Now, according to (\ref{(3.4)}), (\ref{(3.6)}) and (\ref{(3.8)}), we have 
		\begin{equation*}
		\begin{aligned}
			g_{1}(n,k,t,s)\ge &\ \left ( \left | \mathcal{H}_{1} (V,E;k) \right |- \left | \mathcal{H}_{1} (V,E;k)\setminus \mathcal{F}  \right |     \right )\left ( \left | \mathcal{G}_{E}  \right |+\left | \mathcal{G} \setminus \mathcal{G}_{E}  \right |    \right ) =|\mf||\mg| \ge  g_{1}(n,k,t,s).
		\end{aligned}
	\end{equation*}  
	This together with (\ref{(3.5)}) and (\ref{(3.7)}) implies $\mathcal{H} _{1}(V,E;k)\setminus \mathcal{F} $ is a $(q^{(k-t)(k-t+1)} \qbinom{n-k-1}{k-t}-s)$-subset of $\mathcal{H} _{2} (X,E;k)$, $\mathcal{G} _{E}= \mathcal{H} _{1} (V,E;k)$ and $\mathcal{G}\setminus \mathcal{G}_{E}$ is a $\min\left\{s, q^{(k-t+1)} \qbinom{t}{1}\right\}$-subset of ${X\brack k}\setminus\mh_{1}(X,E;k)$. Then the desired result holds.
		\end{proof}
Now we are ready to prove Theorem \ref{dingli5.2}.	
		\begin{proof}
		Since $|\mathcal{F} ||\mathcal{G} |$ take the maximum value, by (\ref{lizi}) and (\ref{buchonglizi}), we have
		\begin{equation}\label{5.2}
			|\mathcal{F} ||\mathcal{G} |\ge \max\left \{  g_{1} (n,k,t,s),g_{2} (n,k,t) \right \} .
		\end{equation}
		We divide our proof into the following two cases.
		
		\medskip
		\noindent{{\bf Case 1.} $k\le 2t$.}
		\medskip
		
		By (\ref{5.2}) and Lemma \ref{6.5} (ii), we have $|\mathcal{F} ||\mathcal{G} |\ge g_{3} (n,k,t) >  g_{1} (n,k,t,s)$. It follows from Theorem \ref{dingli2} that $\mathcal{F} $ and $\mathcal{G} $ are cross-$t$-intersecting.
		
		Since $|\mathcal{F} ||\mathcal{G} |$ take the maximum value, by \cite[Theorem 1.2]{交叉t相交向量4} , there exists $L\subseteq  \qbinom{V}{t+1}$ such that $\mathcal{F} =\mathcal{H}_{1} (V,L;k)$ and $\mathcal{G}  =\mathcal{M}(L;k,t) $, or $\mathcal{F} =\mathcal{M} (L;k,t)$ and $\mathcal{G}  = \mathcal{H}_{1} (V,L;k)$. This together with $|\mathcal{F} |\le |\mathcal{G} |$ yields $\mathcal{F} =\mathcal{M} (L;k,t)$ and $\mathcal{G}  = \mathcal{H}_{1} (V,L;k)$. Then (i) holds.
		
		\medskip
		\noindent{{\bf Case 2.} $k\ge 2t+1$.}
		\medskip
		
		By (\ref{5.2}) and Lemma \ref{6.5}(i), we have 
		$$|\mathcal{F} ||\mathcal{G} |\ge g_{1} (n,k,t,s)>\left ( {n-t\brack k-t}-q^{(k-t)(k+1-t)}{n-k-1\brack k-t}  \right ) \left ( {n-t\brack k-t}+ q^{(k-t+1)} \qbinom{t}{1} \right ). $$
		According to \cite[Theorem 1.2]{交叉t相交向量4}, we know that $\mathcal{F}$ and $\mathcal{G}$ are not cross-$t$-intersecting.
		
		Since $|\mathcal{F} ||\mathcal{G} |$ taking maximum value and $|\mathcal{F} |\le |\mathcal{G} |$, by Theorem \ref{dingli2}, we conclude that (ii) follows.
	\end{proof}

	\medskip
	\noindent{\bf Acknowledgment.}	
	T. Yao is supported by Natural Science Foundation of Henan (252300420899).

\end{document}